\newtheorem{thm}{Theorem}[section]
\newtheorem{lemma}[thm]{Lemma}
\newtheorem{prop}[thm]{Proposition}
\newtheorem{cor}[thm]{Corollary}
{\theorembodyfont{\rmfamily}

\newtheorem{examp}[thm]{Example}
\newtheorem{examps}[thm]{Examples}
\newtheorem{rmk}[thm]{Remark}
}
\newcommand{\qed}{\hfill \mbox{\raggedright \rule{.07in}{.1in}}}
\newenvironment{proof}{\vspace{1ex}\noindent{\bf
Proof}\hspace{0.5em}}{\hfill\qed\vspace{1ex}}
\newenvironment{pfof}[1]{\vspace{1ex}\noindent{\bf Proof of
#1}\hspace{0.5em}}{\hfill\qed\vspace{1ex}}
\newcommand{\R}{\mathbb{R}}
\newcommand{\Z}{\mathbb{Z}}
\newcommand{\N}{\mathbb{N}}
 \newcommand{\E}{\mathbb{E}}
 \newcommand{\eps}{\epsilon}
 \renewcommand{\varkappa}{g}
\newcommand{\x}{x_{\epsilon}}
\newcommand{\y}{y_{\epsilon}}
\newcommand{\z}{z_{\epsilon}}
\newcommand{\hatx}{\hat x_{\epsilon}}
\newcommand{\tildez}{\tilde z_{\epsilon}}
\newcommand{\Lip}{{\operatorname{Lip}}}
\newcommand{\dist}{\operatorname{dist}}
\newcommand{\diam}{\operatorname{diam}}
\newcommand{\supp}{\operatorname{supp}}
\newcommand{\tildeJ}{{\tilde J}}
\numberwithin{equation}{section}
\newcommand{\cG}{{\mathcal{G}}}
\newcommand{\cM}{{\mathcal{M}}}
\newcommand{\cW}{{\mathcal{W}}}
\newcommand{\cS}{{\mathcal{S}}}
\newcommand{\SMALL}{\textstyle}
\newcommand{\BIG}{\displaystyle}
\begin{document}

\title{Martingale-coboundary decomposition for  \\ families of dynamical systems}

\author{A. Korepanov \hspace{2em}  Z. Kosloff  \hspace{2em} I. Melbourne \\[.75ex]
 {\small Mathematics Institute, University of Warwick, Coventry, CV4 7AL, UK}}

\date{4 August 2016; updated 23 August 2017.}

 \maketitle

\begin{abstract}
We prove statistical limit laws for sequences of Birkhoff sums of the type 
$\sum_{j=0}^{n-1}v_n\circ T_n^j$ where $T_n$ is a family of 
nonuniformly hyperbolic transformations.

The key ingredient is a new martingale-coboundary decomposition for
nonuniformly hyperbolic transformations which is useful already in the case when the family $T_n$ is replaced by a fixed transformation $T$, and which is particularly effective in
the case when $T_n$ varies with $n$.

 In addition to uniformly expanding/hyperbolic dynamical systems, our results include cases where 
 the family $T_n$ consists of intermittent maps, unimodal maps 
(along the Collet-Eckmann parameters), Viana maps, and externally forced dispersing billiards.

As an application, we prove a homogenization result for discrete fast-slow systems where the fast dynamics is generated by a family of nonuniformly hyperbolic transformations.
\end{abstract}

\section{Introduction}

The emergence of statistical and stochastic phenomena in deterministic dynamical systems is currently a very active area.  Topics of sustained interest include central limit theorems, invariance principles (weak and almost sure convergence to Brownian motion), moment estimates, and homogenization (whereby deterministic systems with multiple timescales converge to a stochastic differential equation).

One of the standard techniques for investigating such phenomena is the martingale-coboundary decomposition method of Gordin~\cite{Gordin69} which has seen extensive development in both the probability theory literature (for example~\cite{HallHeyde80, KV, MaxwellWoodroofe00, PhilippStout75}) and in the dynamical systems literature (for example~\cite{Liverani96,Tyran-Kaminska05,Viana97}).

In this paper, we introduce a new version of the Gordin method and show that it has significant advantages over previous versions when applied to a wide range of questions in nonuniformly hyperbolic dynamics.
Even in the case of a single nonuniformly hyperbolic transformation, there are
are advantages to the new approach
which seems more elementary and more powerful than the existing ones in the literature.
In addition, our method is well suited for studying sequences of Birkhoff sums of the form $S_n=\sum_{j=0}^{n-1}v_n\circ T_n^j$ where $T_n^{j+1}=T_n^j\circ T_n$
which arise naturally in averaging and homogenization problems. Here,
$T_n:\Lambda_n\to\Lambda_n$, $n\ge0$ is a sequence of measure-preserving transformations defined on probability spaces $(\Lambda_n,\mu_n)$.
It is assumed that the transformations $T_n$ are nonuniformly expanding/hyperbolic with uniform constants, but no restrictions are imposed on how $T_n$ varies with $n$.


In the case of a single nonuniformly hyperbolic map $T$, our method
applies directly to $T$ bypassing any induced limit theorems for the associated induced uniformly hyperbolic map.
Unlike other approaches~\cite{KV,Liverani96,MaxwellWoodroofe00,Tyran-Kaminska05}, no 
approximation arguments are required for the central limit theorem (CLT) and weak invariance principle (WIP) when the inducing time is not $L^3$.  For moment estimates, the method does not require special arguments when the inducing time is not $L^2$ (cf.~\cite{DedeckerMerlevede15,GouezelM14}).
In addition, we obtain a simple proof of an unexpected CLT
for systems with nonsummable decay of correlations due to~\cite{Gouezel04a},
whereas the previous proof relied on operator renewal theory and the Wiener lemma in noncommutative Banach algebras.

Still in the case of a single map $T$, our method is very well-adapted for obtaining a secondary martingale-coboundary decomposition for the square of the martingale in the decomposition mentioned above.
This enables control on sums of squares as is often required in more sophisticated limit laws.  To illustrate this, we consider an almost sure invariance principle with excellent error rates due to~\cite{CunyMerlevede15}, and show that our method of applying their results leads to stronger conclusions in certain examples.

The main advantage of the approach, however, is that it 
allows explicit control on various constants associated with each transformation $T$, making the method especially useful for sums of the form
$\sum_{j=0}^{n-1}v_n\circ T_n^j$.
This in turn has applications to fast-slow systems 
of the type considered in~\cite{KKMsub}.  Whereas~\cite{KKMsub} obtained rates of averaging, we prove results here on homogenization.

The remainder of this paper is organised as follows.
In Section~\ref{sec-NUE}, we establish the new martingale-coboundary decomposition for nonuniformly expanding maps and show how this implies moment estimates and the WIP.
In Section~\ref{sec-secondary}, we obtain a secondary martingale-coboundary decomposition and apply this to the almost sure invariance principle.
In Section~\ref{sec-NUElimit}, we derive limit laws for families of nonuniformly expanding maps.
This is extended to families of nonuniformly hyperbolic transformations in
Section~\ref{sec-NUHlimit}.
In Section~\ref{sec-homog}, we state and prove an abstract theorem on homogenization 
for discrete time fast-slow systems, generalising~\cite{GM13b}.
In Section~\ref{sec-homog-uniform}, we verify the hypotheses in Section~\ref{sec-homog} when the fast dynamics is given by a family of nonuniformly hyperbolic transformations.

\vspace{-2ex}
\paragraph{Notation}  
We write $\to_{\mu_n}$ to denote weak convergence with respect to a specific family of probability measures $\mu_n$ on the left-hand-side.  So $A_n\to_{\mu_n}A$ means that $A_n$ is a family of random variables on $(\Lambda_n,\mu_n)$
and $A_n\to_w A$.

For $J\in \R^{m\times n}$, we write
$|J|=\big(\sum_{i=1}^m\sum_{j=1}^n J_{ij}^2\big)^{1/2}$.

We use ``big O'' and $\ll$ notation interchangeably, writing $a_n=O(b_n)$ or $a_n\ll b_n$
if there is a constant $C>0$ such that
$a_n\le Cb_n$ for all $n\ge1$.
As usual, $a_n=o(b_n)$ means that $\lim_{n\to\infty}a_n/b_n=0$.

Recall that $v:\Lambda\to\R$ is a H\"older observable on a metric space $(\Lambda,d)$ if 
  $\|v\|_{\eta} = |v|_\infty + |v|_\eta<\infty$ where $|v|_\infty=\sup_\Lambda|v|$,
$|v|_\eta= \sup_{x\neq y} \frac{|v(x)-v(y)|}{d(x,y)^\eta}$.

\section{Martingale-coboundary decomposition for nonuniformly expanding maps}
\label{sec-NUE}

In this section, we prove our main theoretical result on martingale-coboundary decomposition for nonuniformly expanding maps.
The notion of nonuniformly expanding map is recalled in
Subsection~\ref{subsec-NUE}.   
The martingale-coboundary decomposition is carried out in Subsection~\ref{sec-primary}.  Subsection~\ref{sec-some} shows how certain limit laws 
follow from this decomposition.

\subsection{Nonuniformly expanding maps}
\label{subsec-NUE}

Let $(\Lambda,d_\Lambda)$ be a bounded metric space with
finite Borel measure $\rho$ and let $T:\Lambda\to \Lambda$ be a nonsingular
transformation. 
Let $Y\subset \Lambda$ be a subset of positive measure, and 
let $\alpha$ be an at most countable measurable partition
of $Y$ with $\rho(a)>0$ for all $a\in\alpha$.    We suppose that there is an
integrable 
{\em return time} function $\tau:Y\to\Z^+$, constant on each $a$ with
value $\tau(a)\ge1$, and constants $\lambda>1$, $\eta\in(0,1]$, $C_0,C_1\ge1$
such that for each $a\in\alpha$,
\begin{itemize}
\item[(1)] $F=T^\tau$ restricts to a (measure-theoretic) bijection from $a$  onto $Y$.
\item[(2)] $d_\Lambda(Fx,Fy)\ge \lambda d_\Lambda(x,y)$ for all $x,y\in a$.
\item[(3)] $d_\Lambda(T^\ell x,T^\ell y)\le C_0d_\Lambda(Fx,Fy)$ for all $x,y\in a$,
$0\le \ell <\tau(a)$.
\item[(4)] $\zeta_0=\frac{d\rho|_Y}{d\rho|_Y\circ F}$
satisfies $|\log \zeta_0(x)-\log \zeta_0(y)|\le C_1d_\Lambda(Fx,Fy)^\eta$ for all
\mbox{$x,y\in a$}.
\end{itemize}
Such a dynamical system $T:\Lambda\to \Lambda$ is called {\em nonuniformly expanding}.
We refer to $F=T^\tau:Y\to Y$ as the {\em induced map}.
(It is not required that $\tau$ is the first return time to $Y$.)
There is a unique absolutely continuous $F$-invariant probability measure $\mu_Y$
on $Y$ and $d\mu_Y/d\rho\in L^\infty$.  

Define the Young tower~\cite{Young99},
$\Delta=\{(y,\ell)\in Y\times\Z:0\le\ell\le \tau(y)-1\}$,
and the tower map $f:\Delta\to\Delta$  where
$f(y,\ell)=\begin{cases} (y,\ell+1), & \ell\le \tau(y)-2
\\ (Fy,0), & \ell=\tau(y)-1 \end{cases}$.
The projection
$\pi_\Delta:\Delta\to \Lambda$, $\pi_\Delta(y,\ell)=T^\ell y$, defines a semiconjugacy
from $f$ to $T$.  Define the ergodic $f$-invariant
probability measure
$\mu_\Delta=\mu_Y\times\{{\rm counting}\}/\int_Y \tau\,d\mu_Y$ on $\Delta$.
Then
$\mu=(\pi_\Delta)_*\mu_\Delta$ is an absolutely continuous ergodic  $T$-invariant probability measure.

\begin{rmk}  The above definition of nonuniformly expanding map covers many important classes of examples such as those mentioned in this paper.  
Indeed, it is generally true that nonuniform expansivity plus the existence of good statistical properties actually implies the existence of an inducing scheme satisfying the conditions above, see~\cite{AFLV11,AlvesLuzzattoPinheiro05}.  See~\cite{AlvesPinheiro10} for related results in the invertible setting (Section~\ref{sec-NUHlimit}).
\end{rmk}

In this section, we work with a fixed 
nonuniformly expanding map $T:\Lambda\to \Lambda$,
induced map $F=T^\tau:Y\to Y$, where $\tau\in L^p(Y)$ for some $p\ge1$,
and Young tower map $f:\Delta\to\Delta$.
The corresponding ergodic invariant probability measures are denoted $\mu$, $\mu_Y$ and $\mu_\Delta$.
Throughout, \(|\;|_p\) denotes the norm in
\(L^p(\mu)\) for functions on \(\Lambda\), in
\(L^p(\mu_Y)\) for functions on \(Y\), and in
\(L^p(\mu_{\Delta})\) for functions on \(\Delta\).
Also, $\|\;\|_\eta$ denotes the H\"older norm on $\Lambda$ and $Y$.

Although the map $T$ is fixed, the dependence of various constants on $T$ is important in later sections.
To simplify the statement of results in this section, we denote by \(C\) various constants 
depending continuously on $\diam\Lambda$, $C_0$, $C_1$, $\lambda$, $\eta$, $p$ and 
$|\tau|_p$.

\vspace{1ex}

Let $L:L^1(\Delta)\to L^1(\Delta)$ and $P:L^1(Y)\to L^1(Y)$ denote the transfer 
operators corresponding to $f:\Delta\to\Delta$ and $F:Y\to Y$.
(So $\int_\Delta Lv\,w\,d\mu_\Delta=\int_\Delta v\,w\circ f\,d\mu_\Delta$ for $v\in L^1(\Delta)$, $w\in L^\infty(\Delta)$, and 
$\int_Y Pv\,w\,d\mu_Y=\int_Y v\,w\circ F\,d\mu_Y$ for $v\in L^1(Y)$, $w\in L^\infty(Y)$.)

  Let \(\zeta = \frac{d \mu_Y}{d\mu_Y \circ F}\).
Given $y\in Y$, let
$y_a$ denote the unique $y_a\in a$ with $Fy_a=y$.
Then we have the pointwise expressions for $P$ and $L$,
\begin{align} \label{eq-PL}
(P\psi)(y)=\sum_{a\in\alpha} \zeta(y_a)\psi(y_a),
\quad
    (L \psi)(y, \ell) = 
    \begin{cases}
      \sum_{a \in \alpha} \zeta(y_a) \psi(y_a, \tau(y_a)-1), & \ell=0 \\
      \psi(y, \ell-1), & \ell \ge 1
    \end{cases}.
\end{align}

\begin{prop} \label{prop-zeta}
    $\zeta(x) \le C \mu_Y(a)$ and
    $| \zeta(x) - \zeta(y)|
    \le C \mu_Y(a) d_\Lambda(Fx,Fy)^\eta$
for all $x,y\in a$, $a\in\alpha$.
\end{prop}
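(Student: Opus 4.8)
The plan is to express $\zeta=\frac{d\mu_Y}{d\mu_Y\circ F}$ in terms of the reference Jacobian $\zeta_0=\frac{d\rho|_Y}{d\rho|_Y\circ F}$, whose regularity is provided by hypothesis~(4), and the invariant density $h=\frac{d\mu_Y}{d\rho|_Y}$. Since $d\mu_Y=h\,d\rho$ on $Y$, for $x\in a$ one has $\zeta(x)=\frac{h(x)}{h(Fx)}\,\zeta_0(x)$, so everything reduces to knowing that $h$ is well behaved. The input I would use is the standard fact from Young tower / Gibbs--Markov theory that $h$ is bounded above and below, $0<\inf_Y h\le\sup_Y h<\infty$, and that $\log h$ is $\eta$-H\"older on $Y$, with all constants depending continuously only on $\diam\Lambda,C_0,C_1,\lambda,\eta$ (hence among the constants denoted $C$). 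This follows from the usual cone or spectral-gap argument for the reference transfer operator $P_\rho$, using that~(4) upgrades to a distortion bound for all iterates of $F$ that is uniform because $\sum_{j\ge0}\lambda^{-j\eta}<\infty$. This density regularity is the only substantial ingredient, and hence the step I expect to be the main obstacle (or at least the one requiring a willingness to quote background).

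Granting it, a log-distortion bound for $\zeta$ is immediate: for $x,y\in a$,
\[
\log\zeta(x)-\log\zeta(y)=\bigl(\log h(x)-\log h(y)\bigr)-\bigl(\log h(Fx)-\log h(Fy)\bigr)+\bigl(\log\zeta_0(x)-\log\zeta_0(y)\bigr),
\]
and I would bound the last bracket by $C_1 d_\Lambda(Fx,Fy)^\eta$ by~(4), the middle one by $C d_\Lambda(Fx,Fy)^\eta$ by H\"older continuity of $\log h$, and the first by $C d_\Lambda(x,y)^\eta\le C\lambda^{-\eta}d_\Lambda(Fx,Fy)^\eta$ by H\"older continuity of $\log h$ together with the expansion estimate~(2). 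Hence $\bigl|\log\zeta(x)-\log\zeta(y)\bigr|\le C d_\Lambda(Fx,Fy)^\eta$ on each $a\in\alpha$, and in particular, since $d_\Lambda\le\diam\Lambda$, the crude bounded-distortion statement $\zeta(x)\le C\zeta(z)$ for all $x,z\in a$.

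For the sup bound, I would read off from~\eqref{eq-PL} that $P\mathbf{1}_a=(\zeta\circ(F|_a)^{-1})\mathbf{1}_Y$, so by $F$-invariance of $\mu_Y$, $\int_Y\zeta(y_a)\,d\mu_Y(y)=\int_Y P\mathbf{1}_a\,d\mu_Y=\mu_Y(a)$; combining this with bounded distortion and $\mu_Y(Y)=1$ gives $\zeta(x)=\int_Y\zeta(x)\,d\mu_Y(y)\le C\int_Y\zeta(y_a)\,d\mu_Y(y)=C\mu_Y(a)$, the first assertion. For the second, for $x,y\in a$ I would write
\[
|\zeta(x)-\zeta(y)|=\zeta(y)\,\bigl|e^{\log\zeta(x)-\log\zeta(y)}-1\bigr|\le C\,\zeta(y)\,\bigl|\log\zeta(x)-\log\zeta(y)\bigr|\le C\,\mu_Y(a)\,d_\Lambda(Fx,Fy)^\eta,
\]
using $|e^t-1|\le e^{|t|}|t|$ with $|t|\le C(\diam\Lambda)^\eta$ bounded, then the log-distortion bound, then the sup bound just proved. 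Apart from the density regularity, this is all routine bookkeeping.
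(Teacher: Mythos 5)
Your proof is correct and follows essentially the same route as the paper: a log-H\"older distortion bound for $\zeta$ on each branch, the identity $\int_Y P1_a\,d\mu_Y=\mu_Y(a)$ to convert bounded distortion into $\sup_a\zeta\le C\mu_Y(a)$, and an elementary inequality ($|s-t|\le\max\{s,t\}\,|\log s-\log t|$ in the paper, $|e^t-1|\le e^{|t|}|t|$ in your write-up) to pass from the log-estimate to the Lipschitz estimate on $\zeta$. The only difference is one of packaging: the paper quotes $|\log\zeta(x)-\log\zeta(y)|\le C\,d_\Lambda(Fx,Fy)^\eta$ directly from \cite[Propositions~2.3 and~2.5]{KKMnote}, whereas you rederive it from the factorisation $\zeta=(h/h\circ F)\,\zeta_0$ together with the quoted boundedness and H\"older regularity of the invariant density $h$, which is precisely the content supplied by that reference.
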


\begin{proof}  
  By \cite[Propositions~2.3 and 2.5]{KKMnote},
  \(
    \big| \log \zeta(x) - \log \zeta(y) \big|
    \ll d_\Lambda(Fx,Fy)^\eta.
  \)
  In particular, \(\zeta(x)/ \zeta(y) \ll 1\).
  Hence
\[
\SMALL \mu_Y(a)=\int_Y 1_a \, d\mu_Y=\int_Y P 1_a \, d\mu_Y\ge\inf P1_a = \inf_a \zeta \gg \sup_a \zeta,
\]
and so $\zeta(y)\ll \mu_Y(a)$.

Next, we note the inequality $|s-t|\le\max\{s,t\}|\log s-\log t|$ which is valid for all $s,t>0$.
Hence
$|\zeta(x)-\zeta(y)|\ll \sup_a\zeta d_\Lambda(Fx,Fy)^\eta\ll \mu_Y(a) d_\Lambda(Fx,Fy)^\eta$.
\end{proof}

\subsection{The primary martingale-coboundary decomposition}
\label{sec-primary}

Let \(v :\Lambda \to \R^d\) be H\"older with $\int_\Lambda v\,d\mu=0$, and define \(\phi=v\circ\pi_\Delta:\Delta\to\R^d\).  
Define the 
\emph{induced observable} $\phi':Y\to\R^d$ by \(\phi'(y) = \sum_{\ell=0}^{\tau(y)-1} \phi(y,\ell)\).

\begin{prop}
  \label{prop-iLip}
  \(\|P \phi'\|_\eta \le C \|v\|_\eta \).
\end{prop}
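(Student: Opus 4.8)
The plan is to work directly with the pointwise formula $(P\phi')(y)=\sum_{a\in\alpha}\zeta(y_a)\phi'(y_a)$ coming from~\eqref{eq-PL}, where $\phi'(y_a)=\sum_{\ell=0}^{\tau(a)-1}v(T^\ell y_a)$, and to estimate separately the two pieces $|P\phi'|_\infty$ and $|P\phi'|_\eta$ of the H\"older norm. The recurring device is Proposition~\ref{prop-zeta}: the bound $\zeta(y_a)\le C\mu_Y(a)$ turns any sum $\sum_{a\in\alpha}\zeta(y_a)\tau(a)$ into $C\sum_{a\in\alpha}\mu_Y(a)\tau(a)=C\int_Y\tau\,d\mu_Y=C|\tau|_1$, which is finite (indeed controlled by $|\tau|_p$) even though $\tau$ need not be bounded. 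For the sup bound I estimate $|\phi'(y_a)|\le|v|_\infty\tau(a)$, so $|(P\phi')(y)|\le|v|_\infty\sum_{a}\zeta(y_a)\tau(a)\le C|v|_\infty|\tau|_1$; this also shows that the series defining $P\phi'$ converges absolutely, so the term-by-term manipulations below are legitimate.

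For the H\"older bound, fix $y,y'\in Y$ and compare term by term, writing $\zeta(y_a)\phi'(y_a)-\zeta(y'_a)\phi'(y'_a)=\zeta(y_a)\bigl(\phi'(y_a)-\phi'(y'_a)\bigr)+\bigl(\zeta(y_a)-\zeta(y'_a)\bigr)\phi'(y'_a)$. For the first summand, note $Fy_a=y$ and $Fy'_a=y'$, so property~(3) gives $d_\Lambda(T^\ell y_a,T^\ell y'_a)\le C_0\,d_\Lambda(y,y')$ for every $0\le\ell<\tau(a)$ --- crucially with a constant independent of $\ell$ --- hence $|\phi'(y_a)-\phi'(y'_a)|\le C_0^\eta|v|_\eta\,\tau(a)\,d_\Lambda(y,y')^\eta$; multiplying by $\zeta(y_a)\le C\mu_Y(a)$ and summing over $a$ produces $C|v|_\eta|\tau|_1\,d_\Lambda(y,y')^\eta$. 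For the second summand I use the Lipschitz-type estimate for $\zeta$ in Proposition~\ref{prop-zeta}, namely $|\zeta(y_a)-\zeta(y'_a)|\le C\mu_Y(a)\,d_\Lambda(y,y')^\eta$, together with $|\phi'(y'_a)|\le|v|_\infty\tau(a)$, and sum as before to get $C|v|_\infty|\tau|_1\,d_\Lambda(y,y')^\eta$. Adding the two contributions yields $|P\phi'|_\eta\le C\|v\|_\eta$, and combining with the sup bound gives $\|P\phi'\|_\eta\le C\|v\|_\eta$.

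There is no serious obstacle here; the only points needing care are the absolute convergence just mentioned and the observation that property~(3) supplies an $\ell$-uniform distortion bound. The latter is precisely what lets the factor $\tau(a)$ appear linearly and then be absorbed, via $\zeta(y_a)\le C\mu_Y(a)$, into $\int_Y\tau\,d\mu_Y$. This $L^1$-integrability of $\tau$ --- as opposed to needing $\tau$ bounded --- is the whole reason the estimate closes, and is the feature emphasised in the introduction.
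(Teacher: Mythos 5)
Your proof is correct and follows essentially the same route as the paper: the same splitting of $\zeta(y_a)\phi'(y_a)-\zeta(y'_a)\phi'(y'_a)$ into a $\zeta$-difference term and a $\phi'$-difference term, with Proposition~\ref{prop-zeta} and property~(3) supplying the bounds, and $\sum_a\mu_Y(a)\tau(a)=|\tau|_1$ absorbing the return-time factor. The extra remarks on absolute convergence are fine but add nothing beyond the paper's argument.
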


\begin{proof}
  Let \(x,y \in Y\), \(a \in \alpha\), with corresponding preimages $x_a,y_a\in a$.  Then
  \begin{align}
    \label{eq-sbyz}
    | \phi'(x_a) - \phi'(y_a)| 
    \leq \sum_{\ell=0}^{\tau(a)-1} \big|v(T^\ell x_a) - v(T^\ell y_a)\big|
    \ll|v|_\eta \tau(a) d_\Lambda(x,y)^\eta.
  \end{align}
Also $|\phi'|\le |v|_\infty\,\tau$.
By~\eqref{eq-PL} and Proposition~\ref{prop-zeta},
  \begin{align*}
    |(P \phi')(x) - (P \phi')(y)|
 & \le \sum_{a \in \alpha} 
      |\zeta(x_a) - \zeta(y_a)|\,|\phi'(x_a)|
 + \sum_{a \in \alpha} 
       \zeta(y_a)|\phi'(x_a)-\phi'(y_a)|
    \\ & \ll \|v\|_\eta \sum_{a \in \alpha} \mu_Y(a) \tau(a) d_\Lambda(x,y)^\eta
    \ll \|v\|_\eta d_\Lambda(x,y)^\eta.
  \end{align*}
Hence $|P\phi'|_\eta \ll \|v\|_\eta$.
Similarly, $|P\phi'|_\infty \ll |v|_\infty$.
\end{proof}

Define \(\chi', m' : Y \to \R^d\) 
as follows: 
\[ 
\SMALL \chi' = \sum_{k=1}^\infty P^k \phi', \qquad
\phi' = m' + \chi' \circ F - \chi'.
\]
By Proposition~\ref{prop-iLip}
and~\cite[Corollary~2.4 and Proposition~2.5]{KKMnote},
\begin{align*} 
\SMALL  \|\chi'\|_\eta & \le \sum_{k=0}^\infty\|P^kP\phi'\|_\eta\ll \|P\phi'\|_\eta
\ll \|v\|_\eta, \\
\nonumber   |m'|_p & \le |\phi'|_p+2|\chi'|_\infty
\le |v|_\infty |\tau|_p+2|\chi'|_\infty\ll \|v\|_\eta.
\end{align*}
Define \(\chi,m : \Delta \to \R^d\) by
\[
  \chi(y, \ell) = 
    \chi'(y)+ \sum_{k=0}^{\ell-1} \phi(y, k),
  \qquad
  m(y, \ell) = \begin{cases}
    0, & \ell \leq \tau(y)-2 \\
    m'(y), & \ell=\tau(y)-1
  \end{cases}.
\]

\begin{prop} \label{prop-mr}
  \(|m|_p \leq C\|v\|_\eta\) and
  \(|\chi|_{p-1} \leq C \|v\|_\eta\).
\end{prop}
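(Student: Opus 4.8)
The plan is to push both estimates down to the base $Y$ using the product structure of the tower measure. Write $\bar\tau=\int_Y\tau\,d\mu_Y$; since $\tau\ge1$ and $\mu_Y$ is a probability measure we have $\bar\tau\ge1$, and since $\mu_\Delta=(\mu_Y\times\mathrm{counting})/\bar\tau$, for any measurable $w:\Delta\to\R^d$ and any $q\ge1$,
\[
|w|_q^q=\bar\tau^{-1}\int_Y\sum_{\ell=0}^{\tau(y)-1}|w(y,\ell)|^q\,d\mu_Y\le\int_Y\sum_{\ell=0}^{\tau(y)-1}|w(y,\ell)|^q\,d\mu_Y .
\]
I would apply this with $w=m$, $q=p$, and with $w=\chi$, $q=p-1$.

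For the bound on $m$: by construction $m(y,\ell)=0$ unless $\ell=\tau(y)-1$, in which case it equals $m'(y)$, so the inner sum collapses to $|m'(y)|^p$ and hence $|m|_p\le|m'|_p$. The estimate $|m'|_p\ll\|v\|_\eta$ is precisely the one displayed just above the statement (it comes from $|m'|_p\le|v|_\infty|\tau|_p+2|\chi'|_\infty$ together with $\|\chi'\|_\eta\ll\|v\|_\eta$), so $|m|_p\le C\|v\|_\eta$.

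For the bound on $\chi$: the crux is the pointwise estimate $|\chi(y,\ell)|\le C\|v\|_\eta\,\tau(y)$ valid on all of $\Delta$. Indeed $|\phi(y,k)|=|v(T^ky)|\le|v|_\infty\le\|v\|_\eta$ and $|\chi'|_\infty\le\|\chi'\|_\eta\ll\|v\|_\eta$, so from $\chi(y,\ell)=\chi'(y)+\sum_{k=0}^{\ell-1}\phi(y,k)$ we get $|\chi(y,\ell)|\le|\chi'|_\infty+\ell\,|v|_\infty\le C\|v\|_\eta+(\tau(y)-1)\|v\|_\eta\le C\|v\|_\eta\,\tau(y)$, using $\tau(y)\ge1$. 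Feeding this into the displayed inequality with $q=p-1$,
\[
|\chi|_{p-1}^{p-1}\le\int_Y\tau(y)\big(C\|v\|_\eta\,\tau(y)\big)^{p-1}\,d\mu_Y=(C\|v\|_\eta)^{p-1}\!\int_Y\tau^p\,d\mu_Y=(C\|v\|_\eta)^{p-1}|\tau|_p^p ,
\]
whence $|\chi|_{p-1}\le C\|v\|_\eta\,|\tau|_p^{p/(p-1)}\le C'\|v\|_\eta$. (When $p=1$ the $\chi$-statement is read with $|\cdot|_0$ and is immediate.)

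I do not expect a genuine obstacle: the argument is essentially bookkeeping with the tower measure. The two places that need a touch of care are, first, that it is the drop in exponent from $p$ to $p-1$ that allows the fibrewise sum over $\ell$ to absorb the extra factor $\tau(y)$, so that one needs exactly the hypothesis $\tau\in L^p(\mu_Y)$ and nothing stronger; and second, that the factor $|\tau|_p^{p/(p-1)}$ emerging at the end is legitimately swallowed by the generic constant $C$, since the constants of this section are by convention allowed to depend continuously on $|\tau|_p$.
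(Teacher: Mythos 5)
Your proposal is correct and follows essentially the same route as the paper: the $m$-estimate collapses to $|m|_p\le|m'|_p\ll\|v\|_\eta$ via the tower measure, and the $\chi$-estimate comes from the pointwise bound $|\chi(y,\ell)|\ll\tau(y)\|v\|_\eta$ with the fibrewise sum over $\ell$ contributing the extra factor of $\tau$ that drops the exponent from $p$ to $p-1$ — exactly the computation the paper's ``similarly'' is alluding to.
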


\begin{proof}
Compute that 
$\int_\Delta |m|^p\,d\mu_\Delta 
= |\tau|_1^{-1}\int_Y \sum_{\ell=0}^{\tau(y)-1}|m(y,\ell)|^p\,d\mu_Y
= |\tau|_1^{-1}\int_Y |m'(y)|^p\,d\mu_Y 
\le  |m'|_p^p
\ll \|v\|_\eta^p$.
Similarly, $|\chi(y,\ell)|\le |\chi'|_\infty+\ell|v|_\infty\ll \tau(y)\|v\|_\eta$ yielding the estimate for $\chi$.
\end{proof}

\begin{prop} \label{prop-m}
  \(\phi = m + \chi \circ f - \chi\) and
  \(m \in \ker L\).
\end{prop}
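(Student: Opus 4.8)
The plan is to verify both assertions by direct computation from the definitions of $m$ and $\chi$ and the pointwise formula~\eqref{eq-PL} for $L$. The only input needed beyond bookkeeping is the identity $Pm'=0$ on $Y$, which I would record first. Applying $P$ to the defining relation $\phi'=m'+\chi'\circ F-\chi'$ and using $P(w\circ F)=w$ (valid since $\mu_Y$ is $F$-invariant) together with $P\chi'=\sum_{k\ge1}P^{k+1}\phi'=\chi'-P\phi'$ (the series converges by Proposition~\ref{prop-iLip}, exactly as already used to bound $\chi'$), one obtains
\[
Pm'=P\phi'-P(\chi'\circ F)+P\chi'=P\phi'-\chi'+(\chi'-P\phi')=0 .
\]

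For the cohomological identity $\phi=m+\chi\circ f-\chi$, I would split according to the level in the tower. If $\ell\le\tau(y)-2$, then $f(y,\ell)=(y,\ell+1)$ and $m(y,\ell)=0$, so $\chi(f(y,\ell))-\chi(y,\ell)=\sum_{k=0}^{\ell}\phi(y,k)-\sum_{k=0}^{\ell-1}\phi(y,k)=\phi(y,\ell)$, as desired. If $\ell=\tau(y)-1$, then $f(y,\ell)=(Fy,0)$ and $m(y,\ell)=m'(y)$, so
\[
\chi(f(y,\ell))-\chi(y,\ell)=\chi'(Fy)-\chi'(y)-\sum_{k=0}^{\tau(y)-2}\phi(y,k).
\]
Here $\chi'(Fy)-\chi'(y)=\phi'(y)-m'(y)$ by the defining relation for $m'$, and $\phi'(y)=\sum_{k=0}^{\tau(y)-2}\phi(y,k)+\phi(y,\tau(y)-1)$ by definition of $\phi'$; substituting gives $\chi(f(y,\ell))-\chi(y,\ell)=\phi(y,\tau(y)-1)-m'(y)$, whence $m(y,\ell)+\chi(f(y,\ell))-\chi(y,\ell)=\phi(y,\tau(y)-1)=\phi(y,\ell)$. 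This covers all of $\Delta$.

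For $m\in\ker L$, I would apply~\eqref{eq-PL} directly. On the levels $\ell\ge1$ we have $(Lm)(y,\ell)=m(y,\ell-1)$, and since $\ell-1\le\tau(y)-2$ this is $0$ by definition of $m$. On the base, $(Lm)(y,0)=\sum_{a\in\alpha}\zeta(y_a)m(y_a,\tau(y_a)-1)=\sum_{a\in\alpha}\zeta(y_a)m'(y_a)=(Pm')(y)=0$ by the first step. Hence $Lm=0$ almost everywhere.

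I do not expect a genuine obstacle here: once $Pm'=0$ is in hand the rest is routine verification, and that identity is just the telescoping argument underlying the Gordin decomposition on $Y$. The only point requiring care is the case $\ell=\tau(y)-1$ of the cohomological identity, where the ``jump'' contribution $\chi'(Fy)-\chi'(y)$ must be traded for $\phi'-m'$ and then reconciled with the partial Birkhoff sum $\sum_{k=0}^{\ell-1}\phi(y,k)$ built into the definition of $\chi$.
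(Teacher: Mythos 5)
Your proof is correct and follows essentially the same route as the paper: verify $Pm'=0$ by the telescoping identity $P\chi'=\chi'-P\phi'$ together with $P(\chi'\circ F)=\chi'$, check the cohomological identity level by level in the tower, and then read off $Lm=0$ from the pointwise formula~\eqref{eq-PL}. The only difference is that you spell out the computation of $Pm'$ in slightly more detail than the paper does.
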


\begin{proof}
If $\ell\le\tau(y)-2$, then 
\[
\chi\circ f(y,\ell)-\chi(y,\ell)=\chi(y,\ell+1)-\chi(y,\ell)=\phi(y,\ell)=\phi(y,\ell)-m(y,\ell).
\]
For $p=(y,\tau(y)-1)$,
\begin{align*}
\chi\circ f(p)-  \chi(p)
 & =\chi(Fy,0)-\chi(y,\tau(y)-1) 
=\chi'(Fy)-\chi'(y)-{\SMALL\sum_{k=0}^{\tau(y)-2}}\phi(y,k)
\\ & =\phi'(y)-m'(y)-{\SMALL\sum_{k=0}^{\tau(y)-2}}\phi(y,k)
=\phi(p)-m(p).
\end{align*}
Hence $\phi=m+\chi\circ f-\chi$.

  By definition,  $Pm' =P\phi'-\chi'+P\chi'\equiv 0$.
  Using~\eqref{eq-PL}, observe that \((Lm)(y,\ell) =m(y,\ell-1)= 0\) if \(\ell \ge 1\), and 
  \[
    \SMALL (Lm)(y,0) = \sum_{a \in \alpha} \zeta (y_a)m(y_a, \tau(y_a)-1) 
    = \sum_{a \in \alpha} \zeta (y_a)m'(y_a) 
    = (Pm')(y) = 0.
  \]
Hence $m\in\ker L$.
\end{proof}

\begin{prop} \label{prop-chiae}
$\max_{\,0\le k\le n}|\chi\circ f^k|=o(n^{1/p})$ a.e.
\end{prop}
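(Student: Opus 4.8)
The plan is to deduce the pointwise bound from the $L^p$-type control on $\chi$ established in Proposition~\ref{prop-mr} together with a Borel--Cantelli argument, exploiting that $f$ is measure-preserving. Recall that $\chi(y,\ell)$ is controlled by $\tau(y)\|v\|_\eta$, so that $|\chi|$ is dominated (up to the constant $\|v\|_\eta$) by the function $\tau\circ\pi_Y$, where $\pi_Y:\Delta\to Y$ is the natural projection $(y,\ell)\mapsto y$; more precisely $\chi\in L^{p-1}(\mu_\Delta)$, but in fact the relevant feature is that $|\chi|^p$ is integrable against a measure for which the base-point distribution has a density, so $\mu_\Delta(|\chi|>t)\ll t^{-p}$ once one accounts correctly for the $\tau$-weighting. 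The standard fact here is: if $g\in L^p$ on a probability space with an ergodic measure-preserving transformation $f$, then $g\circ f^n=o(n^{1/p})$ a.e., and hence $\max_{0\le k\le n}|g\circ f^k|=o(n^{1/p})$ a.e. as well.

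The key steps, in order, are as follows. First, I would note that $\mu_\Delta(|\chi|>n^{1/p})\ll n^{-1}$ is \emph{not} quite enough for Borel--Cantelli along all $n$, so instead I would use the classical lemma that for $g\in L^p(\mu_\Delta)$, $\sum_{n}\mu_\Delta(|g|>\epsilon n^{1/p})<\infty$ for every $\epsilon>0$ (this is just the integral test: $\sum_n \mu_\Delta(|g|^p>\epsilon^p n)\le \epsilon^{-p}\int_\Delta|g|^p\,d\mu_\Delta<\infty$). Here, however, $\chi$ need only lie in $L^{p-1}$, not $L^p$. To handle this, I would instead work with the dominating function: since $|\chi|\ll \tau\circ\pi_Y\,\|v\|_\eta$ pointwise on $\Delta$, and $\int_\Delta (\tau\circ\pi_Y)^p\,d\mu_\Delta=|\tau|_1^{-1}\int_Y \tau\cdot\tau^p\,d\mu_Y$ is finite precisely when $\tau\in L^{p+1}(\mu_Y)$ --- which is generally \emph{false} --- this direct approach also fails.

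The correct route, and the one I would take, is to apply the Borel--Cantelli lemma directly to the stationary sequence $\chi\circ f^k$ using only $\chi\in L^{p-1}$ is still insufficient; the right statement uses that $\chi\in L^{p-1+\delta}$ would suffice, but actually the sharp classical result (see e.g.\ the a.e.\ growth bounds for Birkhoff sums) is: for a measure-preserving system and $g\ge0$ with $\int g^r<\infty$, one has $g\circ f^n=o(n^{1/r})$ a.e. Since Proposition~\ref{prop-mr} gives $\chi\in L^{p-1}$, this yields $\chi\circ f^n=o(n^{1/(p-1)})$ a.e., which is \emph{weaker} than the claimed $o(n^{1/p})$. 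So the genuine content must come from a more refined estimate on the tail of $\chi$: namely that $\mu_\Delta(|\chi|>t)\ll t^{-p}$ even though $|\chi|$ grows like $\tau$ on the tower, because the fibre over large-$\tau$ points is proportionally thin relative to $\mu_\Delta$. Concretely, $\mu_\Delta(\tau\circ\pi_Y>N)=|\tau|_1^{-1}\sum_{\tau(a)>N}\tau(a)\mu_Y(a)$, and combined with $|\chi|\le |\chi'|_\infty+\ell|v|_\infty$ on the column over $a$ --- where $\ell$ ranges over $0,\dots,\tau(a)-1$ --- the portion of that column with $|\chi|>t$ has $\mu_\Delta$-measure at most $|\tau|_1^{-1}\mu_Y(a)(\tau(a)-t/\|v\|_\eta)_+$, and summing over $a$ with $\tau(a)\gg t$ gives a bound $\ll t^{-(p-1)}|\tau|_p^p$ from $\tau\in L^p$ --- yielding $\chi\circ f^n=o(n^{1/(p-1)})$, consistent with the first computation.

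The genuine improvement to exponent $1/p$ comes from a \emph{maximal} perspective rather than a pointwise one: I would bound $\max_{0\le k\le n}|\chi\circ f^k|$ directly. The quantity $\max_{0\le k\le n}|\chi\circ f^k(x)|$ along an orbit segment that, within a single sojourn in a column over $a\in\alpha$, attains its maximum roughly $|\chi'|_\infty+\tau(a)|v|_\infty$, and the number of distinct columns visited in $n$ steps is at most $n/\min\tau+1$ --- but more usefully, the orbit up to time $n$ sees base points $y,Fy,\dots,F^{R}y$ with $\sum_{j\le R}\tau(F^jy)\approx n$, so $\max_{0\le k\le n}|\chi\circ f^k|\ll \|v\|_\eta\,\max_{0\le j\le R}\tau(F^jy)$ where $R\le n$. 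By the standard fact $\tau\circ F^j=o(j^{1/p})$ a.e.\ for $\tau\in L^p(\mu_Y)$, we get $\max_{0\le j\le R}\tau\circ F^j=o(R^{1/p})=o(n^{1/p})$ a.e. Translating this from $F$-time to $f$-time (which only shortens the effective index), and checking that the induced orbit indeed terminates after $R\le n$ returns, completes the proof. The main obstacle is precisely this bookkeeping: making rigorous that a $\mu_\Delta$-a.e.\ orbit on $\Delta$ projects to a $\mu_Y$-a.e.\ orbit on $Y$ under $\pi_Y$ and that the maximum of $|\chi|$ over the first $n$ iterates of $f$ is controlled by the maximum of $\tau$ over the first $n$ iterates of $F$, for which one invokes the $f$-invariance of $\mu_\Delta$, the Poincar\'e recurrence/ergodicity ensuring $R\to\infty$, and the explicit column structure of $\chi$.
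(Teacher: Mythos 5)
Your final paragraph is exactly the paper's argument: bound $|\chi(y,\ell)|\ll\tau(y)\|v\|_\eta$ via the column structure, observe that $f^k(y,\ell)$ has base point $F^jy$ with $j\le k\le n$, so $\max_{0\le k\le n}|\chi\circ f^k|\ll\|v\|_\eta\max_{0\le j\le n}\tau\circ F^j$, and conclude from the ergodic-theorem fact that $\tau\circ F^n=o(n^{1/p})$ a.e.\ for $\tau\in L^p(\mu_Y)$. The earlier discussion of why Borel--Cantelli and the $L^{p-1}$ bound on $\chi$ only give $o(n^{1/(p-1)})$ is a correct diagnosis but is not needed; the closing argument is sound and coincides with the paper's proof (noting simply that $R\le n$ lets one bound by the maximum over the first $n$ returns).
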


\begin{proof}
Since $\tau\in L^p$, it follows from the ergodic theorem that
$\tau\circ F^n=o(n^{1/p})$ a.e., and hence that
$\max_{\,0\le k\le n}\tau\circ F^k=o(n^{1/p})$ a.e.

Next, $|\chi(y,\ell)|\le |\chi'|_\infty+\ell|v|_\infty\ll \tau(y)\|v\|_\eta$.
For any $(y,\ell)\in\Delta$ and $n\ge0$, there exists
$k\in\{0,\dots, n\}$ and $\ell'\in\{0,\dots,\tau(F^ky)-1\}$ such that
$f^n(y,\ell)=(F^ky,\ell')$.
Hence $|\chi(f^n(y,\ell))|\ll \|v\|_\eta\max_{\,0\le k\le n}\tau(F^ky)$ and so
$\max_{\,0\le k\le n}|\chi(f^k(y,\ell))|\ll \|v\|_\eta\max_{\,0\le k\le n}\tau(F^ky)=o(n^{1/p})$ a.e.
\end{proof}

No uniformity of constants is claimed in Proposition~\ref{prop-chiae}.
It is straightforward to show that $\big|\max_{\,0\le k\le n}|\chi\circ f^k|\big|_{p-1}\le C\|v\|_\eta n^{1/(p-1)}$ where $C$ is a uniform constant.  However, for various purposes (such as optimal moment estimates in Corollary~\ref{cor-moment}) we require the following more delicate estimate.

\begin{prop} \label{prop-chi}
    $\big| \max_{1 \leq k \leq n} |\chi \circ f^k - \chi| \big|_p 
    \le C \|v\|_\eta n^{1/p}$.
  Moreover,
  \[ \SMALL
    \big| \max_{1 \leq k \leq n} |\chi \circ f^k - \chi| \big|_p 
    \le C \|v\|_\eta ( n^{1/q} + n^{1/p} |1_{\{\tau\ge n^{1/q}\}}\tau|_p )
    \quad \text{for all } n \ge 0,\,q\ge p.
  \]
\end{prop}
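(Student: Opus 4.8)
The plan is to establish the displayed (``Moreover'') inequality; the first inequality then follows by taking $q=p$, since $|1_{\{\tau\ge n^{1/p}\}}\tau|_p\le|\tau|_p$ is absorbed into $C$. For $n=0$ both sides vanish, so assume $n\ge1$ and set $N:=n^{1/q}$ (note $N\le n$), $\tau_L:=1_{\{\tau\ge N\}}\tau$, $\tau_S:=\tau-\tau_L<N$. I would first record an elementary transfer-operator bound: \emph{$P^i\tau\le C|\tau|_1$ for all $i\ge1$}. Indeed $(P\tau)(y)=\sum_{a\in\alpha}\zeta(y_a)\tau(a)\le C\sum_{a\in\alpha}\mu_Y(a)\tau(a)=C|\tau|_1$ by~\eqref{eq-PL} and Proposition~\ref{prop-zeta}, and iterating, using that $P$ is positive with $P1=1$, gives the claim. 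Consequently, by the defining property of $P$ (which extends to non-negative $\psi$ by monotone convergence),
\[
\int_Y \tau(y)\,\psi(F^iy)\,d\mu_Y=\int_Y\psi\,(P^i\tau)\,d\mu_Y\le C|\tau|_1\,|\psi|_1\qquad(\psi\ge0,\ i\ge1),
\]
which I will apply with $\psi=\tau_L^{\,p}$.

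The second ingredient is a pointwise estimate on $\Delta$. Fix $(y,\ell)\in\Delta$ and, for $1\le k\le n$, write $f^k(y,\ell)=(F^{I(k)}y,\ell_k)$ with $0\le\ell_k<\tau(F^{I(k)}y)$ and $\ell_k+R_{I(k)}(y)=k+\ell$, where $R_i(y):=\sum_{j=0}^{i-1}\tau(F^jy)$. Since $R_i(y)\ge\tau(y)+i-1$ for $i\ge1$ while $R_{I(k)}(y)\le k+\ell\le n+\tau(y)-1$, we obtain $I(k)\le n$ for all $1\le k\le n$. Using $|\chi(y',\ell')|\le|\chi'|_\infty+\ell'|v|_\infty\le C\|v\|_\eta\tau(y')$ (from the definition of $\chi$ together with $\|\chi'\|_\infty,|v|_\infty\ll\|v\|_\eta$): while $f^k(y,\ell)$ stays on the current floor, i.e.\ $1\le k\le\tau(y)-1-\ell$, we have $\chi\circ f^k(y,\ell)-\chi(y,\ell)=\sum_{m=\ell}^{\ell+k-1}\phi(y,m)$, of modulus $\le k|v|_\infty\le\min(n,\tau(y))\,|v|_\infty$; and once it has left the current floor — which can happen for some $1\le k\le n$ only if $\ell\ge\tau(y)-n$ — we bound $|\chi\circ f^k(y,\ell)-\chi(y,\ell)|\le|\chi(f^k(y,\ell))|+|\chi(y,\ell)|\le C\|v\|_\eta\big(\tau(F^{I(k)}y)+\tau(y)\big)\le C\|v\|_\eta\max_{0\le i\le n}\tau(F^iy)$ using $1\le I(k)\le n$. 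Hence
\[
\max_{1\le k\le n}|\chi\circ f^k-\chi|(y,\ell)\le C\|v\|_\eta\Big(\min(n,\tau(y))+1_{\{\ell\ge\tau(y)-n\}}\max_{0\le i\le n}\tau(F^iy)\Big).
\]

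Finally I integrate over $\Delta$ using $\mu_\Delta=(\mu_Y\times\{{\rm counting}\})/|\tau|_1$. Raising the last display to the $p$-th power, using $(a+b)^p\ll a^p+b^p$, and summing over $\ell\in\{0,\dots,\tau(y)-1\}$ (the first term contributes a factor $\tau(y)$, and $\sum_\ell 1_{\{\ell\ge\tau(y)-n\}}=\min(n,\tau(y))$), it remains to bound $\int_Y\tau(y)\min(n,\tau(y))^p\,d\mu_Y$ and $\int_Y\min(n,\tau(y))\big(\max_{0\le i\le n}\tau(F^iy)\big)^p\,d\mu_Y$. The elementary inequality $t\min(n,t)^p\le nt^p$ $(t\ge1)$ together with splitting at $\{\tau<N\}$ versus $\{\tau\ge N\}$ handles all ``$\tau(y)$-only'' pieces, giving $\ll N^p|\tau|_1+n|\tau_L|_p^p$. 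For the second integral, write $\max_{0\le i\le n}\tau(F^iy)\le N+\max_{0\le i\le n}\tau_L(F^iy)$, so $\big(\max_{0\le i\le n}\tau(F^iy)\big)^p\ll N^p+\sum_{i=0}^n\tau_L(F^iy)^p$; the $i=0$ summand yields $\int_Y\min(n,\tau)\tau_L^{\,p}\,d\mu_Y\le n|\tau_L|_p^p$, and for $i\ge1$ the transfer-operator bound gives $\sum_{i=1}^n\int_Y\tau(y)\tau_L(F^iy)^p\,d\mu_Y\le Cn|\tau|_1|\tau_L|_p^p$. Collecting terms and using $1\le|\tau|_1\le C$, I get $\int_\Delta(\max_{1\le k\le n}|\chi\circ f^k-\chi|)^p\,d\mu_\Delta\le C\|v\|_\eta^p\big(N^p+n|\tau_L|_p^p\big)$; taking $p$-th roots (subadditivity of $t\mapsto t^{1/p}$) and recalling $N=n^{1/q}$, $\tau_L=1_{\{\tau\ge n^{1/q}\}}\tau$ gives the claim.

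The step I expect to be delicate is obtaining the pointwise estimate at the right level of sharpness. Replacing $|\chi\circ f^k(y,\ell)-\chi(y,\ell)|$ by the crude $C\|v\|_\eta\big(\tau(y)+\tau(F^{I(k)}y)\big)$ uniformly in $\ell$ is too lossy on the current floor, since summing it over the $\tau(y)$ levels would force $\tau\in L^{p+1}$. One must retain the $\min(n,\tau)$ saving while on the current floor and, crucially, observe that the floor index $I(k)$ of $f^k(y,\ell)$ never exceeds $n$; together with the uniform bound $P^i\tau\le C|\tau|_1$ this is exactly what makes $\sum_{i=1}^n\int_Y\tau(y)(\tau\circ F^i)^p\,d\mu_Y$ comparable to $n|\tau|_1|\tau|_p^p$ rather than to $n|\tau|_{p+1}^{p+1}$.
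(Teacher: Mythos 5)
Your proof is correct and follows essentially the paper's own argument: the same decomposition according to whether the orbit leaves the current column within $n$ steps (your indicator $1_{\{\ell\ge\tau(y)-n\}}$ is exactly the paper's set $\Delta\setminus A_n$), the same pointwise bound $|\chi(y,\ell)|\ll\tau(y)\|v\|_\eta$, the same threshold split of $\tau$ at $n^{1/q}$, and the same uniform bound on $P^i\tau$ (via Proposition~\ref{prop-zeta} and duality) to control the terms $\int_Y\tau\,(\tau_L^p\circ F^i)\,d\mu_Y$. The only cosmetic difference is that the paper's tail-summation estimate for $\mu_\Delta(A_n)$ is replaced in your write-up by the finer on-column bound $\min(n,\tau(y))|v|_\infty$ together with the elementary inequality $t\min(n,t)^p\le nt^p$, which is equivalent bookkeeping.
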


\begin{proof}
  Define \(t_a = |1_{\{\tau \geq a\}} \tau|_p \), $a\ge0$.
Then
  \begin{align}
    \label{eq-asy2} \nonumber
      \SMALL \sum_{k \geq n} k^{p-1} \mu_Y (\tau \geq k) 
      &= \SMALL \sum_{k \geq n} \sum_{j \geq k} k^{p-1} \mu_Y(\tau=j)
      \\ & \SMALL= \sum_{j \geq n} \mu_Y(\tau=j) \sum_{k=n}^j k^{p-1} 
       \leq \sum_{j \geq n} j^p \mu_Y(\tau=j)
      = t_n^p.
  \end{align}

  Let \(\Delta_n = \{(y, \ell) \in \Delta : \ell=n\}\)
  and \(A_n = \{(y, \ell) \in \Delta : \ell < \tau(y)-n\} \).
  Then \(\mu_\Delta(\Delta_n) = |\tau|_1^{-1} \mu_Y(\tau \geq n)\) and
  \(\mu_\Delta(A_n) = \mu_\Delta(\cup_{k \geq n} \Delta_k)
  = |\tau|_1^{-1} \sum_{k \geq n} \mu_Y(\tau \geq k)\).
  By~\eqref{eq-asy2},
  \[
    \SMALL
    n^{p-1} \mu_\Delta(A_n)
    = n^{p-1} |\tau|_1^{-1} \sum_{k \geq n} \mu_Y(\tau \geq k)
    \leq \sum_{k \geq n} k^{p-1} \mu_Y(\tau \geq k)
    \leq t_n^p.
  \]
  If \((y, \ell) \in A_n\), then \(\max_{1 \leq k \leq n}
  \big|(\chi \circ f^k - \chi)(y, \ell)\big| \leq n |v|_\infty\).
  Therefore
  \begin{align} \nonumber
    \label{eq-lbakf}
\SMALL 
    \big|1_{A_n}
      \max_{1 \leq k \leq n} |\chi \circ f^k - \chi|
    \big|_p 
    & \leq n |v|_\infty [\mu_\Delta(A_n)]^{1/p}
    \\ & = n^{1/p} |v|_\infty [n^{p-1} \mu_\Delta(A_n)]^{1/p}
     \leq |v|_\infty n^{1/p} t_n.
  \end{align}

For all $(y,\ell)\in\Delta$,
  we have $|\chi(y,\ell)| \ll \tau(y)\|v\|_\eta$
  and so $|\chi\circ f^k|\ll \|v\|_\eta\max_{\,0\le j\le k}\tau\circ F^j$.
  Let \(a > 0\) and denote \(\tau_a = 1_{\{\tau > a\}}\tau\).
  Since \(\tau^p \leq a^p + \tau_a^p\), 
  \begin{align}
    \label{eq-ff9ns} \nonumber
      \SMALL
      \|v\|_\eta^{-p} & \SMALL \max_{1\le k\le n}|\chi(f^k(y, \ell))-\chi(y, \ell)|^p
      \leq 2^p \|v\|_\eta^{-p} \max_{0\le k\le n}|\chi(f^k(y, \ell))|^p
      \\ & \SMALL \ll  \max_{\,0\le k\le n} \tau^p (F^ky)
      \leq  a^p +  {\SMALL \sum_{0 \leq k \leq n}} \tau_a^p (F^ky). 
    \end{align}

  Suppose that $\psi:\Delta\to\R$ has the form
  $\psi(y,\ell)=\psi_0(y)$ where $\psi_0:Y\to\R$.
  Then
  \begin{align} 
    \label{eq-j8ia}
    \SMALL
    \int_{\Delta\setminus A_n}|\psi|\,d\mu_\Delta
    =|\tau|_1^{-1}\int_Y\min\{\tau,n\}|\psi_0|\,d\mu_Y
    \le \int_Y\min\{\tau,n\}|\psi_0|\,d\mu_Y.
  \end{align}

  Taking $v=1$ in Proposition~\ref{prop-iLip} (resulting in $\phi=1$, $\phi'=\tau$)
  and using that $P$ is a contraction
  yields the estimate $|P^k\tau|_\infty\le |P\tau|_\infty\ll \|1\|_\eta=1$
  for all \(k \geq 1\).
  Then by equations~\eqref{eq-ff9ns} and~\eqref{eq-j8ia},
  \begin{align*}
    \SMALL
    \|v\|_\eta^{-p} & \SMALL \int_{\Delta\setminus A_n}\max_{1\le k\le n}|\chi\circ f^k-\chi|^p
      \,d\mu_\Delta
    \ll \SMALL a^p + \sum_{0 \leq k \leq n} 
      \int_{\Delta\setminus A_n} \tau_a^p (F^ky)
      \,d\mu_\Delta(y,\ell)
    \\ & \SMALL \leq a^p + \sum_{0 \leq k \leq n} 
      \int_Y \min\{\tau, n\} \, \tau_a^p \circ F^k \, d\mu_Y
    \leq a^p + n |\tau_a^p|_1 + \sum_{k=1}^n | \tau \, \tau_a^p \circ F^k |_1
    \\ & \SMALL= a^p + n |\tau_a^p|_1 + \sum_{k=1}^n | P^k \tau \, \tau_a^p |_1
    \ll \SMALL a^p + n |\tau_a^p|_1 = a^p + n t_a^p.
  \end{align*}
Hence
 \begin{align*}
   \big| 1_{\Delta \setminus A_n} \max_{1 \leq k \leq n} |\chi \circ f^k - \chi| \big|_p 
   & \ll \|v\|_\eta ( a^p + nt_a^p )^{1/p}
   \leq \|v\|_\eta ( a + n^{1/p}t_a ).
 \end{align*}
 We take \(a = n^{1/q}\). 
 Combining with~\eqref{eq-lbakf} and using
 \(t_n \leq t_{n^{1/q}}\) completes the proof.~
\end{proof}

\begin{cor} \label{cor-chi}
$\big| \max_{1 \leq k \leq n} |\chi \circ f^k - \chi| \big|_p =o(n^{1/p})$.
\qed
\end{cor}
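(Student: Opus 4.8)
The plan is to deduce this immediately from the second, refined bound in Proposition~\ref{prop-chi}, the point being that one trades a little of the exponent in order to force the tail term to vanish. Fix any $q > p$ and apply that bound: $\big| \max_{1 \leq k \leq n} |\chi \circ f^k - \chi| \big|_p \le C\|v\|_\eta\big(n^{1/q} + n^{1/p}\,|1_{\{\tau\ge n^{1/q}\}}\tau|_p\big)$ for all $n \ge 0$. The first term is harmless: since $1/q < 1/p$ we have $n^{1/q} = o(n^{1/p})$. So it remains only to show that $t_n' := |1_{\{\tau\ge n^{1/q}\}}\tau|_p \to 0$ as $n \to \infty$, for then $n^{1/p} t_n' = o(n^{1/p})$ and combining the two terms yields $o(n^{1/p})$ as required.

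To see that $t_n' \to 0$, recall that by hypothesis $\tau \in L^p(\mu_Y)$, so $\tau^p \in L^1(\mu_Y)$. As $n \to \infty$ the threshold $n^{1/q} \to \infty$, so for each fixed $y \in Y$ eventually $\tau(y) < n^{1/q}$; hence $1_{\{\tau \ge n^{1/q}\}}\tau^p \to 0$ pointwise $\mu_Y$-a.e. Since $1_{\{\tau \ge n^{1/q}\}}\tau^p \le \tau^p \in L^1(\mu_Y)$, the dominated convergence theorem gives $\int_Y 1_{\{\tau \ge n^{1/q}\}}\tau^p\,d\mu_Y \to 0$, i.e. $(t_n')^p \to 0$, hence $t_n' \to 0$. (No special treatment of the case $p = 1$ is needed; the argument is identical.)

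There is essentially no obstacle here — all the real work was done in establishing the delicate estimate of Proposition~\ref{prop-chi}, and this corollary is just the observation that taking $q$ strictly larger than $p$ converts the uniform $O(n^{1/p})$ bound into a genuine $o(n^{1/p})$, at the cost of the now negligible error $n^{1/q}$. The only thing to be mildly careful about is that Proposition~\ref{prop-chi} is stated for $q \ge p$, which certainly permits the choice $q > p$ used above.
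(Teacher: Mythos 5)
Your proof is correct and is exactly the argument the paper intends: the corollary is stated with no proof because it follows immediately from the second bound in Proposition~\ref{prop-chi} by fixing $q>p$, noting $n^{1/q}=o(n^{1/p})$, and using dominated convergence (with $\tau\in L^p(\mu_Y)$) to get $|1_{\{\tau\ge n^{1/q}\}}\tau|_p\to 0$.
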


The next result justifies calling
$\phi=m+\chi\circ f-\chi$ a {\em martingale-coboundary decomposition}.
Let $U$ denote the Koopman operator corresponding to $f$, i.e.\ $Uv=v\circ f$.  

\begin{prop}   \label{prop-mart}
Fix $n\ge1$.
Let $\cM$ denote the underlying $\sigma$-algebra on $(\Delta,\mu_\Delta)$ and 
define $\cG_j=f^{-(n-j)}\cM$, $1\le j\le n$.
Then $\{m\circ f^{n-j},\,\cG_j;\,1\le j\le n\}$ is a sequence of 
{\em martingale differences}.  That is, $\cG_1\subset\dots\subset \cG_n$,
$m\circ f^{n-j}$ is $\cG_j$-measurable for each $j$, 
and $\E(m\circ f^{n-j}|\cG_{j-1})=0$ for each $j$. 
\end{prop}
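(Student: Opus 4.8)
The plan is to reduce everything to the single algebraic fact $m\in\ker L$ (Proposition~\ref{prop-m}) together with the standard dictionary between conditional expectations and the transfer operator. Recall first that, since $f$ is measure-preserving, a function on $\Delta$ is $f^{-k}\cM$-measurable precisely when it has the form $g\circ f^k$, and that for $\psi\in L^1(\mu_\Delta)$,
\[
\E(\psi\mid f^{-k}\cM)=(L^k\psi)\circ f^k .
\]
This is immediate from the defining relation $\int_\Delta L\psi\,w\,d\mu_\Delta=\int_\Delta \psi\,(w\circ f)\,d\mu_\Delta$ iterated $k$ times: for any bounded measurable $g$ one has $\int_\Delta \psi\,(g\circ f^k)\,d\mu_\Delta=\int_\Delta L^k\psi\,g\,d\mu_\Delta=\int_\Delta \bigl((L^k\psi)\circ f^k\bigr)(g\circ f^k)\,d\mu_\Delta$, and $(L^k\psi)\circ f^k$ is $f^{-k}\cM$-measurable. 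I would also record the companion identity $L(w\circ f)=w$, which follows the same way from $\int_\Delta L(w\circ f)\,g\,d\mu_\Delta=\int_\Delta (w\circ f)(g\circ f)\,d\mu_\Delta=\int_\Delta wg\,d\mu_\Delta$; iterating gives $L^k(w\circ f^k)=w$.

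Next I would verify the three defining properties in turn. The filtration property $\cG_1\subset\cdots\subset\cG_n$ holds because $f^{-1}\cM\subset\cM$, hence $\cG_{j-1}=f^{-(n-j+1)}\cM=f^{-(n-j)}(f^{-1}\cM)\subset f^{-(n-j)}\cM=\cG_j$. The $\cG_j$-measurability of $m\circ f^{n-j}$ is immediate since it is a function composed with $f^{n-j}$, and $m\in L^p(\mu_\Delta)\subset L^1(\mu_\Delta)$ by Proposition~\ref{prop-mr}, so the conditional expectations are well defined.

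For the martingale-difference property, set $k=n-j$ and apply the displayed identity with $k+1$ in place of $k$:
\[
\E(m\circ f^{n-j}\mid\cG_{j-1})=\E(m\circ f^{k}\mid f^{-(k+1)}\cM)=\bigl(L^{k+1}(m\circ f^{k})\bigr)\circ f^{k+1}.
\]
Using $L^{k}(m\circ f^{k})=m$, the right-hand side equals $(Lm)\circ f^{k+1}$, which vanishes because $m\in\ker L$ by Proposition~\ref{prop-m}.

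There is no substantial obstacle: all the analytic content sits in Proposition~\ref{prop-m}. The only point requiring care is the index bookkeeping — namely that conditioning $m\circ f^{n-j}$ down to $\cG_{j-1}$ corresponds to applying \emph{one extra} power of $L$, and it is exactly this extra power that turns $m$ into $Lm=0$.
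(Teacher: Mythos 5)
Your proof is correct and follows essentially the same route as the paper: both arguments reduce the martingale-difference property to $m\in\ker L$ via the standard identification of conditional expectation onto $f^{-k}\cM$ with the transfer operator (the paper phrases it as $UL=\E(\cdot\,|f^{-1}\cM)$ and composes with $f^{n-j}$, while you apply $\E(\psi\mid f^{-k}\cM)=(L^k\psi)\circ f^k$ together with $L^k(w\circ f^k)=w$, which yields the same expression $(Lm)\circ f^{n-j+1}=0$). The filtration and measurability checks are identical to the paper's.
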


\begin{proof}
Since $f^{-1}\cM\subset\cM$, it follows that $\cG_j\subset\cG_{j+1}$.
Measurability of $m\circ f^{n-j}$ with respect to $\cG_j$ is clear.
It is standard, and easy to check, that $UL=\E(\,\cdot\, |f^{-1}\cM)$.
Hence
\begin{align*}
\E(m\circ f^{n-j}|\cG_{j-1})
& =\E(m|f^{-1}\cM)\circ f^{n-j}
 =(ULm)\circ f^{n-j}=0,
\end{align*}
since $m\in\ker L$.
\end{proof}

\subsection{Some limit theorems}
\label{sec-some}

Suppose that $v:\Lambda\to\R^d$ 
is H\"older and $\int_\Lambda v\,d\mu=0$.
By the results from Subsection~\ref{sec-primary} we have the decomposition
$v\circ\pi_\Delta=\phi= m+\chi\circ f-\chi$,
where $m,\,\chi$ satisfy the estimates in Propositions~\ref{prop-mr} and~\ref{prop-chi}.

\begin{cor}[Moments]  \label{cor-moment}
If $p\le 2$, then
$\big|\max_{j\le n}|\sum_{k=0}^{j-1}v\circ T^k|\big|_p\le C\|v\|_\eta n^{1/p}$ 
and
$\big|\max_{j\le n}|\sum_{k=0}^{j-1}m\circ f^k|\big|_p\le C\|v\|_\eta n^{1/p}$ 
for all \mbox{$n\ge1$}.

If $p\ge 2$, then
$\big|\max_{j\le n}|\sum_{k=0}^{j-1}v\circ T^k|\big|_{2(p-1)}\le C\|v\|_\eta n^{1/2}$ 
for all \mbox{$n\ge1$}.
\end{cor}

\begin{proof} Since $\{m\circ f^{n-j};\,1\le j\le n\}$ is a sequence of martingale differences with respect to the filtration $\cG_j=f^{n-j}\cM$ for each $n\ge1$ (Proposition~\ref{prop-mart}), it follows from Burkholder's inequality~\cite{Burkholder73} and Proposition~\ref{prop-mr} that for $p\le2$,
\[
\SMALL \big|\max_{j\le n}|\sum_{k=1}^j m\circ f^{n-k}|\big|_p\ll n^{1/p}|m|_p \ll n^{1/p}\|v\|_\eta.
\]
Writing $\sum_{k=0}^{j-1}m\circ f^k=\sum_{k=1}^n m\circ f^{n-k}-
\sum_{j=1}^{n-j}m\circ f^{n-k}$, we obtain that
$\big|\max_{j\le n}|\sum_{k=0}^{j-1} m\circ f^k|\big|_p\ll n^{1/p}\|v\|_\eta$.
Combining this with Proposition~\ref{prop-chi} yields
$\big|\max_{j\le n}|\sum_{k=0}^{j-1} \phi\circ f^k|\big|_p\ll n^{1/p}\|v\|_\eta$ and the result for $p\le2$ follows.

When $p\ge2$, we use Rio's inequality~\cite{Rio00} following~\cite{MN08}.
See~\cite[Proposition~7]{MerlevedePeligradUtev06} for a statement of Rio's inequality.
Let $X_j=\phi\circ f^{n-j}$.
For $1\le j\le \ell\le n$, by Proposition~\ref{prop-mart},
$\sum_{k=j}^\ell\E(X_k|\cG_j)=m\circ f^{n-j}+\E(\chi\circ f^{n+1-\ell}|\cG_j)-\chi\circ f^{n-j}$.
By Proposition~\ref{prop-mr}, $\max_{1\le j\le\ell\le n}|\sum_{k=j}^\ell\E(X_k|\cG_j)|_{p-1}\ll \|v\|_\eta$.
Hence $\max_{1\le j\le \ell\le n}|X_j\sum_{k=j}^\ell \E(X_k|\cG_j)|_{p-1}
\le |\phi|_\infty\max_{1\le j\le \ell\le n}|\sum_{k=j}^\ell \E(X_k|\cG_j)|_{p-1}
\ll \|v\|_\eta^2$.  The result follows by Rio's inequality.
\end{proof}

\begin{rmk} \label{rmk-moment}
The moment estimates for $p\ge2$ were first obtained in~\cite{MN08} and
the results for $p<2$ are due to~\cite{DedeckerMerlevede15,GouezelM14}.

Corollary~\ref{cor-moment} is easily seen to be optimal given the formulation of our results in this paper in terms of the integrability of the return time $p$.
Often a tail estimate of the form
$\mu_Y(\tau>n)=O(n^{-p})$ is available, and this gives rise to some interesting subtleties; such issues are also resolved in~\cite{DedeckerMerlevede15,GouezelM14}.
On the other hand, these references do not explicitly address the uniformity of the constant $C$ which is required in later sections.
\end{rmk}

\begin{cor}[Covariance] \label{cor-Sigma}  Suppose that $p\ge2$.
Then
$\lim_{n\to\infty}n^{-1}\int_\Lambda
(\sum_{j=0}^{n-1}v\circ T^j)
(\sum_{j=0}^{n-1}v\circ T^j)^T d\mu=\int_\Delta m\,m^T d\mu_\Delta$.
\end{cor}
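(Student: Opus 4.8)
The plan is to lift the problem to the Young tower and exploit the decomposition $v\circ\pi_\Delta = \phi = m + \chi\circ f - \chi$ of Subsection~\ref{sec-primary}. Since $\pi_\Delta$ is a measure-preserving semiconjugacy from $f$ to $T$ (so $(\pi_\Delta)_*\mu_\Delta=\mu$ and $\phi\circ f^j = v\circ T^j\circ\pi_\Delta$), we have $\big(\sum_{j=0}^{n-1}v\circ T^j\big)\circ\pi_\Delta = \sum_{j=0}^{n-1}\phi\circ f^j =: S_n$, so the left-hand side equals $n^{-1}\int_\Delta S_n S_n^T\,d\mu_\Delta$. Writing $M_n = \sum_{j=0}^{n-1}m\circ f^j$ and telescoping the coboundary, $S_n = M_n + \chi\circ f^n - \chi$, and it suffices to treat the three resulting contributions to $n^{-1}\int_\Delta S_nS_n^T\,d\mu_\Delta$ separately.

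First I would compute the main term $n^{-1}\int_\Delta M_nM_n^T\,d\mu_\Delta$. Expanding $M_nM_n^T$ as $\sum_{i,j}(m\circ f^{n-j})(m\circ f^{n-i})^T$ and invoking Proposition~\ref{prop-mart}: for $i<j$ the factor $m\circ f^{n-i}$ is $\cG_i$-measurable, hence $\cG_{j-1}$-measurable, so by the tower property and $\E(m\circ f^{n-j}\mid\cG_{j-1})=0$ every off-diagonal term integrates to zero (here $p\ge2$ guarantees $m\in L^2$, so these products are integrable). The diagonal terms contribute $\sum_{k=0}^{n-1}\int_\Delta(m\circ f^k)(m\circ f^k)^T\,d\mu_\Delta = n\int_\Delta mm^T\,d\mu_\Delta$ by $f$-invariance of $\mu_\Delta$. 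Thus $n^{-1}\int_\Delta M_nM_n^T\,d\mu_\Delta = \int_\Delta mm^T\,d\mu_\Delta$ exactly, for every $n$; no limit is needed for this piece.

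It remains to show the two error terms vanish as $n\to\infty$. By Proposition~\ref{prop-mr} (using $p\ge2$), $|m|_2\le|m|_p\le C\|v\|_\eta$, whence $|M_n|_2 = n^{1/2}|m|_2\ll\|v\|_\eta\, n^{1/2}$ by the same orthogonality applied componentwise. By Corollary~\ref{cor-chi} and $L^p\subseteq L^2$, $|\chi\circ f^n - \chi|_2\le|\chi\circ f^n-\chi|_p = o(n^{1/p})$. Hence the cross term satisfies $n^{-1}\big|\int_\Delta M_n(\chi\circ f^n-\chi)^T\,d\mu_\Delta\big|\le n^{-1}|M_n|_2\,|\chi\circ f^n-\chi|_2\ll\|v\|_\eta\, n^{-1/2+1/p}o(1)\to0$, and the quadratic term satisfies $n^{-1}|\chi\circ f^n-\chi|_2^2 = o(n^{2/p-1})\to0$, both because $1/p\le1/2$. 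Combining the three contributions gives the claim.

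I do not anticipate a real obstacle: the martingale orthogonality makes the main term exact, and the only points needing mild attention are routing the coboundary estimates through $L^2$ (legitimate since $p\ge2$) and using the refined $o(n^{1/p})$ bound of Corollary~\ref{cor-chi} rather than the cruder $O(n^{1/p})$ bound of Proposition~\ref{prop-chi}, so that the borderline case $p=2$ is handled.
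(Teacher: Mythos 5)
Your proof is correct and follows essentially the same route as the paper: lift to the tower, use the martingale property of $m$ (there phrased as $m\in\ker L$) to get $\int_\Delta S_nm\,S_nm^T\,d\mu_\Delta=n\int_\Delta m\,m^T\,d\mu_\Delta$ exactly, and kill the remainder by Cauchy--Schwarz using the $O(n^{1/2})$ bounds in $L^2$ together with $|\chi\circ f^n-\chi|_2=o(n^{1/p})$ from Corollary~\ref{cor-chi}. The only cosmetic difference is that you expand $S_nS_n^T$ into cross and quadratic error terms (and derive $|M_n|_2=n^{1/2}|m|_2$ directly), whereas the paper factors the difference of quadratic forms and cites Corollary~\ref{cor-moment}; both are the same estimate.
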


\begin{proof}
Write $S_nv=\sum_{j=0}^{n-1}v\circ T^j$ and similarly define $S_n\phi$ 
and $S_nm$.
Since $m\in\ker L$, 
$\int_\Delta S_nm\,S_nm^T\,d\mu_\Delta =n\int_\Delta m\,m^T\,d\mu_\Delta$.

By Corollary~\ref{cor-moment}, $|S_n\phi|_2\ll n^{1/2}\|v\|_\eta$ and
$|S_nm|_2\ll n^{1/2}\|v\|_\eta$.  Hence,
\begin{align*}
   \Big|
&     n^{-1}\int_\Lambda S_nv\,S_nv^T \, d\mu -    \int_\Delta m\,m^T\,d\mu_\Delta 
  \Big| 
    =n^{-1}\Big|
    \int_\Delta S_n\phi\, S_n\phi^T \, d\mu_\Delta - \int_\Delta S_nm\,S_nm^T\,d\mu_\Delta 
  \Big| \\
& \qquad\qquad  \le n^{-1}|S_n\phi\, S_n\phi^T-S_nm\,S_nm^T|_1
\le 
n^{-1}(|S_n\phi|_2+|S_nm|_2)|S_n\phi-S_nm|_2
\\ &  \qquad\qquad \ll n^{-1/2}|\chi\circ f^n-\chi|_2\|v\|_\eta\to0
\end{align*}
by Corollary~\ref{cor-chi}.
\end{proof}

For $n\ge1$, define the process
  $W_n(t)  = n^{-1/2}\sum_{j=0}^{[nt]-1} v     \circ T^j$ on $(\Lambda,\mu)$.

\begin{cor}[WIP] \label{cor-WIP}
Suppose that $p\ge2$.
Then $W_n\to_\mu W$ where $W$ is Brownian motion
with covariance $\Sigma=\lim_{n\to\infty}n^{-1}\int_\Lambda 
(\sum_{j=0}^{n-1}v\circ T^j)
(\sum_{j=0}^{n-1}v\circ T^j)^T\,d\mu$.
\end{cor}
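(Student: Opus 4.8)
The plan is to transport the problem to the Young tower and exploit the primary martingale-coboundary decomposition of Subsection~\ref{sec-primary}. Since $\pi_\Delta$ is a semiconjugacy with $(\pi_\Delta)_*\mu_\Delta=\mu$, the process $W_n$ on $(\Lambda,\mu)$ has the same distribution as $t\mapsto n^{-1/2}\sum_{j=0}^{[nt]-1}\phi\circ f^j$ on $(\Delta,\mu_\Delta)$, where $\phi=v\circ\pi_\Delta$. Writing $\phi=m+\chi\circ f-\chi$ gives $\sum_{j=0}^{[nt]-1}\phi\circ f^j=\sum_{j=0}^{[nt]-1}m\circ f^j+\chi\circ f^{[nt]}-\chi$, and the coboundary contribution satisfies $\sup_{0\le t\le1}n^{-1/2}|\chi\circ f^{[nt]}-\chi|=n^{-1/2}\max_{1\le k\le n}|\chi\circ f^k-\chi|$, which tends to $0$ in probability since $n^{-1/2}$ times the $L^p$-estimate of Corollary~\ref{cor-chi} is $o(n^{1/p-1/2})\to0$ for $p\ge2$. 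Hence the coboundary is negligible in the uniform norm and it suffices to prove the WIP for $M_n(t)=n^{-1/2}\sum_{j=0}^{[nt]-1}m\circ f^j$ on $(\Delta,\mu_\Delta)$.

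For the martingale part I would apply a functional CLT for martingale differences. Since $m\in\ker L$ and $UL=\E(\,\cdot\mid f^{-1}\cM)$, the sequence $\{m\circ f^j\}_{j\ge0}$ is a stationary ergodic reverse martingale difference sequence for the decreasing filtration $f^{-j}\cM$, with $m\in L^2(\mu_\Delta)$ by Proposition~\ref{prop-mr}; equivalently, for each fixed $n$, Proposition~\ref{prop-mart} provides the forward martingale difference array $\{m\circ f^{n-j},\cG_j\}_{1\le j\le n}$, whose partial sums $M_{n,k}=\sum_{j=1}^k m\circ f^{n-j}$ recover $\sum_{j=0}^{[nt]-1}m\circ f^j=M_{n,n}-M_{n,n-[nt]}$. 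Applying a martingale-array invariance principle to the row $t\mapsto n^{-1/2}M_{n,[nt]}$ and then time-reversing (the time reversal of Brownian motion on $[0,1]$ is again Brownian motion with the same covariance) yields $M_n\to_{\mu_\Delta}\tilde W$, Brownian motion with covariance $\Sigma=\int_\Delta mm^T\,d\mu_\Delta$. The hypothesis that needs checking is convergence of the conditional variance: $\E\big((m\circ f^{n-j})(m\circ f^{n-j})^T\mid\cG_{j-1}\big)=(UL(mm^T))\circ f^{n-j}$, so that $n^{-1}\sum_{j=1}^{[nt]}\E(\,\cdot\mid\cG_{j-1})=n^{-1}\sum_{i=n-[nt]}^{n-1}(UL(mm^T))\circ f^i$ is a sliding ergodic average converging a.e.\ to $t\int_\Delta UL(mm^T)\,d\mu_\Delta=t\int_\Delta mm^T\,d\mu_\Delta=t\Sigma$ (using $\int U(\,\cdot\,)\,d\mu_\Delta=\int L(\,\cdot\,)\,d\mu_\Delta=\int(\,\cdot\,)\,d\mu_\Delta$); the Lindeberg condition is immediate from $m\in L^2$ and stationarity.

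Finally, Corollary~\ref{cor-Sigma} identifies $\Sigma=\int_\Delta mm^T\,d\mu_\Delta$ with $\lim_{n\to\infty}n^{-1}\int_\Lambda(\sum_{j=0}^{n-1}v\circ T^j)(\sum_{j=0}^{n-1}v\circ T^j)^T\,d\mu$, and transferring back through $\pi_\Delta$ gives $W_n\to_\mu W$ with $W$ Brownian motion of the stated covariance. The main obstacle is the martingale FCLT step: one must keep track of the fact that the natural martingale object here is a triangular array — equivalently, a reverse martingale difference sequence — rather than a fixed martingale, and verify that the sliding ergodic averages of $UL(mm^T)$ over the windows $\{n-[nt],\dots,n-1\}$ converge to $t\Sigma$ in probability for each $t$, which is what the invariance principle requires. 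The remaining ingredients — negligibility of the coboundary, the Lindeberg bound, the passage through $\pi_\Delta$, and the identification of $\Sigma$ — are routine given the results already established.
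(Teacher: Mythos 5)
Your argument is correct and follows essentially the same route as the paper: decompose $\phi=m+\chi\circ f-\chi$ on the tower, discard the coboundary via Corollary~\ref{cor-chi}, prove the WIP for the martingale part via a martingale difference array FCLT whose conditional variances are ergodic averages of $UL(mm^T)$, identify $\Sigma$ by Corollary~\ref{cor-Sigma}, and push down through $\pi_\Delta$. The array/time-reversal bookkeeping and the sliding-window variance convergence you flag as the main obstacle (together with the degenerate case $\Sigma=0$, which you do not mention) are precisely what the paper's Theorem~\ref{thm-mda} in Appendix~\ref{sec-mda} packages, so that step is already covered.
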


\begin{proof}
By Corollary~\ref{cor-Sigma}, $\Sigma=\int_\Delta m\,m^T\,d\mu_\Delta
=\int_\Delta UL(m\,m^T)\,d\mu_\Delta$.
By the ergodic theorem, $n^{-1}\sum_{j=0}^{[nt]-1}\{UL(mm^T)\}\circ f^j\to t\Sigma$  a.e.\ as $n\to\infty$ for all $t>0$.
Hence we can apply Theorem~\ref{thm-mda} to the process
$M_n(t)=n^{-1/2}\sum_{j=0}^{[nt]-1} m\circ f^j$ to deduce that
$M_n\to_{\mu_\Delta}W$.

Next define
$\Phi_n(t)=n^{-1/2}\sum_{j=0}^{[nt]-1} \phi\circ f^j$.
For any $T>0$,
\begin{align*}
\SMALL \big|\sup_{t\in[0,T]}|\Phi_n(t)-M_n(t)|\big|_2
\le n^{-1/2}\big|\max_{1\le j\le nT}|\chi\circ f^j-\chi|\big|_2\to0,
\end{align*}
by Corollary~\ref{cor-chi}.
Hence $\Phi_n\to_{\mu_\Delta}W$.
Finally, $\pi_\Delta$ is a measure-preserving semiconjugacy, so the result follows.
\end{proof}

\begin{rmk}  As mentioned in the introduction, previous methods~\cite{Liverani96,MaxwellWoodroofe00,Tyran-Kaminska05} require special techniques for $p\le3$.
In particular, a sequence of martingale approximations is needed, whereas we work with a single martingale-coboundary decomposition.

Alternatively,~\cite{Gouezel07,MN05} obtained a single martingale-coboundary decomposition at the level of the induced map.  The resulting CLT/WIP can then be lifted back to the original system by~\cite{MT04,MZ15}.
\end{rmk}

Finally, we show how to recover a result of~\cite{Gouezel04a} where the WIP holds somewhat unexpectedly.  Our method of proof is significantly simpler than
in~\cite{Gouezel04a}.  On the other hand,~\cite{Gouezel04a} also obtained unexpectedly fast decay of correlations in this situation.

\begin{cor}[WIP with $p=1$]
Suppose that $\tau:Y\to\Z^+$ is the first return to $Y$ and that
$\supp v\subset Y$.  (We continue to suppose that $v$ is H\"older with mean zero.)
Then the WIP above holds for all $p\ge1$.
\end{cor}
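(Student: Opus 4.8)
The plan is to exploit the fact that, under these hypotheses, the induced observable $\phi'$ is bounded; this restores all of the integrability needed in Corollaries~\ref{cor-Sigma} and~\ref{cor-WIP} even though $\tau$ lies only in $L^1$.

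First I would observe that since $\tau$ is the first return time to $Y$ and $\supp v\subset Y$, the points $T^\ell y$ with $1\le\ell\le\tau(y)-1$ lie outside $Y$, so $\phi(y,\ell)=v(T^\ell y)=0$ there. Hence $\phi'(y)=\phi(y,0)=v(y)$ for $y\in Y$, giving $\|\phi'\|_\eta\le\|v\|_\eta$ and in particular $\phi'\in L^\infty$. Running the construction of Subsection~\ref{sec-primary} with this $\phi'$, Proposition~\ref{prop-iLip} and the exponential contraction of $P$ on mean-zero H\"older functions give $\chi'=\sum_{k\ge1}P^k\phi'\in L^\infty$ with $\|\chi'\|_\eta\ll\|v\|_\eta$, and then $m'=\phi'-\chi'\circ F+\chi'$ satisfies $|m'|_\infty\ll\|v\|_\eta$ (regardless of $p$, since $|\phi'|_\infty\le|v|_\infty$). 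Passing to $\Delta$ and using $\sum_{k=0}^{\ell-1}\phi(y,k)=v(y)1_{\{\ell\ge1\}}$, we get $\chi(y,\ell)=\chi'(y)+v(y)1_{\{\ell\ge1\}}$ and $m(y,\ell)=m'(y)1_{\{\ell=\tau(y)-1\}}$; thus $|\chi|_\infty\ll\|v\|_\eta$ and $|m|_\infty\ll\|v\|_\eta$, so both $m$ and $\chi$ lie in $L^2(\mu_\Delta)$.

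With this, the proof of Corollary~\ref{cor-WIP} carries over essentially verbatim. Propositions~\ref{prop-m} and~\ref{prop-mart} still give $\phi=m+\chi\circ f-\chi$, $m\in\ker L$, and the reverse martingale-difference structure, now with $m\in L^2$. Since $\chi$ is bounded, $\big|\max_{1\le k\le n}|\chi\circ f^k-\chi|\big|_2\le2|\chi|_\infty=O(1)$ takes over the role of Corollary~\ref{cor-chi}; together with $|S_n\phi|_2,|S_nm|_2\ll n^{1/2}\|v\|_\eta$ and $\int_\Delta S_nm\,S_nm^T\,d\mu_\Delta=n\int_\Delta mm^T\,d\mu_\Delta$ (from $m\in\ker L$), this yields $\Sigma=\int_\Delta mm^T\,d\mu_\Delta$ exactly as in Corollary~\ref{cor-Sigma}. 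Finally $m\in L^2$ together with $n^{-1}\sum_{j=0}^{[nt]-1}\{UL(mm^T)\}\circ f^j\to t\Sigma$ a.e.\ (ergodic theorem) lets us apply Theorem~\ref{thm-mda} to $M_n(t)=n^{-1/2}\sum_{j=0}^{[nt]-1}m\circ f^j$, and approximating $\Phi_n$ by $M_n$ via the $O(1)$ bound on $\chi\circ f^{[nt]}-\chi$, then pushing forward under the measure-preserving semiconjugacy $\pi_\Delta$, gives $W_n\to_\mu W$.

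The only delicate point, and the main thing to get right, is that Corollaries~\ref{cor-moment}--\ref{cor-WIP} are stated under $p\ge2$, so they cannot be invoked as black boxes; one must instead check that the sole places $p\ge2$ entered their proofs were to ensure $m\in L^2(\mu_\Delta)$ and $\big|\max_{k\le n}|\chi\circ f^k-\chi|\big|_2=o(n^{1/2})$, and that under the present hypotheses both hold trivially because $m$ and $\chi$ are bounded.
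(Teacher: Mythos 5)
Your proposal is correct and follows essentially the same route as the paper: both exploit that the first-return and support assumptions force $\phi(y,\ell)=0$ for $\ell\ge1$, so that $\chi$ and $m$ are bounded and the earlier arguments (martingale structure, covariance limit, Theorem~\ref{thm-mda}, approximation via the now $O(1)$ coboundary term) go through without needing $\tau\in L^p$ for $p\ge2$. Your explicit check of where $p\ge2$ entered the proofs of Corollaries~\ref{cor-moment}--\ref{cor-WIP} is exactly what the paper compresses into ``the arguments above go through (with numerous simplifications).''
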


\begin{proof}
The assumptions on $\tau$ and $v$ ensure that $\phi(y,\ell)=0$ for $\ell\ge1$.
But then the definition of $\chi$ reduces to
  $\chi(y, \ell) = \begin{cases}
    \chi'(y), & \ell=0 \\
    \chi'(y)+ \phi(y, 0), & \ell \ge 1
  \end{cases}$,
and it follows that $|\chi|_\infty \ll \|v\|_\eta$ and hence
that $|m|_\infty \ll \|v\|_\eta$.
The arguments above go through (with numerous simplifications).
\end{proof}

\begin{rmk} \label{rmk-phi}
The results in Subsections~\ref{sec-primary} and~\ref{sec-some} were proved for observables $\phi=v\circ\pi_\Delta$ where $v:\Lambda\to\R^d$ is H\"older and mean zero.
It is easy to check that the only properties of $\phi$ that were used are
(i) $\int_\Delta\phi\,d\mu_\Delta=0$,
(ii) $\phi\in L^\infty$,
(iii) $\|P\phi'\|_\eta<\infty$.
For such observables $\phi:\Delta\to\R^d$, all the results go through with
$\|v\|_\eta$ replaced by $|\phi|_\infty+\|P\phi'\|_\eta$.
\end{rmk}

\section{Secondary martingale-coboundary decomposition and the ASIP}
\label{sec-secondary}

In this section, we derive a secondary  martingale-coboundary decomposition
for nonuniformly expanding maps.  As an illustration of its utility, we obtain
an ASIP for nonuniformly expanding maps with improved error rates over those in the literature.

We continue to suppose that $v:\Lambda\to\R^d$ is H\"older with $\int_\Lambda v\,d\mu=0$
and that $\phi=v\circ\pi_\Delta:\Delta\to\R^d$.
In addition, we suppose that $p\ge2$.

Let 
\[
\phi = m + \chi \circ f - \chi,\quad  \phi' = m' + \chi' \circ f - \chi'
\]
be the decompositions from Subsection~\ref{sec-primary}. 
Define \(\breve{\phi} : \Delta \to \R^{d \times d}\),
\[
\SMALL \breve{\phi} = UL(m\,m^T) - \int_\Delta m\,m^T \, d\mu_\Delta,
\]
where $U$ and $L$ are the Koopman and transfer operators for $f$.

There are a number of limit laws in the literature that require control of
Birkhoff sums corresponding to $\breve{\phi}$.
As examples, we mention the martingale CLT/WIP~\cite{Billingsley99} (see Appendix~\ref{sec-mda}) and an ASIP for reverse martingale differences~\cite{CunyMerlevede15} discussed at the end of this section.  
A third application~\cite{AntoniouMprep}
 is to the estimate of convergence rates in the WIP.
To control the Birkhoff sums of $\breve{\phi}$, a martingale-coboundary decomposition for $\breve{\phi}$
is of great utility; this is the topic of the current section.

\begin{prop}
  \label{prop-iLip2}
$|\breve{\phi}|_\infty \le \|v\|_\eta^2$ and
  \(\|P\breve{\phi}'\|_\eta \leq C \|v\|_\eta^2\).
\end{prop}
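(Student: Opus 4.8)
The plan is to reduce the statement to two explicit identities --- one for $UL(mm^T)$ on the tower and one for $P\breve{\phi}'$ on $Y$ --- and then to run the proof of Proposition~\ref{prop-iLip} with the linear induced observable $\phi'$ replaced by the quadratic induced observable $m'm'^T$. First I would use the pointwise formulas~\eqref{eq-PL} together with the structure of $m$ (namely $m(y,\ell)=0$ for $\ell\le\tau(y)-2$ and $m(y,\tau(y)-1)=m'(y)$) to check that $L(mm^T)(y,\ell)=P(m'm'^T)(y)$ for $\ell=0$ and $=0$ for $\ell\ge1$, whence
\[
  UL(mm^T)(y,\ell)=\begin{cases}0,&\ell\le\tau(y)-2\\ P(m'm'^T)(Fy),&\ell=\tau(y)-1.\end{cases}
\]

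For the sup-norm bound, I would combine $|m'|\le|v|_\infty\tau+2|\chi'|_\infty\ll\|v\|_\eta\,\tau$ (using $\tau\ge1$ and $|\chi'|_\infty\le\|\chi'\|_\eta\ll\|v\|_\eta$ from Subsection~\ref{sec-primary}) with $\zeta(y_a)\le C\mu_Y(a)$ (Proposition~\ref{prop-zeta}) to get $|P(m'm'^T)|_\infty\le\sum_a\zeta(y_a)|m'(y_a)|^2\ll\|v\|_\eta^2\sum_a\mu_Y(a)\tau(a)^2=\|v\|_\eta^2\,|\tau|_2^2\ll\|v\|_\eta^2$, where the finiteness of $|\tau|_2$ is exactly where the standing assumption $p\ge2$ is used; together with $\big|\int_\Delta mm^T\,d\mu_\Delta\big|\le|m|_2^2\ll\|v\|_\eta^2$ this gives $|\breve{\phi}|_\infty\ll\|v\|_\eta^2$. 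Summing the displayed formula over a fibre gives $\breve{\phi}'(y)=P(m'm'^T)(Fy)-\tau(y)\,\Sigma$ with $\Sigma=\int_\Delta mm^T\,d\mu_\Delta$, and applying $P$ and using $\sum_a\zeta(y_a)=(P1)(y)=1$ collapses this to the clean identity $P\breve{\phi}'=P(m'm'^T)-(P\tau)\,\Sigma$. Since $|\Sigma|\ll\|v\|_\eta^2$ and $\|P\tau\|_\eta\ll1$ (Proposition~\ref{prop-iLip} with $v\equiv1$, for which $\phi'=\tau$), the whole proposition reduces to the estimate $\|P(m'm'^T)\|_\eta\ll\|v\|_\eta^2$.

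This last estimate I would prove by mimicking the proof of Proposition~\ref{prop-iLip}. For $x,y\in Y$ I would split $|(P(m'm'^T))(x)-(P(m'm'^T))(y)|$ into $\sum_a|\zeta(x_a)-\zeta(y_a)|\,|m'(x_a)|^2$, handled exactly as above via Proposition~\ref{prop-zeta} and $|m'|\ll\|v\|_\eta\tau$, and $\sum_a\zeta(y_a)\,|m'm'^T(x_a)-m'm'^T(y_a)|$. For the latter, writing $m'm'^T(x_a)-m'm'^T(y_a)=m'(x_a)(m'(x_a)-m'(y_a))^T+(m'(x_a)-m'(y_a))m'(y_a)^T$ reduces matters to bounding $|m'(x_a)-m'(y_a)|$; using $m'=\phi'-\chi'\circ F+\chi'$, the H\"older bound $\|\chi'\|_\eta\ll\|v\|_\eta$, the estimate~\eqref{eq-sbyz} for $\phi'$, and the expansion hypothesis~(2) (so that $d_\Lambda(x_a,y_a)\le\lambda^{-1}d_\Lambda(x,y)$ while $d_\Lambda(Fx_a,Fy_a)=d_\Lambda(x,y)$), one gets $|m'(x_a)-m'(y_a)|\ll\|v\|_\eta\,\tau(a)\,d_\Lambda(x,y)^\eta$. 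Combined with $|m'(x_a)|+|m'(y_a)|\ll\|v\|_\eta\tau(a)$ this makes the second sum $\ll\|v\|_\eta^2\,d_\Lambda(x,y)^\eta\sum_a\mu_Y(a)\tau(a)^2\ll\|v\|_\eta^2\,d_\Lambda(x,y)^\eta$, which with the sup-norm bound already proved gives $\|P(m'm'^T)\|_\eta\ll\|v\|_\eta^2$, completing the reduction.

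The only point requiring genuine care, as opposed to bookkeeping, is that $m'$ grows like $\tau$, so that both the size and the H\"older oscillation of the squared observable $m'm'^T$ on each atom $a$ carry a factor $\tau(a)^2$ rather than $\tau(a)$; the relevant sums $\sum_a\mu_Y(a)\tau(a)^2$ are finite precisely because $p\ge2$, and this is the one place the hypothesis is used. Everything else is a transcription of the primary-decomposition estimates with $\tau$ replaced by $\tau^2$.
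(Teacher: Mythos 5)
Your proposal is correct and follows essentially the same route as the paper: the pointwise computation of $L(mm^T)$ and $UL(mm^T)$, the resulting identity $P\breve{\phi}'=P(m'm'^T)-(P\tau)\int_\Delta mm^T\,d\mu_\Delta$, and then the H\"older/sup bounds for $P(m'm'^T)$ obtained by rerunning the proof of Proposition~\ref{prop-iLip} with $|m'|\ll\|v\|_\eta\,\tau$ and $|m'(x_a)-m'(y_a)|\ll\|v\|_\eta\,\tau(a)\,d_\Lambda(x,y)^\eta$, so that $\tau$ is replaced by $\tau^2$ and $p\ge2$ enters exactly through $\sum_a\mu_Y(a)\tau(a)^2<\infty$. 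Your explicit observations that $PU_F$ is the identity and that $\|P\tau\|_\eta\ll1$ comes from Proposition~\ref{prop-iLip} with $v\equiv1$ are just slightly more detailed versions of steps the paper leaves implicit.
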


\begin{proof}
Let $U_F$ and $P$ denote the Koopman and transfer operators on $L^1(Y)$ for $F$
(so $P$ is as in Subsection~\ref{subsec-NUE} and $U_Fv=v\circ F$).  
A calculation shows that
\[
(L(mm^T))(y,\ell)=\begin{cases} (P(m'm'^T))(y) & \ell=0
\\ 0 & \ell\ge1 \end{cases},
\]
and hence that
\[
(UL(mm^T))(y,\ell)=\begin{cases} 0 & \ell\le\tau(y)-2 \\ (U_FP(m'm'^T))(y) & \ell = \tau(y)-1 \end{cases}.
\]
  
By Proposition~\ref{prop-mr}, $|m|_2  \ll \|v\|_\eta$.
  Also, 
  $\|\chi'\|_\eta \ll \|v\|_\eta$ so
$|m'| \le |\phi'|+2|\chi'|_\infty\ll \tau\|v\|_\eta$.
It follows that 
$|P(m'm'^T)(y)|\le \sum_{a\in\alpha}\zeta(y_a)|m'm'^T(y_a)|
\ll \sum_{a\in\alpha}\mu_Y(a)\tau(a)^2\|v\|_\eta^2\ll \|v\|_\eta^2$.
Hence
\[ \SMALL
|\breve{\phi}|_\infty \le |UL(mm^T)|_\infty+
|\int_\Delta mm^T\,d\mu_\Delta|
\le |P(m'm'^T)|_\infty+|m|_2^2\ll \|v\|_\eta^2.
\]

  It remains to estimate \(\|P \breve{\phi}'\|_\eta \).  Now
$\breve{\phi}'(y) = \sum_{\ell=0}^{\tau(y)-1} \breve{\phi}(y,\ell)
=(U_FP(m'm'^T))(y)-\tau(y)\int_\Delta m\,m^T\,d\mu_\Delta$, so
\[
P\breve{\phi}'=P(m'm'^T)-(P\tau)|m|_2^2.
\]
In particular, $|P \breve{\phi}'|_\infty \le |P(m'm'^T)|_\infty+|P\tau|_\infty|m|_2^2
\ll \|v\|_\eta^2$.

  Let \(x,y \in Y\) and \(a \in \alpha\).
  Using equation \eqref{eq-sbyz},
  \begin{align*}
    | m'(x_a) - m'(y_a) | 
    &  \leq |\phi'(x_a) - \phi'(y_a)| + |\chi'(x) - \chi'(y)| 
    + |\chi'(x_a) - \chi'(y_a)|
    \\ & \ll \tau(a) \|v\|_\eta d_\Lambda(x,y)^\eta,
  \end{align*}
  and so
  \begin{align*}
      \big|m'(x_a) m'(x_a)^T - m'(y_a) m'(y_a)^T \big|
     & \leq  \big(|m'(x_a)|+ |m'(y_a)|\big) | m'(x_a) - m'(y_a) |
    \\ & \ll  \tau(a)^2\|v\|_\eta^2  d_\Lambda(x,y)^\eta.
  \end{align*}
  As in the proof of Proposition~\ref{prop-iLip}, 
  \begin{align*}
    |(P & (m'm'^T))(x) -  (P (m'm'^T))(y)|
     \\ & \le  \sum_{a \in \alpha} 
      |\zeta(x_a) - \zeta(y_a)||(m'm'^T)(x_a)|+
    \sum_{a \in \alpha} 
       \zeta(y_a)|(m'm'^T)(x_a)-(m'm'^T)(y_a)|
    \\ & \ll \|v\|_\eta^2 \sum_{a \in \alpha} 
    \mu_Y(a) \tau(a)^2 d_\Lambda(x,y)^\eta 
    \ll \|v\|_\eta^2 d_\Lambda(x,y)^\eta.
  \end{align*}  
Hence $|P(m'm'^T)|_\eta \ll \|v\|_\eta^2$.
A simpler computation shows that $|P\tau|_\eta\ll 1$.
It follows that $|P\breve{\phi}'|_\eta \ll \|v\|_\eta^2$,
and so $\|P\breve{\phi}'\|_\eta \ll \|v\|_\eta^2$.
\end{proof}

Proposition~\ref{prop-iLip2} together with Remark~\ref{rmk-phi}
allows us to write
\(\breve{\phi} = \breve{m} + \breve{\chi} \circ f - \breve{\chi}\),
as in Subsection~\ref{sec-primary}.
In particular,
\begin{align} \label{eq-second} \nonumber
& |\breve{m}|_p\le C \|v\|_\eta^2, \quad 
\max_{\,0\le k\le n}|\breve{\chi}\circ f^k|=o(n^{1/p})\;a.e., \\
  & \big| \max_{1 \leq k \leq n} |\breve{\chi} \circ f^k - \breve{\chi}| \big|_p 
  \leq C \|v\|_\eta^2\, n^{1/p}\; \text{for all $n\ge1$}.
\end{align}
We refer to \(\breve{\phi} = \breve{m} + \breve{\chi} \circ f - \breve{\chi}\)
as a {\em secondary martingale-coboundary decomposition}.

\begin{cor}
  \label{cor-second} 
    $\big| \max_{1\le k\le n}|\sum_{j=0}^{k-1}\breve{\phi}\circ f^j|\big|_p
            \leq C \|v\|_\eta^2\,  n^{1/2}$.
\end{cor}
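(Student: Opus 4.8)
The plan is to decompose the Birkhoff sum of $\breve\phi$ into a martingale part plus a telescoping coboundary, exactly as in the proof of Corollary~\ref{cor-moment} for $\phi$. By the secondary martingale-coboundary decomposition $\breve\phi = \breve m + \breve\chi\circ f - \breve\chi$ furnished by Proposition~\ref{prop-iLip2} and Remark~\ref{rmk-phi}, we have
\[
\SMALL\sum_{j=0}^{k-1}\breve\phi\circ f^j = \sum_{j=0}^{k-1}\breve m\circ f^j + \breve\chi\circ f^k - \breve\chi,
\]
so it suffices to bound each of the two terms on the right. For the coboundary term, $\big|\max_{1\le k\le n}|\breve\chi\circ f^k-\breve\chi|\big|_p \le C\|v\|_\eta^2\,n^{1/p} \le C\|v\|_\eta^2\,n^{1/2}$ directly from~\eqref{eq-second}, using $p\ge2$.

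For the martingale term, the key point is that $\breve m\in\ker L$ (this is part of the decomposition, just as $m\in\ker L$ in Proposition~\ref{prop-m}), so by Proposition~\ref{prop-mart} applied to $\breve m$ in place of $m$, for each fixed $n$ the family $\{\breve m\circ f^{n-j},\,\cG_j;\,1\le j\le n\}$ is a sequence of martingale differences with respect to $\cG_j=f^{-(n-j)}\cM$. Burkholder's inequality~\cite{Burkholder73} then gives
\[
\SMALL\big|\max_{k\le n}\big|\textstyle\sum_{j=1}^{k}\breve m\circ f^{n-j}\big|\big|_p \ll n^{1/2}\,|\breve m|_p \ll \|v\|_\eta^2\,n^{1/2},
\]
using $p\ge2$ and the bound $|\breve m|_p\le C\|v\|_\eta^2$ from~\eqref{eq-second}. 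Rewriting $\sum_{j=0}^{k-1}\breve m\circ f^j = \sum_{j=1}^{n}\breve m\circ f^{n-j} - \sum_{j=1}^{n-k}\breve m\circ f^{n-j}$ and taking the maximum over $k\le n$ converts this into a bound on $\big|\max_{k\le n}|\sum_{j=0}^{k-1}\breve m\circ f^j|\big|_p$ of the same order, at the cost of a factor $2$.

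Combining the two estimates via the triangle inequality in $L^p(\mu_\Delta)$ yields $\big|\max_{1\le k\le n}|\sum_{j=0}^{k-1}\breve\phi\circ f^j|\big|_p \le C\|v\|_\eta^2\,n^{1/2}$, as required. I do not anticipate a genuine obstacle here: this is the $p\ge2$ half of Corollary~\ref{cor-moment} applied verbatim to $\breve\phi$, with $\|v\|_\eta$ replaced by $\|v\|_\eta^2$ in accordance with Remark~\ref{rmk-phi} and Proposition~\ref{prop-iLip2}. The only mild care needed is to note that the result is stated for the partial sums of $\breve\phi$ on the tower $\Delta$ (not pushed down to $\Lambda$), so no semiconjugacy argument is involved, and that $|\breve\phi|_\infty\le\|v\|_\eta^2$ is already recorded in Proposition~\ref{prop-iLip2} in case an $L^\infty$ estimate on $\breve\chi$ is wanted en route; the $L^p$ chain above suffices on its own.
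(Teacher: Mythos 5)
Your proposal is correct and follows essentially the same route as the paper: the paper's proof is just the one-line remark that the statement ``follows from the argument for Corollary~\ref{cor-moment}'', which is exactly the scheme you spell out --- split $\sum_{j<k}\breve{\phi}\circ f^j$ via the secondary decomposition, apply Burkholder (in its $p\ge2$ form, giving $n^{1/2}|\breve{m}|_p$) together with Proposition~\ref{prop-mart} to the martingale part, and absorb the coboundary via~\eqref{eq-second} using $n^{1/p}\le n^{1/2}$. No gaps; the details you add (in particular $\breve{m}\in\ker L$ via Remark~\ref{rmk-phi} and the reindexing trick for the maximum) are precisely what the paper's cited argument supplies.
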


\begin{proof}
This follows from the argument for Corollary~\ref{cor-moment}.
\end{proof}

\begin{rmk} \label{rmk-second}
In a previous version of this paper, we obtained a similar decomposition for
$\breve{\phi}_1=m\,m^T-\int_\Delta m\,m^T\,d\mu_\Delta$.
The main difference is that $\breve{\phi}'_1=m'm'^T-\tau\int_\Delta m\,m^T\,d\mu_\Delta$.
The estimate for $\|P\breve{\phi}'_1\|_\eta$ is unchanged.
However, we obtain the inferior
estimate 
$|\breve{m}|_{p/2}\ll \|v\|_\eta^2$, resulting in a weaker estimate in
Corollary~\ref{cor-second}.

Since probabilistic results in the literature are typically stated in terms of the 
conditional variances $ULmm^T=\E(mm^T|f^{-1}\cM)$ (where $\E$ and $\cM$ are as in Proposition~\ref{prop-mart}),  we have chosen to omit the decomposition for $\breve{\phi}_1$ in this paper.
\end{rmk}

\begin{cor}[ASIP] \label{cor-ASIP}
Suppose that $d=1$.   
Define $\sigma^2=\lim_{n\to\infty}n^{-1}\int_\Lambda(\sum_{j=0}^{n-1}v\circ T^j)^2\,d\mu$ and suppose that $\sigma^2>0$.  Then there exists a probability space $\Omega$ supporting a sequence of random variables $\{S_n\}$ with the same joint distributions
as $\{\sum_{j=0}^{n-1}v\circ f^j\}$
and a sequence $\{Z_n\}$ of i.i.d.\ random variables with distribution $N(0,\sigma^2)$ such that almost everywhere as $n\to\infty$
\[
\SMALL \sup_{1\le k\le n} \big|S_k-\sum_{j=1}^k Z_j\big|=
\begin{cases} 
o\big((n\log\log n)^{1/2}\big)  & p=2, \\
o\big(n^{1/p}(\log n)^{1/2}\big)  & p\in(2,4), \\
O\big(n^{1/4}(\log n)^{1/2}(\log\log n)^{1/4}\big)  & p\ge4 
\end{cases}
\]
\end{cor}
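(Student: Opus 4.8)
The plan is to deduce the ASIP from the martingale-coboundary machinery of Sections~\ref{sec-NUE} and~\ref{sec-secondary} on the tower $\Delta$, and then push the result down to $\Lambda$ via the semiconjugacy $\pi_\Delta$. Recall from Subsection~\ref{sec-primary} that $\phi=v\circ\pi_\Delta=m+\chi\circ f-\chi$ on $\Delta$, so $\sum_{j=0}^{k-1}\phi\circ f^j=\sum_{j=0}^{k-1}m\circ f^j+\chi-\chi\circ f^k$. Since $|\chi|_p=o(n^{1/p})$ fails in general but Corollary~\ref{cor-chi} and Proposition~\ref{prop-chi} give $\big|\max_{1\le k\le n}|\chi\circ f^k-\chi|\big|_p\le C\|v\|_\eta n^{1/p}$, the coboundary term is negligible at every rate in the statement (note $n^{1/p}=o((n\log\log n)^{1/2})$ for $p=2$, $n^{1/p}=o(n^{1/p}(\log n)^{1/2})$ for $p\in(2,4)$, and $n^{1/p}\le n^{1/4}$ for $p\ge4$), possibly after a routine Borel--Cantelli argument along a subsequence combined with the maximal bound to control the gaps. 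So it suffices to establish the ASIP for the martingale sums $\sum_{j=0}^{k-1}m\circ f^j$ on $(\Delta,\mu_\Delta)$.

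For the martingale part I would invoke the ASIP for reverse martingale differences of Cuny--Merlev\`ede~\cite{CunyMerlevede15}. Their result requires two ingredients: a moment bound on the martingale differences, and control of the sum of conditional variances. The first is Proposition~\ref{prop-mr}: $|m|_p\le C\|v\|_\eta$ with $p\ge2$. The second is exactly where the secondary decomposition of Section~\ref{sec-secondary} enters: the conditional variances are $\breve\phi+\int_\Delta mm^T\,d\mu_\Delta=UL(mm^T)$ (with $d=1$ and $\sigma^2=\int_\Delta m^2\,d\mu_\Delta=\lim n^{-1}\int_\Lambda(S_nv)^2\,d\mu$ by Corollary~\ref{cor-Sigma}), and Corollary~\ref{cor-second} gives $\big|\max_{1\le k\le n}|\sum_{j=0}^{k-1}\breve\phi\circ f^j|\big|_p\le C\|v\|_\eta^2 n^{1/2}$, i.e.\ $\sum_{j=0}^{k-1}UL(mm^2)\circ f^j=k\sigma^2+O(k^{1/2})$ in $L^p$. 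One then reads off the error rate in~\cite{CunyMerlevede15} as a function of $p$: the exponent $1/p$ for $p\in(2,4)$ and the saturation at $1/4$ for $p\ge4$ are precisely the rates their theorem produces from an $L^p$ martingale-difference bound together with a $k^{1/2}$-bound on the variance sums, while the $p=2$ case uses the Strassen-type bound for square-integrable martingales (the classical $(n\log\log n)^{1/2}$ Skorokhod embedding rate). The logarithmic factors $(\log n)^{1/2}$ and $(\log\log n)^{1/4}$ are the standard overhead in their embedding.

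The last step is transfer to $\Lambda$. Because $\pi_\Delta:(\Delta,\mu_\Delta)\to(\Lambda,\mu)$ is a measure-preserving semiconjugacy and $\phi=v\circ\pi_\Delta$, the process $\{\sum_{j=0}^{k-1}v\circ T^j\}$ on $(\Lambda,\mu)$ has the same joint distribution as $\{\sum_{j=0}^{k-1}\phi\circ f^j\}$ on $(\Delta,\mu_\Delta)$; an ASIP is a statement about the joint law of the partial-sum process together with an auxiliary Gaussian sequence on an abstract probability space, so it passes verbatim from the tower to the base. (Strictly, one proves the ASIP on $\Delta$, obtaining $\Omega$, $\{S_n\}$ matching the $\Delta$-law, and $\{Z_n\}$; since the $\Delta$-law equals the $\Lambda$-law, the same $\Omega,\{S_n\},\{Z_n\}$ work for $\Lambda$.)

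**The main obstacle** is matching the precise error exponents in the statement to the hypotheses of~\cite{CunyMerlevede15}: one must check that their theorem, applied with martingale differences in $L^p$ and conditional-variance sums controlled to order $n^{1/2}$ in $L^p$, yields exactly the rates $o(n^{1/p}(\log n)^{1/2})$ for $p\in(2,4)$ and $O(n^{1/4}(\log n)^{1/2}(\log\log n)^{1/4})$ for $p\ge4$, with the correct interplay between the two hypotheses (in the regime $p\ge4$ it is the $n^{1/2}$-variance bound, not the moment bound, that is binding, which is why the rate saturates). A secondary, more routine, obstacle is the bookkeeping needed to show the coboundary term $\chi-\chi\circ f^k$ does not spoil the rate almost surely: Proposition~\ref{prop-chi} gives only an $L^p$ bound of order $n^{1/p}$, so one needs a Borel--Cantelli argument along a geometric subsequence $n_i=2^i$ together with the maximal inequality in Proposition~\ref{prop-chi} to fill in, which costs at most a logarithmic factor and is absorbed into the stated rates.
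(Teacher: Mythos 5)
Your overall architecture is the same as the paper's (primary decomposition $\phi=m+\chi\circ f-\chi$, Cuny--Merlev\`ede reverse-martingale ASIP for $\sum m\circ f^j$, secondary decomposition to control conditional variances, transfer via $\pi_\Delta$ after enlarging the probability space), but there is a genuine gap in how you control the error terms \emph{almost surely}. An ASIP is an a.e.\ statement, and both coboundary contributions must be bounded pointwise a.e., not in $L^p$. For the primary coboundary you propose Borel--Cantelli along $n_i=2^i$ from the $L^p$ maximal bound of Proposition~\ref{prop-chi}; Markov plus Borel--Cantelli then yields only $\max_{k\le n}|\chi\circ f^k-\chi|=O\big(n^{1/p}(\log n)^{1/p+\eps}\big)$ a.e. For $p=2$ this is $O\big(n^{1/2}(\log n)^{1/2+\eps}\big)$, which is \emph{not} $o\big((n\log\log n)^{1/2}\big)$, so the logarithmic overhead is not ``absorbed into the stated rates'' in that case and your argument does not deliver the $p=2$ statement. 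The paper sidesteps this entirely via Proposition~\ref{prop-chiae}: since $\tau\in L^p$, the ergodic theorem gives $\max_{0\le k\le n}|\chi\circ f^k|=o(n^{1/p})$ \emph{almost everywhere}, with no Borel--Cantelli and no log factors; this is the estimate you should quote.

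The same issue recurs, more seriously, in your treatment of the conditional variances for $p>2$. The corollaries of~\cite{CunyMerlevede15} you need (Corollaries~2.7 and~2.8) require an a.e.\ bound on $A_n=\sum_{j=0}^{n-1}\big(\E(m^2\circ f^j|\cG_{j+1})-\sigma^2\big)$, and for $p\ge4$ specifically a bound of order $(n\log\log n)^{1/2}$ to produce the stated rate $O\big(n^{1/4}(\log n)^{1/2}(\log\log n)^{1/4}\big)$. You only invoke the $L^p$ bound of Corollary~\ref{cor-second}; converting it by Borel--Cantelli gives at best $A_n=O\big(n^{1/2}(\log n)^{1/p+\eps}\big)$ a.e., which is weaker than $O\big((n\log\log n)^{1/2}\big)$ and would degrade the $p\ge4$ rate (an extra power of $\log n$ in place of $(\log\log n)^{1/4}$). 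The paper's route is different and is the point of the secondary decomposition: write $ULm^2-\sigma^2=\breve m+\breve\chi\circ f-\breve\chi$, apply the $p=2$ ASIP of~\cite{CunyMerlevede15} to the reverse martingale differences $\breve m\circ f^j$ to get the law of the iterated logarithm $\sum_{j<n}\breve m\circ f^j=O\big((n\log\log n)^{1/2}\big)$ a.e., and use the a.e.\ estimate $\breve\chi\circ f^n-\breve\chi=o(n^{1/p})$ from~\eqref{eq-second}; together these give $A_n=O\big((n\log\log n)^{1/2}\big)$ a.e., which is exactly what Corollaries~2.7 (with $b(n)\equiv1$) and~2.8 of~\cite{CunyMerlevede15} require. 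Replace your $L^p$-plus-Borel--Cantelli steps by these two a.e.\ estimates and the proof closes.
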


\begin{proof}
Since $m\in\ker L$, it follows as in Proposition~\ref{prop-mart}
that $\E(m\circ f^n|f^{-(n+1)}\cM)= \E(m|f^{-1}\cM)\circ f^n =0$ for all $n\ge0$.
That is, $\{m\circ f^n\}$ is a sequence of {\em reverse martingale differences}
with respect to the nonincreasing sequence $\{f^{-n}\cM\}$ of $\sigma$-algebras.

We apply results of~\cite{CunyMerlevede15} to deduce the conclusion of the corollary with
the sequence $\{\sum_{j=0}^{n-1}v\circ T^j\}$ replaced by 
the sequence $\{\sum_{j=0}^{n-1}m\circ f^j\}$.
Suppose that this is the case.
By Proposition~\ref{prop-chiae}, 
\[
\SMALL \max_{1\le k\le n}\big|\sum_{j=0}^{k-1}(\phi\circ f^j-m\circ f^j)\big|
\le \max_{1\le k\le n}|\chi\circ f^k-\chi|=o(n^{1/p})\;a.e.
\]
Enlarging the probability space $\Omega$
(cf.~\cite[p.~23]{PhilippStout75}), there exists a sequence $\{S_n'\}$ with the same joint distributions as $\{\sum_{j=0}^{n-1}\phi\circ f^j\}$ so that
$\SMALL \sup_{1\le k\le n} |S_k'-\sum_{j=1}^k Z_j|$
satisfies the desired estimates.
Finally, $\pi_\Delta$ is a measure-preserving semiconjugacy, so the joint distributions of $\{\sum_{j=0}^{n-1}v\circ T^j\}$ also coincide with those of $\{S_n'\}$.

It remains to prove the ASIP (with the appropriate error rates) for the sequence $\{\sum_{j=0}^{n-1}m\circ f^j\}$.
The case $p=2$ is immediate from~\cite[Corollary~2.5]{CunyMerlevede15}.

When $p>2$, we require almost sure estimates for the sequence
$A_n=\sum_{j=0}^{n-1}(\E(m^2\circ f^j|\cG_{j+1})-\sigma^2)$.  For this 
we use the secondary martingale-coboundary decomposition 
$ULm^2-\sigma^2 =\breve{m}+\breve{\chi}\circ f-\breve{\chi}$.
Then
\[
\SMALL A_n =
\sum_{j=0}^{n-1}((ULm^2)\circ f^j-\sigma^2)
=\sum_{j=0}^{n-1}\breve{m}\circ f^j + \breve{\chi}\circ f^n-\breve{\chi}.
\]

Since $\{\breve{m}\circ f^n\}$ is a sequence of $L^2$ reverse martingale differences, the result for $p=2$ implies an ASIP for $\{\sum_{j=0}^{n-1}\breve{m}\circ f^j\}$ with error rate
$o((n\log\log n)^{1/2})$.   This is sufficient to deduce the law of the iterated logarithm $\sum_{j=0}^{n-1}\breve{m}\circ f^j=O((n\log\log n)^{1/2})$ a.e.
Also $\breve{\chi}\circ f^n-\breve{\chi}=o(n^{1/p})$ a.e., so 
$A_n = O((n\log\log n)^{1/2})$ a.e.

For $p\in(2,4)$, the desired ASIP for $m$ now follows 
from~\cite[Corollary~2.7]{CunyMerlevede15} (taking $b(n)\equiv1$).
For $p\ge4$, the desired ASIP for $m$ follows 
from~\cite[Corollary~2.8]{CunyMerlevede15}.
\end{proof}

For the class of (Markovian) nonuniformly expanding maps as defined in Section~\ref{subsec-NUE}, with H\"older observables $v$, our results improve existing results in the literature.   The best previous result that we are aware of
is~\cite[Theorem~3.5]{CunyMerlevede15} who obtained the error rate $O(n^{1/4}(\log n)^{1/2}(\log\log n)^{1/4})$ for $p>6$ (this constraint is required to ensure that $|L^nv|_4$ decays faster than $n^{-5/4}$ for mean zero H\"older $v$ so that
condition~(3.2) in~\cite{CunyMerlevede15} is satisfied), whereas we require only that $p\ge4$.  
(For one-dimensional dynamical systems and certain classes of (unbounded) observables,~\cite{CunyMerlevede15} obtain much better results.)

\begin{examp} \label{ex-LSV}
Consider the map $T:[0,1]\to[0,1]$ of intermittent type~\cite{PomeauManneville80} studied by~\cite{LiveraniSaussolVaienti99}, namely
$T(x)=\begin{cases} x(1+2^\gamma x^\gamma) & x\in[0,\frac12)
\\ 2x-1 & x\in[\frac12,1] \end{cases}$.
It is standard that $T$ is a nonuniformly expanding map with absolutely continuous invariant probability measure $\mu$ for each $\gamma\in(0,1)$.
The inducing time $\tau$
lies in
$L^p$ if and only if $p<1/\gamma$.
Hence, we obtain the error rate $O(n^{1/4}(\log n)^{1/2}(\log\log n)^{1/4})$
for all mean zero H\"older observables~$v$ provided $\gamma<\frac14$; previously this was known only for $\gamma<\frac16$.
\end{examp}

\begin{examp}
We consider a
family of planar periodic dispersing billiards introduced by~\cite{ChernovZhang05b}.  The scatterers have smooth strictly convex boundaries with nonvanishing curvature, except that the curvature vanishes at two points.   Moreover, it is assumed that there is a periodic orbit that runs between the two flat points, and that the boundary near these flat points has the form $\pm(1+|x|^b)$ for some $b>2$.  

By~\cite{ChernovZhang05b}, quotienting out stable manifolds leads to a nonuniformly expanding map $T$ with inducing time $\tau$ lying in $L^p$
for all $p<(b+2)/(b-2)$.   Hence at the level of the quotient map we obtain the error rate $O(n^{1/4}(\log n)^{1/2}(\log\log n)^{1/4})$
for all mean zero H\"older observables $v$ provided $b<\frac{10}{3}$; previous results require $b<\frac{14}{5}$.  We conjecture that these results go over to the full (unquotiented map); this is the topic of future work.
\end{examp}

\begin{examp}
Bunimovich flowers~\cite{Bunimovich73} are billiards
where the boundary components of the billiard table are either dispersing, or focusing arcs of circles, subject to some technical constraints.
By~\cite{ChernovZhang05}, the quotient map $T$ (obtained by quotienting out stable manifolds) is nonuniformly expanding with inducing time $\tau\in L^p$ for all $p<3$.
Hence, at least at the level of the quotient map we obtain the error rate $o(n^{1/q})$ for all $q<3$.
\end{examp}

\section{Limit laws for families of nonuniformly expanding maps}
\label{sec-NUElimit}

In this section, we show how the martingale-boundary decompositions from the previous sections apply to Birkhoff sums of the type
$\sum_{j=0}^{n-1}v_n\circ T_n^j$ where the dynamical systems $T_n$ vary with $n$.

Suppose that $T_n:\Lambda_n\to\Lambda_n$, $n\ge0$, is a family of nonuniformly expanding maps
as defined in Section~\ref{subsec-NUE}, with absolutely continuous
ergodic $T_n$-invariant probability measures $\mu_n$.
Let $\tau_n:Y_n\to\Z^+$ and $F_n:Y_n\to Y_n$ be the corresponding inducing times and induced maps with ergodic $F_n$-invariant probability measures $\mu_{Y_n}$.
We say that $T_n:\Lambda_n\to\Lambda_n$ is a {\em uniform family of order $p\ge1$} if
\begin{itemize}

\parskip=-2pt
\item[(i)] $\sup_{n\ge0}\diam\Lambda_n<\infty$ and the constants $C_0,C_1\ge1$, $\lambda>1$, $\eta\in(0,1]$
can be chosen independent of $n\ge0$.
\item[(ii)]   The family $\{\tau_n^p,\, n\ge0\}$ is uniformly integrable.
(For this it suffices that
        $\sup_{n\ge0}\int_{Y_n}\tau_n^q\,d\rho<\infty$ for
some $q>p$.)
\end{itemize}

Let $v_n:\Lambda_n\to\R^d$ be a family of H\"older observables with
$\int_{\Lambda_n} v_n\,d\mu_n=0$.  We suppose that
$\sup_{n\ge0}\|v_n\|_\eta<\infty$.

Let $\Delta_n$ be the corresponding family of Young towers defined as in Section~\ref{subsec-NUE}, with maps $f_n:\Delta_n\to\Delta_n$, invariant probability measures $\mu_{\Delta_n}$ and semiconjugacies
$\pi_{\Delta_n}:\Delta_n\to \Lambda_n$.

Let $\phi_n=v_n\circ \pi_{\Delta_n}:\Delta_n\to\R^d$.
By the results from Section~\ref{sec-primary}, we have the primary martingale-coboundary decomposition
\begin{align} \label{eq-prim}
v_n\circ \pi_{\Delta_n}=\phi_n= m_n+\chi_n\circ f_n-\chi_n,
\end{align}
where $m_n,\,\chi_n$ satisfy the estimates in Propositions~\ref{prop-mr} and~\ref{prop-chi}  uniformly in $n$.

\begin{lemma} \label{lem-moment}  
If $p\le 2$, then
$\big|\max_{j\le n}|\sum_{k=0}^{j-1}v_n\circ T_n^k|\big|_p\le C\|v_n\|_\eta n^{1/p}$
and
$\big|\max_{j\le n}|\sum_{k=0}^{j-1}m_n\circ f_n^k|\big|_p\le C\|v_n\|_\eta n^{1/p}$
for all \mbox{$n\ge0$}.

If $p\ge 2$, then
$\big|\max_{j\le n}|\sum_{k=0}^{j-1}v_n\circ T_n^k|\big|_{2(p-1)}\le C\|v_n\|_\eta n^{1/2}$
for all \mbox{$n\ge0$}.
\end{lemma}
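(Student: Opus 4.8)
The plan is to repeat the proof of Corollary~\ref{cor-moment} in parallel for each $n$, the only genuinely new point being uniformity of the constants. By hypothesis~(i) the quantities $\diam\Lambda_n$, $C_0$, $C_1$, $\lambda$, $\eta$ are independent of $n$, and hypothesis~(ii) forces $\sup_{n\ge0}|\tau_n|_p<\infty$ (a uniformly integrable family is bounded in $L^1$; apply this to $\{\tau_n^p\}$). By the convention fixed in Section~\ref{sec-NUE}, the constants appearing in Propositions~\ref{prop-mr} and~\ref{prop-chi} depend only continuously on precisely these quantities, so they may be chosen independent of $n$; this is exactly what is recorded in the sentence preceding the lemma, where~\eqref{eq-prim} is asserted together with uniform bounds on $m_n$ and $\chi_n$. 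The constants in Burkholder's and Rio's inequalities depend only on the integrability exponent, so they too cause no difficulty.

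For $p\le 2$: applying Proposition~\ref{prop-mart} to $f_n$ shows that, for each fixed $n$, the family $\{m_n\circ f_n^{n-j};\,1\le j\le n\}$ is a martingale difference sequence with respect to the filtration $\cG_j=f_n^{-(n-j)}\cM$. I would then invoke Burkholder's inequality and the uniform bound $|m_n|_p\le C\|v_n\|_\eta$ from Proposition~\ref{prop-mr} to obtain $\big|\max_{j\le n}|\sum_{k=0}^{j-1}m_n\circ f_n^k|\big|_p\le C\|v_n\|_\eta\, n^{1/p}$ after the re-indexing of partial sums used in Corollary~\ref{cor-moment}; combining this with the (uniform) estimate of Proposition~\ref{prop-chi} applied to $\chi_n$ controls $\sum_{k=0}^{j-1}\phi_n\circ f_n^k$, and composing with the measure-preserving semiconjugacy $\pi_{\Delta_n}$ transfers the bound to $\sum_{k=0}^{j-1}v_n\circ T_n^k$.

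For $p\ge 2$: I would follow the Rio's-inequality argument of Corollary~\ref{cor-moment} with $X_j=\phi_n\circ f_n^{n-j}$. Proposition~\ref{prop-mart} (for $f_n$) gives $\sum_{k=j}^\ell\E(X_k|\cG_j)=m_n\circ f_n^{n-j}+\E(\chi_n\circ f_n^{n+1-\ell}|\cG_j)-\chi_n\circ f_n^{n-j}$ for $1\le j\le\ell\le n$, so the uniform estimates $|m_n|_p\le C\|v_n\|_\eta$ and $|\chi_n|_{p-1}\le C\|v_n\|_\eta$ of Proposition~\ref{prop-mr} bound $\max_{1\le j\le\ell\le n}|\sum_{k=j}^\ell\E(X_k|\cG_j)|_{p-1}$ uniformly in $n$. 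Since $|\phi_n|_\infty\le\|v_n\|_\eta$, Rio's inequality then yields $\big|\max_{j\le n}|\sum_{k=0}^{j-1}\phi_n\circ f_n^k|\big|_{2(p-1)}\le C\|v_n\|_\eta\, n^{1/2}$, which again transfers to $\sum_{k=0}^{j-1}v_n\circ T_n^k$ through $\pi_{\Delta_n}$.

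There is no new idea beyond Corollary~\ref{cor-moment}: every constant that enters is either one of the constants of Propositions~\ref{prop-mr}, \ref{prop-chi} and~\ref{prop-mart} (uniform under hypotheses~(i) and~(ii)) or a universal constant from Burkholder's or Rio's inequality. Consequently the only real point requiring care --- and what I would regard as the main obstacle --- is verifying that hypotheses~(i) and~(ii) genuinely force uniformity of all the Section~\ref{sec-NUE} constants; this reduces to the implication ``$\{\tau_n^p\}$ uniformly integrable $\Rightarrow$ $\sup_n|\tau_n|_p<\infty$'' together with the continuous-dependence convention already adopted there.
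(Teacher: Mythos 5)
Your proposal is correct and follows exactly the paper's route: the paper's proof is the one-line observation that the constant $C$ in Corollary~\ref{cor-moment} is independent of $n$ under hypotheses~(i) and~(ii), which is precisely what you verify (via uniform integrability of $\{\tau_n^p\}$ giving $\sup_n|\tau_n|_p<\infty$) before re-running the Burkholder/Rio arguments. Your write-up merely spells out the uniformity bookkeeping that the paper leaves implicit; nothing is missing.
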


\begin{proof}
This is immediate from Corollary~\ref{cor-moment} since the constant $C$ there is 
independent of $n$.~
\end{proof}

From now on, we suppose that $p\ge2$.
By Corollary~\ref{cor-Sigma}, we can define the family of covariance matrices
\begin{align}   \label{eq-Sigma}
\Sigma_n
& =\lim_{k\to\infty}k^{-1}\int_{\Lambda_n} 
\Big(\sum_{j=0}^{k-1}v_n\circ T_n^j\Big)
\Big(\sum_{j=0}^{k-1}v_n\circ T_n^j\Big)^T\,d\mu_n
= \int_{\Delta_n} m_n\,m_n^T\,d\mu_{\Delta_n}.
\end{align}

\begin{rmk} \label{rmk-unifSigma}
It follows from the proof of Corollary~\ref{cor-Sigma} that 
the convergence in~\eqref{eq-Sigma} is uniform in $n$.
\end{rmk}

By the results from Section~\ref{sec-secondary}, we have the secondary martingale 
coboundary decomposition
\begin{align} \label{eq-sec}
U_nL_n(m_nm_n^T)- \Sigma_n
=\breve{\phi}_n = \breve{m}_n+\breve{\chi}_n\circ f_n-\breve{\chi}_n,
\end{align}
where $U_n$ and $L_n$ are the Koopman and transfer operators for $f_n$.

\begin{prop} \label{prop-UI}
The family $\{|m_n|^2,\, n\ge0\}$ is uniformly integrable.
\end{prop}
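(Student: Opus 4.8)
The plan is to bound $|m_n|^2$ above by a fixed multiple of $\tau_n^2$ plus a constant and then invoke the uniform integrability of $\{\tau_n^p\}$ together with $p\ge2$. Recall from the construction in Subsection~\ref{sec-primary} (applied uniformly in $n$, as recorded after~\eqref{eq-prim}) that $m_n = \phi'_n + \chi'_n\circ F_n - \chi'_n$ on the top level of the tower and $m_n=0$ elsewhere, with $\|\chi'_n\|_\eta \ll \|v_n\|_\eta \ll 1$ uniformly in $n$, and $|\phi'_n|\le |v_n|_\infty \tau_n$. Hence pointwise $|m_n| \le |\phi'_n| + 2|\chi'_n|_\infty \ll (1+\tau_n)\|v_n\|_\eta$ on $Y_n$, which gives $|m_n|^2 \ll (1+\tau_n^2)$ with a constant independent of $n$. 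This is exactly the estimate already used in Proposition~\ref{prop-mr} and Proposition~\ref{prop-iLip2}, so nothing new is needed there.

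The first step is therefore to transfer this bound from $Y_n$ to $\Delta_n$: since $m_n$ is supported on $\Delta_{n}\cap\{\ell=\tau_n-1\}$, which projects bijectively onto $Y_n$, and $\mu_{\Delta_n}=\mu_{Y_n}\times\{\mathrm{counting}\}/|\tau_n|_1$, one has for any Borel $E\subset\R$ that $\int_{\{|m_n|^2\in E\}}|m_n|^2\,d\mu_{\Delta_n} = |\tau_n|_1^{-1}\int_{\{|m'_n|^2\in E\}}|m'_n|^2\,d\mu_{Y_n}$, where I write $m'_n$ for the value of $m_n$ on the top level. Note $|\tau_n|_1 \ge 1$, so this factor only helps. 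Thus it suffices to show $\{|m'_n|^2,\,n\ge0\}$ is uniformly integrable on $(Y_n,\mu_{Y_n})$.

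The second step handles the change of reference measure: the $\mu_{Y_n}$ are absolutely continuous with respect to the fixed-type reference measures $\rho_n$ with $d\mu_{Y_n}/d\rho_n\in L^\infty$, but the cleanest route is to stay with $\mu_{Y_n}$ directly. From $|m'_n|^2 \ll 1 + \tau_n^2$ uniformly, given $\epsilon>0$ pick $M$ so large that $\sup_n \int_{\{\tau_n^2 > M\}} \tau_n^2\,d\mu_{Y_n} < \epsilon$; this is possible because $\{\tau_n^p\}$ uniformly integrable and $p\ge2$ imply $\{\tau_n^2\}$ uniformly integrable (uniform integrability is preserved under taking a power $\le 1$ of a uniformly integrable family, via $\tau_n^2 = (\tau_n^p)^{2/p}$ and the de la Vallée-Poussin criterion, or more simply $\int_{\{\tau_n^2>M\}}\tau_n^2 \le M^{1-p/2}\int \tau_n^p \to 0$ as $M\to\infty$). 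Then on $\{|m'_n|^2 > K\}$ with $K$ chosen large in terms of $M$ and the uniform constant, one has $\tau_n^2 > M$, so $\int_{\{|m'_n|^2>K\}}|m'_n|^2\,d\mu_{Y_n} \ll \int_{\{\tau_n^2>M\}}(1+\tau_n^2)\,d\mu_{Y_n} \ll \epsilon$. This gives uniform integrability of $\{|m'_n|^2\}$ and hence of $\{|m_n|^2\}$.

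I do not anticipate a genuine obstacle here — the only mild care needed is the bookkeeping that uniform integrability of $\{\tau_n^p\}$ with $p\ge2$ descends to $\{\tau_n^2\}$ (immediate from $\int_{\{\tau_n>a\}}\tau_n^2 \le a^{2-p}\int\tau_n^p$ since $2-p\le0$), and that the tower-to-base identity above does not spoil uniformity, which it does not because $|\tau_n|_1\ge1$. Everything else is a direct application of the pointwise bound $|m_n|\ll(1+\tau_n)\|v_n\|_\eta$ already established in the uniform versions of Propositions~\ref{prop-mr} and~\ref{prop-iLip2}, together with the standing hypothesis $\sup_n\|v_n\|_\eta<\infty$.
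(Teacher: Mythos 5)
Your proof is correct and takes essentially the same route as the paper: bound $|m_n'|\le|\phi_n'|+2|\chi_n'|_\infty\ll 1+\tau_n$ uniformly, deduce uniform integrability of $\{|m_n'|^2\}$ from that of $\{\tau_n^2\}$ (condition (ii) with $p\ge2$), and transfer from $(Y_n,\mu_{Y_n})$ to $(\Delta_n,\mu_{\Delta_n})$ exactly as in the computation of Proposition~\ref{prop-mr}, using that $m_n$ vanishes off the top level and $|\tau_n|_1\ge1$. Two cosmetic slips, neither affecting the argument: the decomposition reads $\phi_n'=m_n'+\chi_n'\circ F_n-\chi_n'$, so $m_n'=\phi_n'-\chi_n'\circ F_n+\chi_n'$ (your bound only uses $|\phi_n'|+2|\chi_n'|_\infty$, so this is harmless), and the estimate $M^{1-p/2}\int\tau_n^p$ degenerates at $p=2$, where uniform integrability of $\{\tau_n^2\}$ is just the hypothesis itself.
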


\begin{proof}
We start from the primary decomposition~\eqref{eq-prim}, with
$\phi_n'=m_n'+\chi_n'\circ F_n-\chi_n'$ (as in Section~\ref{sec-primary}).

Since $|v_n|_\infty$ is bounded, it is immediate from condition~(ii) and
the definition
$\phi_n'=\sum_{j=0}^{\tau-1}v_n\circ \pi_{\Delta_n}\circ f_n^j$ that
$\{|\phi_n'|^2\}$ is uniformly integrable.
Next,
$|\chi_n'|_\infty$ is bounded and hence $\{|m_n'|^2\}$ is uniformly integrable.
It follows from the proof of Proposition~\ref{prop-mr} that
the uniform integrability of $\{|m_n'|^2\}$ is inherited by $\{|m_n|^2\}$.
\end{proof}

Let $W_n(t)=n^{-1/2}\sum_{j=0}^{[nt]-1}v_n\circ T_n^j$.

\begin{prop} \label{prop-limit}
Suppose that $\lim_{n\to\infty}\Sigma_n=\Sigma$ where $\Sigma\in\R^{d\times d}$.
Then $W_n\to_{\mu_n} W$ in $D([0,\infty),\R^d)$
where $W$ is Brownian motion with covariance $\Sigma$.
\end{prop}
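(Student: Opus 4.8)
The plan is to lift everything to the Young towers $\Delta_n$, prove the WIP there for the martingale part $M_n(t) = n^{-1/2}\sum_{j=0}^{[nt]-1} m_n\circ f_n^j$ using a martingale functional CLT that allows the array to vary with $n$, and then transfer back down via the semiconjugacy $\pi_{\Delta_n}$, controlling the coboundary error uniformly. Write $\Phi_n(t) = n^{-1/2}\sum_{j=0}^{[nt]-1}\phi_n\circ f_n^j$ on $(\Delta_n,\mu_{\Delta_n})$, so that $W_n = \Phi_n\circ\pi_{\Delta_n}^{-1}$ in distribution since $\pi_{\Delta_n}$ is measure-preserving. By the primary decomposition~\eqref{eq-prim},
\[
\SMALL \big|\sup_{t\in[0,T]}|\Phi_n(t)-M_n(t)|\big|_1 \le n^{-1/2}\big|\max_{1\le k\le nT}|\chi_n\circ f_n^k-\chi_n|\big|_1,
\]
and by Proposition~\ref{prop-chi} (with $q$ chosen large and using condition~(ii)) the right-hand side is $o(1)$ uniformly in $n$; so it suffices to prove $M_n\to_{\mu_n}W$ (in the sense of $\to_{\mu_n}$ defined in the Notation paragraph).

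For the martingale part, recall from Proposition~\ref{prop-mart} that for each fixed $n$ the $\{m_n\circ f_n^{N-j}\}$ form a martingale difference array with respect to $\cG_j^{(n)} = f_n^{-(N-j)}\cM_n$. The tool I would use is a martingale invariance principle for triangular arrays (of the kind in Appendix~\ref{sec-mda}, cf.~\cite{Billingsley99}), whose two hypotheses are: (a) a conditional Lindeberg condition, and (b) convergence of the conditional variances, $n^{-1}\sum_{j=0}^{[nt]-1}\E(m_n m_n^T\circ f_n^j \mid f_n^{-1}\cM_n)\circ f_n^j \to t\Sigma$ in probability. Condition~(a) follows from Proposition~\ref{prop-UI}: uniform integrability of $\{|m_n|^2\}$ plus the boundedness $|m_n|^2 \le C\|v_n\|_\eta^2\tau_n^2$ feeding into the return-time uniform integrability gives the Lindeberg bound uniformly in $n$. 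Condition~(b) is where the secondary decomposition~\eqref{eq-sec} enters: since $U_nL_n(m_nm_n^T) - \Sigma_n = \breve\phi_n = \breve m_n + \breve\chi_n\circ f_n - \breve\chi_n$, the partial sum $\sum_{j=0}^{[nt]-1}(U_nL_n(m_nm_n^T))\circ f_n^j - [nt]\Sigma_n$ equals $\sum_{j=0}^{[nt]-1}\breve m_n\circ f_n^j + \breve\chi_n\circ f_n^{[nt]} - \breve\chi_n$. By Corollary~\ref{cor-second} the first term is $O_{L^1}(n^{1/2})\cdot\|v_n\|_\eta^2$, hence $n^{-1}$ times it is $o_{L^1}(1)$; the coboundary remainder is $o(n^{1/q})$ and after dividing by $n$ is negligible; and $\Sigma_n\to\Sigma$ by hypothesis. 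This gives~(b) in $L^1$, hence in probability, uniformly in $n$.

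The main obstacle is bookkeeping the uniformity: the classical martingale FCLT is stated for a single array, so I need a version where the underlying probability space $(\Delta_n,\mu_{\Delta_n})$ and the array itself both vary with $n$, and I must check that the hypotheses hold in the doubly-indexed sense (for every subsequence, with the same limit). This is precisely the content of the $\to_{\mu_n}$ formalism and of the appendix result; the work is to verify that the $L^1$ estimates above, together with tightness in $D([0,\infty),\R^d)$ (which follows from the moment bound $|M_n(t)-M_n(s)|_2^2 \ll |t-s|$ coming from $m_n\in\ker L_n$, uniformly in $n$), package correctly. Once $M_n\to_{\mu_n}W$ is established, combining with the coboundary estimate gives $\Phi_n\to_{\mu_n}W$, and applying the measure-preserving semiconjugacy $\pi_{\Delta_n}$ yields $W_n\to_{\mu_n}W$, completing the proof. \qed
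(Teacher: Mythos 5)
Your proposal is correct and follows essentially the same route as the paper: control the coboundary difference $\Phi_n-M_n$ via Proposition~\ref{prop-chi} together with uniform integrability of $\tau_n^2$, verify the hypotheses of the array martingale WIP (Theorem~\ref{thm-mda}) using Proposition~\ref{prop-UI} for the Lindeberg/uniform-integrability condition and the secondary decomposition with Corollary~\ref{cor-second} plus $\Sigma_n\to\Sigma$ for the conditional variances, and push the result down through the measure-preserving semiconjugacy $\pi_{\Delta_n}$. The only small inaccuracy is your aside that tightness follows from the bound $|M_n(t)-M_n(s)|_2^2\ll|t-s|$ (second moments alone do not give tightness in $D$); this is harmless here because tightness is already part of the conclusion of the appendix theorem you invoke, exactly as in the paper.
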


\begin{proof}
Define processes 
$\Phi_n(t)=n^{-1/2}\sum_{j=0}^{[nt]-1}\phi_n\circ f_n^j$,
$M_n(t)=n^{-1/2}\sum_{j=0}^{[nt]-1}m_n\circ f_n^j$.

By Proposition~\ref{prop-UI}, the family $\{|m_n|^2,\,n\ge0\}$ is uniformly integrable.
Next,
  \begin{align*}
n^{-1}\sum_{j=0}^{[nt]-1}\{U_nL_n & (m_nm_n^T)\}  \circ f_n^j  \,  -\,t\Sigma
  = 
n^{-1}\sum_{j=0}^{[nt]-1}\breve{\phi}_n\circ f_n^j
+ n^{-1}[nt]\Sigma_n-t\Sigma \to_{\mu_{\Delta_n}}0,
\end{align*}
by Corollary~\ref{cor-second}.
By Theorem~\ref{thm-mda}, $M_n\to_{\mu_{\Delta_n}} W$.

Let $T>0$.  
Since $T_n$ is a uniform family, 
$\big|\max_{1\le k\le nT}|\chi_n\circ f_n^k-\chi_n|\big|_2=o(n^{1/2})$
by Proposition~\ref{prop-chi}.
Also,
\begin{align*}
\SMALL\sup_{t\in[0,T]}|\Phi_n(t)-M_n(t)| & \SMALL\le
n^{-1/2}\max_{1\le k\le nT}|\sum_{j=0}^{k-1}(\phi_n\circ f_n^j-m_n\circ f_n^j)|
\\ &\SMALL \le 
n^{-1/2}\max_{1\le k\le nT}|\chi_n\circ f_n^k-\chi_n|.
\end{align*}
Hence
$\lim_{n\to\infty}\big|\sup_{t\in[0,T]}|\Phi_n(t)-M_n(t)|\big|_2=0$
for each $T>0$. It follows that
\mbox{$\Phi_n\to_{\mu_{\Delta_n}} W$}.
Also, $\pi_{\Delta_n}$ is a measure-preserving semiconjugacy for each $n$, 
so 
$W_n\to_{\mu_n}W$.
\end{proof}

Define $\cW\subset D([0,\infty),\R^d)$ to be the set of weak limits of $\{W_n,\,n\ge0\}$ and let
$\cS\subset\R^{d\times d}$ be the set of limit points of $\{\Sigma_n,\,n\ge0\}$.
By Proposition~\ref{prop-UI}, $\{\Sigma_n,\,n\ge0\}$ is bounded and hence
$\cS\neq\emptyset$.

\begin{thm}
  \label{thm-WIP}
(i) $\{W_n,\,n\ge0\}$ is tight, (ii)
$W\in \cW$ if and only if $W$ is a Brownian motion 
with covariance matrix in $\cS$.
\end{thm}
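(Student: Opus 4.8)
The plan is to reduce the theorem to Proposition~\ref{prop-limit} by a subsequence argument together with a compactness statement for the covariance matrices. First I would prove tightness, part~(i). The processes $W_n$ are the images under the semiconjugacies $\pi_{\Delta_n}$ of the processes $\Phi_n(t)=n^{-1/2}\sum_{j=0}^{[nt]-1}\phi_n\circ f_n^j$, which in turn are uniformly close in $L^2$ to the martingale processes $M_n(t)=n^{-1/2}\sum_{j=0}^{[nt]-1}m_n\circ f_n^j$ by the estimate $\big|\sup_{t\in[0,T]}|\Phi_n(t)-M_n(t)|\big|_2\to0$ established in the proof of Proposition~\ref{prop-limit} (via Proposition~\ref{prop-chi}, using that $T_n$ is a uniform family). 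So it suffices to show $\{M_n\}$ is tight. By Proposition~\ref{prop-UI}, $\{|m_n|^2\}$ is uniformly integrable, and by Corollary~\ref{cor-second} the normalised conditional variances $n^{-1}\sum_{j=0}^{[nt]-1}\{U_nL_n(m_nm_n^T)\}\circ f_n^j-n^{-1}[nt]\Sigma_n\to_{\mu_{\Delta_n}}0$; since $\{\Sigma_n\}$ is bounded, these hypotheses are exactly what is needed to invoke the martingale tightness criterion (the same input to Theorem~\ref{thm-mda} used in Proposition~\ref{prop-limit}, now without assuming $\Sigma_n$ converges). This gives tightness of $\{M_n\}$, hence of $\{\Phi_n\}$, hence of $\{W_n\}$.

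For part~(ii), the "if" direction is immediate: if $\Sigma\in\cS$ then $\Sigma=\lim_{k\to\infty}\Sigma_{n_k}$ along some subsequence $n_k\to\infty$, and applying Proposition~\ref{prop-limit} to the family $T_{n_k}$ (which is again a uniform family of order $p$, with the same uniform constants) yields $W_{n_k}\to_{\mu_{n_k}}W$ where $W$ is Brownian motion with covariance $\Sigma$; hence $W\in\cW$. For the "only if" direction, suppose $W\in\cW$, so $W_{n_k}\to_w W$ along some subsequence. Since $\{\Sigma_{n_k}\}$ is bounded in $\R^{d\times d}$, pass to a further subsequence $n_{k_j}$ along which $\Sigma_{n_{k_j}}\to\Sigma$ for some $\Sigma\in\cS$. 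By Proposition~\ref{prop-limit} applied along this sub-subsequence, $W_{n_{k_j}}\to_{\mu_{n_{k_j}}}W'$ where $W'$ is Brownian motion with covariance $\Sigma$. But $W_{n_{k_j}}$ also converges weakly to $W$, so $W=W'$ in distribution; thus $W$ is a Brownian motion with covariance $\Sigma\in\cS$.

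The main obstacle, such as it is, is the tightness argument in part~(i): unlike Proposition~\ref{prop-limit}, we cannot assume $\Sigma_n$ converges, so we must be careful that the martingale central limit machinery underlying Theorem~\ref{thm-mda} (Appendix~\ref{sec-mda}) yields tightness from uniform integrability of $\{|m_n|^2\}$ plus convergence of the conditional variances to a \emph{bounded} (but possibly subsequence-dependent) limit, rather than to a fixed $t\Sigma$. This is standard — the Lindeberg-type and conditional-variance conditions each imply a modulus-of-continuity bound that gives tightness independently of the identification of the limit — but it is the one place where one must check that the cited theorem is being used in the correct generality. Everything else is a routine combination of the primary and secondary decompositions with the boundedness of $\{\Sigma_n\}$ recorded just before the statement.
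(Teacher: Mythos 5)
Your part~(ii) is exactly the paper's argument: given a weak limit (or a target $\Sigma\in\cS$), pass to a subsequence along which $\Sigma_{n_k}$ converges, apply Proposition~\ref{prop-limit} to that subfamily (which is again uniform with the same constants), and identify the limit by uniqueness of weak limits. Where you diverge is part~(i). The paper does not revisit the martingale machinery at all: since $\{\Sigma_n\}$ is bounded (Proposition~\ref{prop-UI}), every subsequence of $\{W_n\}$ has a subsubsequence along which $\Sigma_{n_k}\to\Sigma$, hence by Proposition~\ref{prop-limit} a weakly convergent subsubsequence; relative compactness of the laws then gives tightness by the converse half of Prokhorov's theorem (legitimate in view of Remark~\ref{rmk-sep}). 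In other words, the subsequence argument you already run in part~(ii) delivers tightness for free, so your separate tightness verification is redundant. It is also the one place where your write-up overreaches: Theorem~\ref{thm-mda} as stated requires the conditional variance processes to converge to a fixed $t\Sigma$, so "the same input \dots now without assuming $\Sigma_n$ converges" is not literally licensed by the appendix. Your sketch can be repaired, either by quoting a genuine martingale tightness criterion (Lindeberg condition plus C-tightness of the conditional variance processes, which holds here since by Corollary~\ref{cor-second} they differ from the uniformly Lipschitz paths $t\mapsto n^{-1}[nt]\Sigma_n$ by $o_{\mu_{\Delta_n}}(1)$ and $\{\Sigma_n\}$ is bounded), or by the subsequence trick --- but the latter is precisely the paper's proof, and the former buys nothing here beyond what the softer compactness argument already gives. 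So: correct in substance, with part~(i) taking a heavier route whose appeal to Theorem~\ref{thm-mda} should either be replaced by the Prokhorov-type argument or backed by an explicitly more general tightness criterion.
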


\begin{proof}
Given a covariance matrix $\Sigma\in\R^{d\times d}$,
let $W(\Sigma)$ denote Brownian motion with covariance $\Sigma$.

Since $\{\Sigma_n\}$ is bounded,
for any subsequence $W_{n_k}$ we can pass to a subsubsequence along which 
$\Sigma_{n_k}\to\Sigma$ for some $\Sigma\in\cS$.
By Proposition~\ref{prop-limit}, we then have that $W_{n_k}\to_{\mu_{n_k}}W(\Sigma)$.  This shows that $\{W_n\}$ is tight and that all weak limits
have the form $W(\Sigma)$, $\Sigma\in\cS$.

Conversely, 
if $\lim_{k\to\infty}\Sigma_{n_k}=\Sigma$ for some subsequence $n_k$,
then $W_{n_k}\to_{\mu_n}W(\Sigma)$ by Proposition~\ref{prop-limit}.
\end{proof}

\begin{cor}  Suppose in Theorem~\ref{thm-WIP} that $W_{n_k}\to_{\mu_{n_k}}W$ as $k\to\infty$.  Then
$\lim_{k\to\infty}n_k^{-q/2}\int_{\Lambda_{n_k}}|\sum_{j=0}^{n_k-1}v_{n_k}\circ T_{n_k}^j|^q\,d\mu_{n_k}=\E|W(1)|^q$ for all $q<2(p-1)$.
\end{cor}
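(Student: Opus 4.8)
The plan is to combine the weak convergence $W_{n_k}\to_{\mu_{n_k}}W$ with uniform integrability of the relevant moments, since convergence in distribution plus uniform integrability of the $q$-th powers gives convergence of $q$-th moments. Evaluating at $t=1$, we have $W_{n_k}(1)=n_k^{-1/2}\sum_{j=0}^{n_k-1}v_{n_k}\circ T_{n_k}^j$ and $W_{n_k}(1)\to_w W(1)$; so it suffices to show that $\{|W_{n_k}(1)|^q,\,k\ge1\}$ is uniformly integrable for every $q<2(p-1)$, and then invoke the standard fact (e.g.\ \cite[Theorem~3.5]{Billingsley99}) that this yields $\E|W_{n_k}(1)|^q\to\E|W(1)|^q$.

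The uniform integrability will follow from a uniform higher-moment bound. Fix $q<2(p-1)$ and choose $r$ with $q<r\le 2(p-1)$. By Lemma~\ref{lem-moment} (the $p\ge2$ case), $\big|\sum_{j=0}^{n-1}v_n\circ T_n^j\big|_{2(p-1)}\le C\|v_n\|_\eta n^{1/2}\le C' n^{1/2}$, using $\sup_n\|v_n\|_\eta<\infty$; hence $\big|\sum_{j=0}^{n-1}v_n\circ T_n^j\big|_r\le C'' n^{1/2}$ as well, so $\big| |W_n(1)|^q \big|_{r/q}=\big|W_n(1)\big|_r^q\le (C'')^q$ is bounded uniformly in $n$. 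A family bounded in $L^{r/q}$ with $r/q>1$ is uniformly integrable, which gives exactly what is needed.

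With these pieces in hand the argument is short: first I would note $W_{n_k}(1)\to_w W(1)$ is a consequence of $W_{n_k}\to_{\mu_{n_k}}W$ (evaluation at a fixed time is continuous on $D([0,\infty),\R^d)$ for paths continuous at that time, and $W$ has continuous paths a.s.), then record the uniform $L^r$-bound on $W_n(1)$ from Lemma~\ref{lem-moment}, deduce uniform integrability of $\{|W_{n_k}(1)|^q\}$, and conclude $\lim_{k\to\infty}\E_{\mu_{n_k}}|W_{n_k}(1)|^q=\E|W(1)|^q$, which is the claimed identity once one writes out $W_{n_k}(1)$ explicitly.

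I do not expect a serious obstacle here; the only mild subtlety is the strictness of the inequality $q<2(p-1)$, which is precisely what allows the choice $q<r\le 2(p-1)$ and hence the extra room needed to pass from a uniform $L^r$ bound to uniform integrability of the $q$-th powers. (If one only had boundedness in $L^q$ itself, uniform integrability would not follow.) Everything else is standard measure theory combined with the moment estimate already established.
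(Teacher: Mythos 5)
Your proof is correct and follows essentially the same route as the paper: the paper simply invokes Lemma~\ref{lem-moment} together with the standard fact (cited as \cite[Lemma~2.1(e)]{MTorok12}) that weak convergence plus a uniform moment bound of higher order gives convergence of $q$-th moments, which is exactly the argument you write out in detail via uniform integrability and \cite[Theorem~3.5]{Billingsley99}.
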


\begin{proof}  This follows immediately from Lemma~\ref{lem-moment} (cf.~\cite[Lemma~2.1(e)]{MTorok12}).
\end{proof}

\begin{rmk}  It is not difficult to formulate conditions under which the weak limits of $\{W_n\}$ are nondegenerate.
One possibility is to suppose that there is a limiting nonuniformly expanding map $T_\infty$ with corresponding observable $v_\infty$ and covariance matrix $\Sigma_\infty$.  Typically $\det\Sigma_\infty>0$.  Under certain conditions (see for example Section~\ref{sec-homog-uniform}), it can be shown that $W_n\to_{\mu_n}W$ where $W$ is Brownian motion with covariance $\Sigma_\infty$.

An alternative mechanism for nondegenerate limits is the following.  
Suppose that $d=1$.
Recall that 
$|\chi'_n|_2\le|\chi'_n|_\infty\le 
C\|v_n\|_\eta\le 
C\|v_n\|_\eta|\tau_n|_1$,
where $C>0$ is a constant depending only on the induced maps $F_n$.  Hence
\[
\sigma_n=|m_n|_2=|m_n'|_2/|\tau_n|_1\ge (|\phi'_n|_2-2|\chi'_n|_2)/|\tau_n|_1
\ge |\phi_n'|_2/|\tau_n|_1\,-\,C\|v_n\|_\eta.
\]

If we arrange that $|\tau_n|_2\ge 2C|\tau_n|_1$ for all $n$, then it follows that
\[
\sigma_n\ge C(2|\phi'_n|_2/|\tau_n|_2\,-\,\|v_n\|_\eta).
\]

Suppose for simplicity that $F_n=T_n^{\tau_n}$ is the first return map for each $n$.   
Let $k_n$ be largest such that $\sum_{j=1}^{k_n} j\mu_{Y_n}(\tau_n=j)>\sum_{j=k_n+1}^\infty j\mu_{Y_n}(\tau_n=j)$,
There is a unique observable $v_n:\Lambda_n\to\R$ taking values $\pm1$ such that $\phi'_n(y)=\tau_n(y)$ if $\tau_n(y)\le k_n$ and 
$\phi'_n(y)=-\tau_n(y)$ if $\tau_n(y)> k_n$.
By construction $\int_{Y_n}\phi_n'\,d\mu_{Y_n}\approx0$ and $|\phi_n'|_2=|\tau_n|_2$.
A slight modification produces $\int_{Y_n}\phi_n'\,d\mu_{Y_n}=0$ and $|\phi_n'|_2\approx|\tau_n|_2$ so that $\sigma_n\gtrsim C$.
Hence this is a robust mechanism for producing nondegenerate limits in Theorem~\ref{thm-WIP}.
  \end{rmk}

We end this section with some examples of nonuniformly expanding maps where uniformity of the constants can be verified, and hence to which the results in this section apply.

\begin{examp}
Fix a sequence
$\lambda_n\in\Z$ such that $\lambda_n\ge2$ for all $n\ge0$.
Define the family of uniformly expanding maps $T_n:[0,1]\to[0,1]$ given by
$T_nx=\lambda_nx\bmod1$.   Clearly $T_n$ is a uniform family of order $p$ for any $p$, with $Y_n=[0,1]$, $\tau_n=1$ and $\mu_n={\rm Lebesgue}$.

This example emphasizes that tightness in Theorem~\ref{thm-WIP} is unrelated to any accumulation properties of the dynamical systems $T_n$ and is governed purely by accumulation of the bounded set of covariance matrices.
\end{examp}

\begin{examp} \label{ex-LSV2}
Fix a sequence $\gamma_n\in(0,1)$ and let $T_n:[0,1]\to[0,1]$ be the corresponding intermittent map defined in Example~\ref{ex-LSV}.
As verified in~\cite[Example~5.1]{KKMsub}, $T_n$ is a uniform family of order
$p$ for any $p<\sup \gamma_n^{-1}$.
\end{examp}

\begin{examp} \label{ex-CE}
Consider the family of quadratic maps $T_n:[-1,1]\to[-1,1]$ given by
$T_n(x)=1-a_nx^2$, $a_n\in[0,2]$.
We assume that there exists $b,c>0$ such that
the Collet-Eckmann condition~\cite{ColletEckmann83} 
$|(T_n^k)'(1)|\ge ce^{bn}$ holds for all $k$, $n\ge0$.
(By~\cite{Jakobson81,BenedicksCarleson85}, the set of parameters $a_n$ for which this condition holds has positive Lebesgue measure for $b,c$ sufficiently small.)
As verified in~\cite[Example~5.2]{KKMsub} (based on arguments of~\cite{FreitasTodd09}),
$T_n$ is a uniform family of order $p$ for any $p$.  This example generalises to multimodal maps
(see~\cite[Example~5.3]{KKMsub}.
\end{examp}

\begin{examp} \label{ex-Viana}
Viana~\cite{Viana97} introduced a $C^3$ open class of multi-dimensional nonuniformly expanding maps $T_\eps:M\to M$.  For definiteness, we restrict attention to the case $M=S^1\times\R$.
Fix $\lambda_n\in\Z$, $\lambda_n\ge16$, and let
$S_n:M\to M$ be the map
$S_n(\theta,y)=(\lambda_n\theta\bmod1,a_0+a\sin 2\pi\theta-y^2)$.
Here $a_0$ is chosen so that $0$ is a preperiodic point for the quadratic map $y\mapsto a_0-y^2$ and $a$ is fixed sufficiently small.
Let
$T_n$ be a family of $C^3$ maps each of which is sufficiently close to $S_n$.
It follows from~\cite{Alves00,AlvesViana02} that there is an interval $I\subset(-2,2)$ such that, for each $n\ge0$, there is a unique absolutely continuous $T_n$-invariant ergodic probability measure $\mu_n$ supported in the interior of $S^1\times I$.  Moreover the invariant set $\Lambda_\eps=\supp\mu_\eps$ attracts almost every initial condition in $S^1\times I$.

As verified in~\cite[Example~5.4]{KKMsub} (based on arguments of~\cite{Alves04,AlvesLuzzattoPinheiro05}), $T_n:\Lambda_n\to\Lambda_n$ is a uniform family of nonuniformly expanding maps
of order $p$ for any $p$.
\end{examp}

\section{Limit laws for families of nonuniformly hyperbolic transformations}
\label{sec-NUHlimit}

In this section, we show the results from Section~\ref{sec-NUElimit} pass over to the invertible setting.
The notion of nonuniformly hyperbolic transformation is recalled in
Subsection~\ref{sec-NUH}.
In Subsection~\ref{sec-quotient}, we recall how to quotient to a nonuniformly expanding map.  In Subsection~\ref{subsec-NUHlimit}, we prove limit laws for families of nonuniformly hyperbolic transformations.

\subsection{Nonuniformly hyperbolic transformations}
\label{sec-NUH}

Let $T:\Lambda \to\Lambda$ be a diffeomorphism (possibly with singularities) defined on a Riemannian manifold $(\Lambda,d_\Lambda)$.  We assume that $T$ is nonuniformly hyperbolic in the sense of Young~\cite{Young98,Young99}.  The precise definitions are somewhat technical; here we are content to focus on the parts necessary for understanding this paper, referring to~\cite{Young98,Young99} for further details.

As part of this set up, there is a measurable (with respect to the Riemannian measure) set $Y\subset M$, a measurable partition $\{Y_j\}$ of $Y$, and an inducing time
$\tau: Y \to \Z^+$ constant on partition elements such that $T^{\tau(y)}(y)\in Y$
for all $y\in Y$.  We refer to $F=T^\tau:Y\to Y$ as the induced map.
The separation time $s(y,y')$ of points $y,y'\in Y$ is the least integer $n\ge0$ such that $F^ny,F^ny'$ lie in distinct partition elements of $Y$.

In addition, there exist integers $d_s,\,d_u\ge1$ with $d_s+d_u=\dim M$, a measurable partition $\cW^s$ of $Y$ consisting of embedded $d_s$-dimensional disks (called ``stable leaves'') and an embedded $d_u$-dimensional disk $W^u$ (called an ``unstable leaf'') such that
$W^u$ intersects each element of $\cW^s$ in a single point.
If $y\in Y$, the leaf in $\cW^s$ that contains $y$ is labelled $W^s_y$.
Let $\rho$ denote the measure on $W^u$ induced by the Riemannian measure.

We assume that there are constants $D_0$, $D_1\ge 1$, $\gamma\in(0,1)$, $p\ge1$, such that
\begin{itemize}
\item[(A1)] Each $Y_j$ is a union of elements of $\cW^s$
(in particular, $\tau$ is constant on stable leaves),  
 and $F(W^s_y)\subset W^s_{Fy}$ for all $y\in Y$.
\item[(A2)] 
\begin{itemize}
\item[(i)]
 $d_\Lambda(T^jy,T^jy')\le D_0\gamma^j$ for all $y\in Y$, $y'\in W^s_y$, 
\item[(ii)]
$d_\Lambda(T^j y,T^j y')\le D_0\gamma^{s(y,y')-\psi_j(y)}$ for all $y,y'\in W^u$,
\end{itemize}
for all $j\ge0$, where $\psi_j(y)=\#\{k=0,\dots,j-1:T^ky\in Y\}$ is the number of visits of $y$ to $Y$ by time $j$.
\item[(A3)] $\int_Y \tau^p\,d\rho<\infty$.
\end{itemize}

Let $\bar Y=Y/\sim$ where $y\sim y'$ if $y'\in W^s_y$, and let $\bar\pi:Y\to\bar Y$
denote the natural projection.  By (A1), we obtain well-defined functions
$\tau:\bar Y\to\Z^+$ and $\bar F:\bar Y\to\bar Y$.
Let $\bar\rho=\bar\pi_*\rho$.
Let $\alpha$ be the countable partition of $\bar Y$ consisting of the partition elements $Y_j$ quotiented by $W^s$.
We assume:
\begin{itemize}
\item[(A4)]
$\bar F$ restricts to a bijection from $a$ onto $\bar Y$ for all $a\in\alpha$
and $\zeta_0=\frac{d\bar\rho}{d\bar\rho\circ \bar F}$
satisfies $|\log \zeta_0(y)-\log \zeta_0(y')|\le D_1\gamma^{s(y,y')}$ for all
\mbox{$y,y'\in a$}.
\end{itemize}
There is a unique absolutely continuous $\bar F$-invariant probability measure $\bar\mu_Y$ on $\bar Y$
and $d\bar\mu_Y/d\bar\rho\in L^\infty$.
By for instance~\cite[Section~6.1]{APPV09},
there is a unique ergodic $F$-invariant 
probability measure $\mu_Y$ on $Y$ such that $\bar\pi_*\mu_Y=\bar\mu_Y$.

As in Section~\ref{sec-NUElimit}, we define a tower map $f:\Delta\to\Delta$
with semiconjugacy $\pi_\Delta:\Delta\to\Lambda$ from $f$ to $T$, and ergodic $f$-invariant probability measure $\mu_\Delta=\mu_Y\times{\rm counting}/\int_Y\tau\,d\mu_Y$.
Then
$\mu=(\pi_\Delta)_*\mu_\Delta$ is an ergodic $T$-invariant probability measure on $M$.

\begin{rmk} \label{rmk-Ws}
For simplicity, we restrict to the case where $T$ contracts exponentially along stable manifolds.  It is also possible to consider polynomial (but summable) contraction as in~\cite{AlvesAzevedo16}, as well as the general situation~\cite{MV16} where contraction and expansion is assumed only on returns to $Y$.  (The arguments to treat this general situation are correspondingly longer.)
\end{rmk}

Next, we introduce the quotient
tower map $\bar f :\bar\Delta\to\bar\Delta$ defined in the same way as $f:\Delta\to\Delta$ but starting from $\bar F:\bar Y\to \bar Y$ instead of $F:Y\to Y$.
The projection $\bar\pi:Y\to\bar Y$ extends to a projection $\bar\pi:\Delta\to\bar\Delta$, $\bar\pi(y,\ell)=(\bar\pi y,\ell)$, and we have the ergodic $\bar f$-invariant probability measure $\bar\mu_\Delta=\bar\pi_*\mu_\Delta=\bar\mu_Y\times{\rm counting}/\int_{\bar Y}\tau\,d\bar\mu_Y$.

The separation time $s$ on $Y$ projects to a separation time on $\bar Y$.
For $\theta\in(0,1)$ we define the symbolic metric $d_\theta$ on $\bar Y$,
setting $d_\theta(y,y')=\theta^{s(y,y')}$.
This extends to a metric on $\bar\Delta$, where $d_\theta((y,\ell),(y',\ell'))=
\begin{cases} d_\theta(y,y') & \ell=\ell' \\ 1 & \ell\neq\ell'\end{cases}$.

\begin{prop} \label{prop-bar}
Choose $\theta\in[\gamma,1)$.  Then
$\bar f:\bar\Delta\to\bar\Delta$ is a nonuniformly expanding map on the metric
space $(\bar\Delta,d_\theta)$ with induced map $\bar F:\bar Y\to\bar Y$,
partition $\alpha$,
and constants  $\lambda>1$, $\eta\in(0,1]$, $C_0,C_1\ge1$ given by
$\lambda=\theta^{-1}$, $\eta=C_0=1$, $C_1=D_1$.
\end{prop}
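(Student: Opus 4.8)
The plan is to verify conditions (1)--(4) of Subsection~\ref{subsec-NUE} for the map $\bar f:\bar\Delta\to\bar\Delta$ with the metric $d_\theta$, the reference measure $\bar\mu_\Delta$, the return set $\bar Y = \bar Y\times\{0\}\subset\bar\Delta$, the partition $\alpha$, and the return time $\tau$. (Here I implicitly identify $\bar Y$ with its copy $\bar Y\times\{0\}$ in $\bar\Delta$, so that $\bar F = \bar f^\tau$ on this copy.) Condition (1) is immediate from (A4): $\bar F$ restricts to a bijection from $a$ onto $\bar Y$ for each $a\in\alpha$, and by construction of the tower $\bar f^{\tau(a)}$ agrees with $\bar F$ on $a\times\{0\}$. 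For condition (3), I would observe that whenever $0\le\ell<\tau(a)$ the points $\bar f^\ell(y,0)=(y,\ell)$ and $\bar f^\ell(y',0)=(y',\ell)$ lie in the same level, so $d_\theta(\bar f^\ell(y,0),\bar f^\ell(y',0)) = \theta^{s(y,y')}$, while $d_\theta(\bar F y,\bar F y') = \theta^{s(\bar F y,\bar F y')} = \theta^{s(y,y')-1}\ge \theta^{s(y,y')}$ since applying $\bar F$ decreases separation time by exactly one on a partition element. Thus condition (3) holds with $C_0=1$.

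For condition (2), I need $d_\theta(\bar F y,\bar F y')\ge\lambda\, d_\theta(y,y')$ for $y,y'\in a$. As just noted $s(\bar F y,\bar F y') = s(y,y')-1$ for $y,y'$ in a common element $a$ of $\alpha$, so $d_\theta(\bar F y,\bar F y') = \theta^{s(y,y')-1} = \theta^{-1}\theta^{s(y,y')} = \theta^{-1} d_\theta(y,y')$, giving the expansion constant $\lambda = \theta^{-1}>1$ exactly (not merely an inequality). The only subtlety is that this argument is for the metric on the level-$0$ copy; for pairs $(y,\ell),(y',\ell)$ at a higher level inside a tower element the same computation applies verbatim since $d_\theta$ on each level is just $\theta^{s(\cdot,\cdot)}$ pulled back, and $\bar f$ acts on those levels either by $\ell\mapsto\ell+1$ (an isometry, but such levels are not part of the induced-map structure, which only concerns returns) — so really only the level-$0$ computation is needed for conditions (1)--(4).

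Condition (4) is where I would do the small amount of genuine work. The density $\zeta_0 = \frac{d\bar\rho}{d\bar\rho\circ\bar F}$ for the induced map $\bar F$ on $(\bar Y,\bar\rho)$ coincides with the density appearing in (A4), and (A4) gives $|\log\zeta_0(y)-\log\zeta_0(y')|\le D_1\gamma^{s(y,y')}$ for $y,y'\in a$. I need this bounded by $C_1\, d_\theta(\bar F y,\bar F y')^\eta = C_1\,\theta^{\eta(s(y,y')-1)}$ with $\eta=1$; since $\theta\ge\gamma$, we have $\gamma^{s(y,y')}\le\theta^{s(y,y')}\le\theta^{s(y,y')-1}$, so the bound $D_1\gamma^{s(y,y')}\le D_1\,\theta^{s(y,y')-1} = D_1\, d_\theta(\bar F y,\bar F y')$ holds, giving $C_1 = D_1$ and $\eta=1$. (I should also note $\bar\rho$ is finite on the bounded metric space $(\bar\Delta,d_\theta)$ — boundedness is automatic since $d_\theta\le 1$ — and $\bar f$ is nonsingular with respect to $\bar\mu_\Delta$, which follows from the construction of the tower and (A4); the existence and properties of $\bar\mu_Y$ are already recorded in the text.)

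**Main obstacle.** There is no deep obstacle: the proposition is essentially a bookkeeping check that the abstract axioms of Subsection~\ref{subsec-NUE} are met, with every constant tracked explicitly. The one point requiring care is the role of $\theta$: one must use $\theta\ge\gamma$ precisely to absorb the contraction rate $\gamma$ from (A2)(i) and (A4) into the symbolic metric, and one must be careful that the expansion in condition (2) is measured in $d_\theta$ (where it is exact, $\lambda=\theta^{-1}$) rather than in the original Riemannian metric $d_\Lambda$ (where no clean expansion statement is available after quotienting). A secondary point is simply being consistent about identifying $\bar Y$ with $\bar Y\times\{0\}\subset\bar\Delta$ so that "$F=\bar f^\tau$" and the separation-time computations line up.
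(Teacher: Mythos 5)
Your proposal is correct and follows essentially the same route as the paper: verify conditions (1)--(4) of Subsection~\ref{subsec-NUE} directly from (A4) and the definition of $d_\theta$, noting that $s(\bar Fy,\bar Fy')=s(y,y')-1$ on a partition element gives exact expansion $\lambda=\theta^{-1}$, that intermediate tower levels are $d_\theta$-isometric so $C_0=1$, and that $\theta\ge\gamma$ absorbs the rate in (A4) to give $C_1=D_1$, $\eta=1$. The paper's proof is just a more compressed version of the same bookkeeping.
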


\begin{proof}
By (A4), $\bar F$ maps partition elements bijectively onto $\bar Y$.
By definition of $d_\theta$, if $y,y'\in a$, $a\in\alpha$, then
$d_\theta(\bar Fy,\bar Fy')=\theta^{-1}d_\theta(y,y')$ 
and 
$d_\theta(\bar f^\ell y,\bar f^\ell y')=d_\theta(y,y')\le d_\theta(\bar Fy,\bar Fy')$ for all $0\le \ell<\tau(a)$.
Finally, by (A4),
$|\log \zeta_0(x)-\log \zeta_0(y)|\le D_1\gamma^{s(y,y')}\le D_1 d_\theta(\bar Fy,\bar Fy')$.~
\end{proof}

\subsection{Quotienting step}
\label{sec-quotient}

In this subsection, we recall a standard procedure for reducing limit laws for nonuniformly hyperbolic transformations to the noninvertible (nonuniformly expanding) setting.  We work throughout with H\"older observables $v\in C^\eta$, where $\eta\in(0,1]$ is fixed.

First, define a projection $Y\to W^u$ by setting
$\hat y=W^s_y\cap W^u$ for $y\in Y$.   This extends to a projection on $\Delta$ by setting $\hat p=(\hat y,\ell)$ for $p=(y,\ell)\in\Delta$.

Given $v:\Lambda\to\R^d$ H\"older, define $\psi:\Delta\to\R^d$,
\[
\SMALL\psi(p)=\sum_{j=0}^\infty \{v\circ\pi_\Delta(f^jp)-
v\circ\pi_\Delta(f^j\hat p)\}.
\]

\begin{prop} \label{prop-psi}
Let $\theta\in[\gamma^\eta,1)$.  Then
$|\psi|_\infty\le D_0^\eta(1-\theta)^{-1}|v|_\eta$, for all H\"older $v:\Lambda\to\R^d$.
\end{prop}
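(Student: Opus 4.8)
The plan is to bound $\psi$ termwise by exploiting the exponential contraction along stable leaves. The first step is to eliminate the tower dynamics: since $\pi_\Delta$ is a semiconjugacy from $f$ to $T$, we have $\pi_\Delta\circ f^j=T^j\circ\pi_\Delta$ for all $j\ge0$. Writing $p=(y,\ell)\in\Delta$ and recalling that $\hat p=(\hat y,\ell)$, this gives $v\circ\pi_\Delta(f^jp)=v(T^{j+\ell}y)$ and $v\circ\pi_\Delta(f^j\hat p)=v(T^{j+\ell}\hat y)$, so the $j$-th summand of $\psi(p)$ equals $v(T^{j+\ell}y)-v(T^{j+\ell}\hat y)$.

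The second step uses that $\hat y=W^s_y\cap W^u$ lies on the stable leaf $W^s_y$ through $y$, so (A2)(i) applies to the pair $y\in Y$, $\hat y\in W^s_y$: for every $k\ge0$ we have $d_\Lambda(T^ky,T^k\hat y)\le D_0\gamma^k$. Taking $k=j+\ell\ge j$ and using $\gamma<1$ gives $d_\Lambda(T^{j+\ell}y,T^{j+\ell}\hat y)\le D_0\gamma^j$, and H\"older continuity of $v$ bounds the $j$-th summand by $|v|_\eta(D_0\gamma^j)^\eta=|v|_\eta D_0^\eta(\gamma^\eta)^j$. Summing the geometric series then yields $|\psi(p)|\le|v|_\eta D_0^\eta\sum_{j\ge0}(\gamma^\eta)^j=|v|_\eta D_0^\eta(1-\gamma^\eta)^{-1}$, which is finite since $\gamma^\eta<1$; as $\theta\ge\gamma^\eta$ we have $(1-\gamma^\eta)^{-1}\le(1-\theta)^{-1}$, giving the stated bound uniformly in $p\in\Delta$ (and, in particular, absolute convergence of the defining series).

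There is no real obstacle here. The only two points needing a moment's care are that $\pi_\Delta$ intertwines $f$ and $T$ (which reduces the whole estimate to the base map, with the tower level $\ell$ merely shifting the exponent favourably), and that the projected point $\hat y$ genuinely lies on the stable leaf of $y$ so that hypothesis (A2)(i) is applicable. The parameter $\theta$ plays no essential role in the estimate itself; it appears only to align the bound with the symbolic metric $d_\theta$ introduced for the quotient tower.
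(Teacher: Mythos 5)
Your proof is correct and follows essentially the same route as the paper: project via the semiconjugacy to get $v(T^{j+\ell}y)-v(T^{j+\ell}\hat y)$, apply (A2)(i) to the pair $y$, $\hat y\in W^s_y$ to get the bound $D_0\gamma^{j+\ell}\le D_0\gamma^j$, then use H\"older continuity and sum the geometric series, with $\gamma^\eta\le\theta$ giving the stated constant. Your closing remark about the role of $\theta$ is also accurate.
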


\begin{proof}
Let $p=(y,\ell)\in\Delta$. Then $\pi_\Delta(f^jp)=T^{j+\ell} y\in T^{j+\ell} W^s_y$
and $\pi_\Delta(f^j\hat p)=T^{j+\ell} \hat y\in T^{j+\ell} W^s_{\hat y}=T^{j+\ell} W^s_y$.
In particular, 
$d_\Lambda(\pi_\Delta(f^jp),\pi_\Delta(f^j\hat p))\le D_0\gamma^{j+\ell}\le D_0\gamma^j$ by (A2)(i).  Hence
$|\psi(p)|\le |v|_\eta\sum_{j=0}^\infty d_\Lambda(\pi_\Delta(f^jp),\pi_\Delta(f^j\hat p))^{\eta}\le D_0^{\eta}(1-\theta)^{-1}|v|_\eta$.
\end{proof}

A calculation shows that 
$v\circ\pi_\Delta=\hat v+\psi-\psi\circ f$,
where $\hat v\in L^\infty(\Delta)$ is given by
\[
\SMALL \hat v(p)=
v\circ\pi_\Delta( \hat p)+\sum_{j=0}^\infty \{v\circ\pi_\Delta(f^{j+1}\hat p)-
v\circ\pi_\Delta(f^j\widehat{fp})\}.
\]
Note that $\hat v$ is constant along fibres $\bar\pi^{-1}(y)$, $y\in\bar Y$.
Hence we can write $\hat v=\bar v\circ\bar\pi$ where $\bar v:\bar \Delta\to\R^d$.
The observables $v:\Lambda\to\R^d$ and $\bar v:\bar\Delta\to\R^d$ are related by the equation
\begin{align} \label{eq-vv}
v\circ\pi_\Delta=\bar v\circ\bar\pi+\psi-\psi\circ f.
\end{align}
Let $\|\bar v\|_\theta=|\bar v|_\infty+|\bar v|_\theta$ where $|\bar v|_\theta$ denotes the $d_\theta$-Lipschitz constant of $\bar v$.

\begin{prop}  \label{prop-vv}
Let $\theta\in[\gamma^{\eta/2},1)$.  Then
$\|\bar v\|_\theta\le 6D_0^\eta\theta^{-2}(1-\theta^2)^{-1}\|v\|_\eta$,
for all H\"older $v:\Lambda\to\R^d$.
\end{prop}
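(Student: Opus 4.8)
The plan is to estimate separately the sup-norm and the $d_\theta$-Lipschitz seminorm of $\bar v$. Both follow by the same device: since $\hat v = \bar v\circ\bar\pi$ and $\bar\pi$ has the property that each fibre $\bar\pi^{-1}(y)$ contains a point $\hat p$ with $\hat p = \widehat{\hat p}$, it suffices to estimate $\hat v$ on the ``hat section''. From the displayed formula for $\hat v$, together with $v\circ\pi_\Delta = \hat v + \psi - \psi\circ f$ and the estimate $|\psi|_\infty \le D_0^\eta(1-\theta)^{-1}|v|_\eta$ from Proposition~\ref{prop-psi} (valid for $\theta\in[\gamma^\eta,1)$, hence a fortiori for $\theta\in[\gamma^{\eta/2},1)$), we get $|\hat v|_\infty \le |v|_\infty + 2|\psi|_\infty \le |v|_\infty + 2D_0^\eta(1-\theta)^{-1}|v|_\eta$, and $|v|_\infty + 2D_0^\eta(1-\theta)^{-1}|v|_\eta$ is bounded by a constant times $\|v\|_\eta$. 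One then checks this constant fits under $6D_0^\eta\theta^{-2}(1-\theta^2)^{-1}$, using $1-\theta^2 \le 1-\theta$ is the wrong direction, so instead bound $(1-\theta)^{-1}\le (1+\theta)\theta^{-2}(1-\theta^2)^{-1}$-type inequalities; the precise juggling of the numerical constant is routine.

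The substantive part is the Lipschitz estimate. Take $p=(y,\ell)$, $p'=(y',\ell)$ in the same level of $\bar\Delta$ lying over points at symbolic distance $d_\theta(y,y') = \theta^{s(y,y')}$, and lift to the hat section so that $p,p'\in W^u$ with the same separation time. Write $\bar v(p) - \bar v(p') = \hat v(\hat p) - \hat v(\hat p')$ and expand using the series for $\hat v$. The first term $v\circ\pi_\Delta(\hat p) - v\circ\pi_\Delta(\hat p')$ is controlled by (A2)(ii): with $\psi_j$ counting returns, $d_\Lambda(T^j\hat p, T^j\hat p') \le D_0\gamma^{s-\psi_j}$, and raising to the $\eta$ and summing a geometric-type series in $j$ gives a bound proportional to $\gamma^{\eta s}$, i.e.\ to $\theta^{2s} \le \theta^s = d_\theta(y,y')$ when $\theta\ge\gamma^{\eta/2}$. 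The remaining sum $\sum_j \{v\circ\pi_\Delta(f^{j+1}\hat p) - v\circ\pi_\Delta(f^j\widehat{f\hat p})\}$ and its primed counterpart must be compared termwise; here one uses that $f\hat p$ and $\widehat{f\hat p}$ lie on the same stable leaf (so (A2)(i) gives exponential-in-$j$ closeness, hence a fixed geometric series bounded independently of $s$, which is \emph{not} good enough alone) — the point is that the \emph{difference} between the $p$-series and the $p'$-series gains the factor $\gamma^{\eta s}$ because $\hat p$ and $\hat p'$ track each other for $s$ returns before separating, so each paired term differs by at most $D_0^\eta\gamma^{\eta(s-\psi_j)}$-type quantities, again summable to $O(\gamma^{\eta s})$.

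The main obstacle is the bookkeeping in this last step: one must carefully split each infinite sum at the separation time, apply (A2)(i) to the contracted (stable-leaf) pieces and (A2)(ii) to the pieces before separation, and verify that after raising everything to the power $\eta$ and performing the two geometric summations (one over $j$ within each bound, one effectively over $\psi_j$) the total is $\le 6D_0^\eta\theta^{-2}(1-\theta^2)^{-1}\|v\|_\eta\, d_\theta(y,y')$ — in particular tracking why the exponent is $\theta^{2}$ in the denominator (two ``shifts'' from passing between $f$ and the hat projection) and why $1-\theta^2$ rather than $1-\theta$ appears (the geometric ratio is effectively $\theta^2 = \gamma^\eta$ after the substitution $\theta\ge\gamma^{\eta/2}$). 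The case of points in different towers or different levels is immediate since then $d_\theta = 1$ and the sup-norm bound already suffices, provided $|\bar v|_\infty \le 6D_0^\eta\theta^{-2}(1-\theta^2)^{-1}\|v\|_\eta$, which it is.
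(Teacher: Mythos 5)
Your overall architecture is the same as the paper's: bound $|\bar v|_\infty$ using Proposition~\ref{prop-psi}, then estimate the $d_\theta$-Lipschitz constant of $\hat v$ by comparing the two series for $\hat v(p)$ and $\hat v(q)$, using (A2)(i) for the pairs $f\hat p,\widehat{fp}$ lying on a common stable leaf and (A2)(ii) for iterates before the separation. The sup-norm half of your proposal is fine. But the substantive step contains a genuine gap. Your key claim --- that the paired termwise differences are bounded by $D_0^\eta\gamma^{\eta(s-\psi_j)}$-type quantities which are ``again summable to $O(\gamma^{\eta s})$'' --- is false: summing $\gamma^{\eta(s-\psi_j)}$ (or the cleaner bound $\gamma^{\eta(s-j)}$ coming from $\psi_j\le j$) over the iterates up to the separation gives only $O\big((1-\gamma^\eta)^{-1}\big)$, a constant, because the terms \emph{grow} as $j$ approaches the separation; beyond the separation, (A2)(ii) gives nothing at all. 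Your fallback prescription, ``split each infinite sum at the separation time,'' fails for the same reason: the pre-separation block $\sum_{j\le s}\gamma^{\eta(s-j)}$ is $O(1)$, not $O(\theta^{s})$, so the Lipschitz bound does not follow. Note also that your sketch would, if it worked, give the gain $\gamma^{\eta s}=\theta^{2s}$ and would suggest $\theta\ge\gamma^{\eta}$ suffices, which should have been a warning sign, since the proposition only assumes $\theta\ge\gamma^{\eta/2}$ and only claims a gain of $\theta^{s}$.

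The missing idea is the \emph{balanced} split at half the separation time. The paper takes $N=[\tfrac12 s(y,z)]$ and splits each series there: the tails $A_1,A_2$ (indices $j\ge N$) are controlled by (A2)(i), giving $\sum_{j\ge N}\gamma^{\eta j}\le(1-\theta^2)^{-1}\theta^{2N}\le\theta^{-2}(1-\theta^2)^{-1}\theta^{s}$, while the heads $A_3,A_4$ (indices $j\le N$) are controlled by (A2)(ii) via $\psi_j\le j$, giving $\sum_{j\le N}\gamma^{\eta(s-j)}\le(1-\theta^2)^{-1}\theta^{2(s-N)}\le(1-\theta^2)^{-1}\theta^{s}$. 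This balancing is precisely where the hypothesis $\theta\in[\gamma^{\eta/2},1)$ is used, and it is the source of the factors $(1-\theta^2)^{-1}$ and $\theta^{-2}$ (the latter is the rounding loss $2[\tfrac12 s]\ge s-2$, not ``two shifts from the hat projection'' as you suggest). A further, more minor, point: the comparison of the bracketed terms must be done crosswise --- compare $f^{j+1}\hat p$ with $f^{j+1}\hat q$, and $f^j\widehat{fp}$ with $f^j\widehat{fq}$, separately (the paper's $A_3$ and $A_4$) --- since neither (A2)(i) nor (A2)(ii) applies directly to the bracketed difference as a whole; but once the half-separation split is in place, the rest of your outline goes through.
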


\begin{proof}
By Proposition~\ref{prop-psi}, $|\bar v|_\infty\le 2D_0^\eta(1-\theta)^{-1}\|v\|_\eta$.

Next, let $p=(y,\ell)$, $q=(z,\ell)\in\Delta$.
Write $|\hat v(p)-\hat v(q)|\le A_1+A_2+A_3+A_4$ where
\begin{alignat*}{2} 
A_1 & = \sum_{j=N}^\infty|v\circ\pi_\Delta(f^{j+1}\hat p)-v\circ\pi_\Delta(f^j\widehat{fp})|, 
& \quad
A_2 & = \sum_{j=N}^\infty|v\circ\pi_\Delta(f^{j+1}\hat q)-v\circ\pi_\Delta(f^j\widehat{fq})|, \\
A_3 & = \sum_{j=0}^N|v\circ\pi_\Delta(f^j\hat p)-v\circ\pi_\Delta(f^j\hat q)|,
& \quad
A_4 & = \sum_{j=0}^{N-1}|v\circ\pi_\Delta(f^j\widehat{fp})-v\circ\pi_\Delta(f^j\widehat{fq})|.
\end{alignat*}

We take $N=[\frac12 s(y,z)]$ and show that
$A_j\le 
D_0^\eta\theta^{-2}(1-\theta^2)^{-1}|v|_\eta\,\theta^{s(y,z)}$ for $j=1,\dots,4$.
Hence $|\hat v|_\theta\le 4D_0^\eta\theta^{-2}(1-\theta^2)^{-1}|v|_\eta d_\theta(p,q)$ and the proof is complete.

First, we estimate $A_1$.
If $\ell<\tau(y)-2$ then $f\hat p=\widehat{fp}=(\hat y,\ell+1)$ and \mbox{$A_1=0$}.
Otherwise $f\hat p=(F\hat y,0)$ and
$\widehat{fp}=(\widehat{Fy},0)$.
In particular, 
$\pi_\Delta(f\hat p),\, \pi_\Delta(\widehat{fp})\in W^s_{Fy}$, 
so it follows from (A2)(i) that
\[
d_\Lambda(\pi_\Delta(f^{j+1}\hat p),\pi_\Delta(f^j\widehat{fp}))=
d_\Lambda(
T^j(\pi_\Delta(f\hat p)), T^j(\pi_\Delta(\widehat{fp}))) \le D_0\gamma^j.
\]
Hence $A_1\le |v|_\eta D_0^\eta\sum_{j=N}^\infty \theta^{2j}\le 
D_0^\eta(1-\theta^2)^{-1}\theta^{2N}|v|_\eta
\le D_0^\eta\theta^{-2}(1-\theta^2)^{-1}|v|_\eta\,\theta^{s(y,z)}$.
Similarly $A_2\le 
 D_0^\eta\theta^{-2}(1-\theta^2)^{-1}|v|_\eta\,\theta^{s(y,z)}$.

Next, we estimate $A_3$.
For each $j$, we have $\pi_\Delta(f^j\hat p)=T^{j+\ell} \hat y=T^LF^J\hat y$
where $0\le J\le j$ and $0\le L<\tau(F^J\hat y)$.
Note that $F^jy$ and $F^jz$ lie in the same partition element of $Y$ for all $j\le N$, so $\pi_\Delta(f^j\hat q)=T^{j+\ell} \hat z=T^LF^J\hat z$.
By (A2)(ii), 
\[
d_\Lambda(\pi_\Delta(f^j\hat p),\pi_\Delta(f^j\hat q))= d_\Lambda(T^LF^J\hat y,T^LF^J\hat z)\le 
D_0\gamma^{s(y,z)-J} \le D_0\gamma^{s(y,z)-j}.
\]
Hence $A_3\le 
D_0^\eta(1-\theta^2)^{-1}|v|_\eta\theta^{2(s(y,z)-N)}
\le D_0^\eta(1-\theta^2)^{-1}|v|_\eta\theta^{s(y,z)}$.
Similarly, $A_4\le D_0^\eta(1-\theta^2)^{-1}|v|_\eta\theta^{s(y,z)}$.
\end{proof}

\begin{cor} \label{cor-UMESigma}
Suppose that $v:\Lambda\to\R^d$ is H\"older with $\int_\Lambda v\,d\mu=0$.
Let $\bar v:\bar\Delta\to\R^d$ be the corresponding $d_\theta$-Lipschitz observable.
Then
\begin{itemize}
\item[(a)]
$\big|\max_{j\le n}|\sum_{k=0}^{j-1} v\circ T^k|\big|_{L^{p^*}\!(\mu)}\le
C \|v\|_\eta\, n^{\max\{\frac12,\frac1p\}}$ for all $n\ge1$, where $C\ge1$ is a constant that depends continuously on $D_0,D_1,\gamma$, and $p^*=\max\{p,2(p-1)\}$.
\item[(b)] Suppose that $p\ge2$.  Then the limits
\[
\SMALL \Sigma=\lim_{n\to\infty}n^{-1}\int_\Lambda
S_nv \,S_nv^T\,d\mu
=\lim_{n\to\infty}n^{-1}\int_{\bar\Delta} 
S_n\bar v \,S_n\bar v^T
\,d\bar\mu_\Delta,
\]
exist and coincide, where
$S_nv=\sum_{j=0}^{n-1}v\circ T^j$,
$S_n\bar v=\sum_{j=0}^{n-1}\bar v\circ \bar f^j$.
\end{itemize}
\end{cor}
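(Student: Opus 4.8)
The plan is to reduce everything to the nonuniformly expanding setting handled in Sections~\ref{sec-NUE}--\ref{sec-secondary}, using the decomposition~\eqref{eq-vv} as the bridge. By Proposition~\ref{prop-vv}, the observable $\bar v:\bar\Delta\to\R^d$ is $d_\theta$-Lipschitz with $\|\bar v\|_\theta\ll\|v\|_\eta$; and by Proposition~\ref{prop-bar}, $\bar f:\bar\Delta\to\bar\Delta$ is a nonuniformly expanding map with uniform constants, whose induced-time integrability is governed by (A3). So the results of Sections~\ref{sec-NUE}--\ref{sec-secondary} apply with $T$ replaced by $\bar f$ and $v$ replaced by $\bar v$ (using Remark~\ref{rmk-phi}: the only properties needed are that $\bar v$ is bounded, mean zero, and has $\|P\bar v'\|_\eta<\infty$, all of which follow from $d_\theta$-Lipschitzness and Proposition~\ref{prop-iLip}).

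For part~(a), first I would apply Corollary~\ref{cor-moment} (with $\bar f$, $\bar v$) to get $\big|\max_{j\le n}|\sum_{k=0}^{j-1}\bar v\circ\bar f^k|\big|_{L^{p^*}(\bar\mu_\Delta)}\ll\|v\|_\eta n^{\max\{1/2,1/p\}}$. Next, $\bar\pi:\Delta\to\bar\Delta$ is a measure-preserving semiconjugacy, so the same bound holds for $\sum_{k=0}^{j-1}(\bar v\circ\bar\pi)\circ f^k$ on $(\Delta,\mu_\Delta)$. By~\eqref{eq-vv}, $v\circ\pi_\Delta=\bar v\circ\bar\pi+\psi-\psi\circ f$, so
\[
\SMALL {\textstyle\sum_{k=0}^{j-1}} (v\circ\pi_\Delta)\circ f^k
= {\textstyle\sum_{k=0}^{j-1}} (\bar v\circ\bar\pi)\circ f^k + \psi - \psi\circ f^j,
\]
and since $|\psi|_\infty<\infty$ by Proposition~\ref{prop-psi}, the telescoped coboundary contributes $O(1)$ uniformly in $j$; hence the bound passes to $\sum_{k=0}^{j-1}(v\circ\pi_\Delta)\circ f^k$. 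Finally $\pi_\Delta:\Delta\to\Lambda$ is a measure-preserving semiconjugacy, so $\sum_{k=0}^{j-1}v\circ T^k$ has the same distribution as $\sum_{k=0}^{j-1}(v\circ\pi_\Delta)\circ f^k$, giving the claimed $L^{p^*}(\mu)$ estimate. The constant $C$ inherits continuous dependence on $D_0,D_1,\gamma$ through Propositions~\ref{prop-bar},~\ref{prop-vv} and the constant in Corollary~\ref{cor-moment}.

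For part~(b), with $p\ge2$, I would apply Corollary~\ref{cor-Sigma} to $(\bar f,\bar v)$ to get existence of $\lim_{n\to\infty} n^{-1}\int_{\bar\Delta} S_n\bar v\,S_n\bar v^T\,d\bar\mu_\Delta$, equal to $\int_{\bar\Delta_{\mathrm{tower}}} \bar m\,\bar m^T\,d\cdot$; call it $\Sigma$. Lifting by $\bar\pi$, this also equals $\lim n^{-1}\int_\Delta S_n(\bar v\circ\bar\pi)\,S_n(\bar v\circ\bar\pi)^T d\mu_\Delta$. Then using~\eqref{eq-vv} again, $S_n(v\circ\pi_\Delta)-S_n(\bar v\circ\bar\pi)=\psi-\psi\circ f^n$ is bounded in $L^\infty$ uniformly in $n$, while $|S_n(\bar v\circ\bar\pi)|_2\ll n^{1/2}$ by part~(a); the same Cauchy--Schwarz estimate as in the proof of Corollary~\ref{cor-Sigma} shows $n^{-1}\big(\int_\Delta S_n(v\circ\pi_\Delta)S_n(v\circ\pi_\Delta)^T - \int_\Delta S_n(\bar v\circ\bar\pi)S_n(\bar v\circ\bar\pi)^T\big)\to 0$. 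Pushing forward by $\pi_\Delta$ gives the first equality, and identification of $\Sigma$ as the $\bar f$-limit gives the second.

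The only genuinely delicate point is making sure the coboundary $\psi$ is a legitimate $L^\infty$ object on $\Delta$ (not just formally) and that~\eqref{eq-vv} holds pointwise a.e.; but this is exactly the content of Proposition~\ref{prop-psi} together with the calculation preceding~\eqref{eq-vv}, so there is no real obstacle — the argument is a routine assembly of the already-established ingredients, with the main care being bookkeeping of which $\theta$ is chosen (one needs $\theta\in[\gamma^{\eta/2},1)$ for Proposition~\ref{prop-vv} and then $\lambda=\theta^{-1}$ feeds into the constants in Corollary~\ref{cor-moment}).
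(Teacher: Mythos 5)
Your proposal is correct and follows essentially the same route as the paper: reduce to the quotient system via Propositions~\ref{prop-bar} and~\ref{prop-vv}, use the decomposition~\eqref{eq-vv} with the bounded $\psi$ from Proposition~\ref{prop-psi}, then invoke Corollary~\ref{cor-moment} for part~(a) and Corollary~\ref{cor-Sigma} together with the Cauchy--Schwarz comparison for part~(b), transferring estimates through the measure-preserving maps $\bar\pi$ and $\pi_\Delta$. No substantive differences from the paper's argument.
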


\begin{proof}
Define
$S_n(v\circ\pi_\Delta)$ and $S_n(\bar v\circ\bar\pi)$ similarly.

By Propositions~\ref{prop-bar} and~\ref{prop-vv}, $\bar v$ is a mean zero
Lipschitz observable for the nonuniformly
expanding map $\bar f:\bar\Delta\to\bar\Delta$ on the metric space
$(\bar\Delta,d_\theta)$.  Moreover, $\|\bar v\|_\theta\ll \|v\|_\eta$.
Hence, by Corollary~\ref{cor-moment},
$\big|\max_{j\le n}|S_j\bar v|\big|_{L^{p^*}(\bar\mu_\Delta)}\ll \|v\|_\eta\,n^{\max\{\frac12,\frac1p\}}$.

By~\eqref{eq-vv} and Proposition~\ref{prop-psi},
and using that $\pi_\Delta:\Delta\to\Lambda$ and $\bar\pi:\Delta\to\bar\Delta$ 
are measure-preserving,
\begin{align*}
\big|\max_{j\le n}|S_jv|\big|_{L^{p^*}(\mu)} & =
\big|\max_{j\le n}|S_j(v\circ\pi_\Delta)|\big|_{L^{p^*}(\mu_\Delta)}\le
\big|\max_{j\le n}|S_j(\bar v\circ\bar\pi)|\big|_{L^{p^*}(\mu_\Delta)}+2|\psi|_{L^{p^*}(\mu_\Delta)}
\\ & = \big|\max_{j\le n}|S_j\bar v|\big|_{L^{p^*}(\bar\mu_\Delta)}+2|\psi|_{L^{p^*}(\mu_\Delta)} \ll \|v\|_\eta \,n^{\max\{\frac12,\frac1p\}},
\end{align*}
proving part (a).

Next, by Corollary~\ref{cor-Sigma},
$n^{-1}\int_{\bar\Delta}S_n\bar v\,S_n\bar v^T d\bar\mu_\Delta$ converges.
Also,
\begin{align*}
 & \SMALL \big|\int_\Lambda S_nv\,S_nv^T d\mu-\int_{\bar\Delta}S_n\bar v\,S_n\bar v^T d\bar\mu_\Delta\big|
 \\ & \quad  =  \SMALL \big|\int_\Delta \big( S_n(v\circ\pi_\Delta)\,S_n(v\circ\pi_\Delta)^T -S_n(\bar v\circ\bar\pi)\,S_n(\bar v\circ\bar\pi)^T\big)\, d\mu_\Delta\big|
 \\ & \quad \le |S_n(v\circ\pi_\Delta)-S_n(\bar v\circ\bar\pi)|_{L^2(\mu_\Delta)}
 \big(|S_n(v\circ\pi_\Delta)|_{L^2(\mu_\Delta)} + |S_n(\bar v\circ\bar \pi)|_{L^2(\mu_\Delta)}\big)
\\ & \quad  \le 2|\psi|_{L^2(\mu_\Delta)}
 \big(|S_n(v\circ\pi_\Delta)|_{L^2(\mu_\Delta)} + |S_n(\bar v\circ\bar \pi)|_{L^2(\mu_\Delta)}\big)
 \ll |v|_\eta\|v\|_\eta n^{1/2},
\end{align*}
by Proposition~\ref{prop-psi} and the estimates in the proof of part~(a).
Part (b) follows.
\end{proof}

\subsection{Families of nonuniformly hyperbolic transformations}
\label{subsec-NUHlimit}

Suppose that $T_n:\Lambda_n\to\Lambda_n$, $n\ge0$, is a family of nonuniformly hyperbolic transformations with induced maps $F_n=T_n^{\tau_n}:Y_n\to Y_n$.  
Let $p\ge1$.
We say that this is a {\em uniform family of order $p$} if 
\begin{itemize}

\parskip=-2pt
\item[(i)] The constants $D_0,D_1\ge1$, $\gamma\in(0,1)$ 
can be chosen independent of $n\ge0$.
\item[(ii)]   The family $\{\tau_n^p,\, n\ge0\}$ is uniformly integrable.
\end{itemize}

Let $v_n:\Lambda\to\R^d$ be a family of H\"older observables with
$\int_{\Lambda_n}v_n\,d\mu_n=0$.
We suppose that $\sup_{n\ge0}\|v_n\|_\eta<\infty$.

\begin{prop} \label{prop-NUHmoment}  
$\big|\max_{j\le n}|\sum_{k=0}^{j-1} v_n\circ T_n^k|\big|_{L^{p^*}\!(\mu_n)}\le
C \|v_n\|_\eta\, n^{\max\{\frac12,\frac1p\}}$ for all $n\ge0$,
where $p^*=\max\{p,2(p-1)\}$.
\end{prop}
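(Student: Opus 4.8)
The plan is to run the quotienting argument of Corollary~\ref{cor-UMESigma}(a) for each $T_n$ separately, and then simply observe that every constant that appears depends continuously on $D_0,D_1,\gamma$, the order $p$, and the modulus of uniform integrability of $\{\tau_n^p\}$, hence is independent of $n$ by conditions (i) and (ii) of a uniform family.

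First I would fix $\theta\in[\gamma^{\eta/2},1)$; this choice is legitimate uniformly in $n$ because $\gamma$ is independent of $n$. By Proposition~\ref{prop-bar}, each quotient tower map $\bar f_n:\bar\Delta_n\to\bar\Delta_n$ is a nonuniformly expanding map on $(\bar\Delta_n,d_\theta)$ with induced map $\bar F_n$, partition $\alpha_n$, and constants $\lambda=\theta^{-1}$, $\eta=C_0=1$, $C_1=D_1$, all independent of $n$; moreover $\diam(\bar\Delta_n,d_\theta)\le1$ by construction of $d_\theta$. Since $\tau_n$ is constant on stable leaves by (A1), it descends to a return time $\bar\tau_n$ on $\bar Y_n$, and pushing measures forward under $\bar\pi_n$ gives $\int g(\bar\tau_n)\,d\bar\mu_{Y_n}=\int g(\tau_n)\,d\mu_{Y_n}$ for any $g\ge0$, so uniform integrability of $\{\tau_n^p\}$ is inherited by $\{\bar\tau_n^p\}$. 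Hence $\{\bar f_n\}$ is a uniform family of nonuniformly expanding maps of order $p$ in the sense of Section~\ref{sec-NUElimit}. For each $n$ let $\bar v_n:\bar\Delta_n\to\R^d$ and $\psi_n:\Delta_n\to\R^d$ be the observables produced by the quotienting step, so that, as in \eqref{eq-vv},
\[
v_n\circ\pi_{\Delta_n}=\bar v_n\circ\bar\pi_n+\psi_n-\psi_n\circ f_n,
\]
with $|\psi_n|_\infty\le D_0^\eta(1-\theta)^{-1}|v_n|_\eta$ (Proposition~\ref{prop-psi}) and $\|\bar v_n\|_\theta\le 6D_0^\eta\theta^{-2}(1-\theta^2)^{-1}\|v_n\|_\eta$ (Proposition~\ref{prop-vv}); both bounds are uniform, and integrating the displayed identity against the $f_n$-invariant measure $\mu_{\Delta_n}$ shows $\int_{\bar\Delta_n}\bar v_n\,d\bar\mu_{\Delta_n}=0$. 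In particular $\sup_n\|\bar v_n\|_\theta<\infty$.

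Now I would apply Lemma~\ref{lem-moment} to the family $\bar f_n$ with observables $\bar v_n$ — its two cases $p\le2$ and $p\ge2$ combine into the single bound with $p^*=\max\{p,2(p-1)\}$ and exponent $\max\{\tfrac12,\tfrac1p\}$ — obtaining $\big|\max_{j\le n}|\sum_{k=0}^{j-1}\bar v_n\circ\bar f_n^k|\big|_{L^{p^*}(\bar\mu_{\Delta_n})}\le C\|v_n\|_\eta n^{\max\{1/2,1/p\}}$ with $C$ independent of $n$. Finally, using that $\pi_{\Delta_n}$ and $\bar\pi_n$ are measure-preserving together with the telescoping bound $\big|\sum_{k=0}^{j-1}(\psi_n-\psi_n\circ f_n)\circ f_n^k\big|=|\psi_n-\psi_n\circ f_n^j|\le2|\psi_n|_\infty$,
\begin{align*}
\big|\max_{j\le n}|\sum_{k=0}^{j-1}v_n\circ T_n^k|\big|_{L^{p^*}(\mu_n)}
&=\big|\max_{j\le n}|\sum_{k=0}^{j-1}(v_n\circ\pi_{\Delta_n})\circ f_n^k|\big|_{L^{p^*}(\mu_{\Delta_n})}\\
&\le\big|\max_{j\le n}|\sum_{k=0}^{j-1}\bar v_n\circ\bar f_n^k|\big|_{L^{p^*}(\bar\mu_{\Delta_n})}+2|\psi_n|_\infty\ll\|v_n\|_\eta\,n^{\max\{\frac12,\frac1p\}},
\end{align*}
which is the claimed estimate.

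I do not expect a genuine obstacle here: the argument is a bookkeeping refinement of Corollary~\ref{cor-UMESigma}(a). The only point that deserves care is verifying that each ingredient — Propositions~\ref{prop-bar},~\ref{prop-psi},~\ref{prop-vv} and Lemma~\ref{lem-moment} — has been stated with constants depending only on the data $D_0,D_1,\gamma,p$ and on the uniform-integrability modulus of $\{\tau_n^p\}$; this is precisely what a uniform family of order $p$ supplies, so the resulting constant $C$ is manifestly independent of $n$.
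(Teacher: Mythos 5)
Your proposal is correct and is essentially the paper's argument: the paper simply cites Corollary~\ref{cor-UMESigma}(a), whose proof is exactly the quotienting argument (Propositions~\ref{prop-bar},~\ref{prop-psi},~\ref{prop-vv} plus the moment bound for the quotient nonuniformly expanding maps and the telescoping of the coboundary $\psi_n-\psi_n\circ f_n$) that you spell out, with the constant depending only on $D_0,D_1,\gamma,p$ and the uniform bound on $|\tau_n|_p$ supplied by conditions (i) and (ii). Your extra bookkeeping — checking that $\{\bar f_n\}$ is a uniform family and that $\bar v_n$ has mean zero with $\|\bar v_n\|_\theta\ll\|v_n\|_\eta$ — is exactly the uniformity the paper leaves implicit, so there is no gap.
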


\begin{proof}
This is immediate from Corollary~\ref{cor-UMESigma}(a).
\end{proof}

Proceeding as in Subsection~\ref{sec-NUH}, we construct
metric spaces $(\bar\Delta_n,d_{\theta,n})$ and
families $\psi_n:\Delta_n\to\R^d$, 
$\bar v_n:\bar\Delta_n\to\R^d$ where 
$|\psi_n|_\infty\ll|v_n|_\eta$,
$\|\bar v_n\|_{\theta,n}\ll\|v_n\|_\eta$ such that
\begin{align*} 
v_n\circ\pi_{\Delta_n}=\bar v_n\circ\bar\pi_n+\psi_n-\psi_n\circ f_n.
\end{align*}

By Corollary~\ref{cor-UMESigma}(b), for $p\ge2$ we can define the family of covariance matrices
\begin{align*}
\Sigma_n
& \SMALL =\lim_{k\to\infty}k^{-1}\int_{\Lambda_n}
\big(\sum_{j=0}^{k-1}v_n\circ T_n^j\big)
\big(\sum_{j=0}^{k-1}v_n\circ T_n^j\big)^T\,d\mu_n, \\
&\SMALL  =\lim_{k\to\infty}k^{-1}\int_{\bar\Delta_n} 
\big(\sum_{j=0}^{k-1}\bar v_n\circ \bar f_n^j\big)
\big(\sum_{j=0}^{k-1}\bar v_n\circ \bar f_n^j\big)^T\,d\bar\mu_{\Delta_n}.
\end{align*}

Let
$W_n(t)=n^{-1/2}\sum_{j=0}^{[nt]-1}v_n\circ T_n^j$.
Define $\cW\subset D([0,\infty),\R^d)$ to be the set of weak limits of $\{W_n,\,n\ge0\}$ and let
$\cS\subset\R^{d\times d}$ be the set of limit points of $\{\Sigma_n,\,n\ge0\}$.

\begin{thm} \label{thm-NUH}
Suppose that $p\ge2$.  Then (i) $\{W_n,\,n\ge0\}$ is tight, (ii)
$W\in \cW$ if and only if $W$ is a Brownian motion 
with covariance matrix in $\cS$.

In particular, if $\lim_{n\to\infty}\Sigma_n=\Sigma\in \R^{d\times d}$,
then $W_n\to_wW$ where $W$ is Brownian motion with covariance $\Sigma$.
\end{thm}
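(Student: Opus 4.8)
The plan is to reduce Theorem~\ref{thm-NUH} to its nonuniformly expanding counterpart, Theorem~\ref{thm-WIP}, by running the quotienting construction of Subsection~\ref{sec-quotient} uniformly in $n$. First I would fix a single $\theta\in[\gamma^{\eta/2},1)$ — possible because $\gamma$ and $\eta$ are uniform in $n$ — and apply Proposition~\ref{prop-bar} to see that each quotient tower map $\bar f_n:\bar\Delta_n\to\bar\Delta_n$ is nonuniformly expanding on the symbolic metric space $(\bar\Delta_n,d_{\theta,n})$ with constants $\lambda=\theta^{-1}$, $\eta=C_0=1$, $C_1=D_1$, all independent of $n$. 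Since $d_{\theta,n}\le 1$ one has $\sup_n\diam(\bar\Delta_n,d_{\theta,n})\le 1$, and $\{\tau_n^p\}$ is uniformly integrable by hypothesis, so $\{\bar f_n\}$ is a uniform family of nonuniformly expanding maps of order $p\ge2$ in the sense of Section~\ref{sec-NUElimit}. By the computations already carried out in the proof of Corollary~\ref{cor-UMESigma}, the observables $\bar v_n$ are mean zero and $d_{\theta,n}$-Lipschitz with $\sup_n\|\bar v_n\|_{\theta,n}<\infty$, and their associated covariance matrices are exactly the $\Sigma_n$ by Corollary~\ref{cor-UMESigma}(b); in particular the set $\cS$ of limit points of $\{\Sigma_n\}$ is the same whether computed from $T_n$ or from $\bar f_n$.

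Next I would apply Theorem~\ref{thm-WIP} to the family $\bar f_n$ with observables $\bar v_n$. Writing $\bar W_n(t)=n^{-1/2}\sum_{j=0}^{[nt]-1}\bar v_n\circ\bar f_n^j$ on $(\bar\Delta_n,\bar\mu_{\Delta_n})$, this gives that $\{\bar W_n\}$ is tight, that its weak limits are precisely the Brownian motions with covariance matrix in $\cS$, and that along any subsequence with $\Sigma_{n_k}\to\Sigma$ one has $\bar W_{n_k}\to_{\bar\mu_{\Delta_{n_k}}}W(\Sigma)$, where $W(\Sigma)$ denotes Brownian motion with covariance $\Sigma$.

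To transfer this back to $T_n$, I would work on the (non-quotiented) tower $(\Delta_n,\mu_{\Delta_n})$ and telescope the identity $v_n\circ\pi_{\Delta_n}=\bar v_n\circ\bar\pi_n+\psi_n-\psi_n\circ f_n$ to obtain, for every $t\ge0$,
\[
\SMALL n^{-1/2}\sum_{j=0}^{[nt]-1}v_n\circ\pi_{\Delta_n}\circ f_n^j
=n^{-1/2}\sum_{j=0}^{[nt]-1}\bar v_n\circ\bar\pi_n\circ f_n^j
+n^{-1/2}\big(\psi_n-\psi_n\circ f_n^{[nt]}\big).
\]
Since $|\psi_n|_\infty\ll\|v_n\|_\eta$ uniformly in $n$, the last term is bounded in absolute value by $2n^{-1/2}|\psi_n|_\infty\to 0$, uniformly in $t$ and in $n$; hence for every $T>0$ the supremum over $t\in[0,T]$ of the difference between the left-hand process and $\bar W_n\circ\bar\pi_n$ tends to $0$ in $L^\infty(\mu_{\Delta_n})$. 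Because $\bar\pi_n:\Delta_n\to\bar\Delta_n$ is measure-preserving, $\bar W_n\circ\bar\pi_n$ has the same distribution as $\bar W_n$, so the conclusions of the previous paragraph pass to the left-hand process; and because $\pi_{\Delta_n}:\Delta_n\to\Lambda_n$ is a measure-preserving semiconjugacy from $f_n$ to $T_n$, the left-hand process has the same distribution as $W_n$. This yields (i) and (ii). The ``in particular'' claim is then immediate: if $\Sigma_n\to\Sigma$ then $\cS=\{\Sigma\}$, so every subsequence of $\{W_n\}$ has a further subsequence converging weakly to $W(\Sigma)$, whence $W_n\to_w W(\Sigma)$.

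I do not expect any genuinely hard analytic step here: the substance has already been packaged into Corollary~\ref{cor-UMESigma}, Proposition~\ref{prop-bar} and Theorem~\ref{thm-WIP}, and the rest is assembly. The only point requiring real care is the uniformity bookkeeping — checking that a single choice of $\theta$ turns a uniform family of nonuniformly hyperbolic transformations of order $p$ into a uniform family of nonuniformly expanding maps of order $p$ (so that Theorem~\ref{thm-WIP} applies verbatim to $\{\bar f_n\}$), and that the coboundary remainder $n^{-1/2}(\psi_n-\psi_n\circ f_n^{[nt]})$ is controlled simultaneously in $n$ and $t$, so that the weak convergence survives both the pull-back along $\bar\pi_n$ and the push-forward along $\pi_{\Delta_n}$.
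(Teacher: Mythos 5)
Your proposal is correct and follows essentially the same route as the paper: quotient to the uniform family $\bar f_n$ via Proposition~\ref{prop-bar}, apply Theorem~\ref{thm-WIP} to the observables $\bar v_n$ (with $\|\bar v_n\|_{\theta,n}\ll\|v_n\|_\eta$ from Proposition~\ref{prop-vv}), and transfer the weak limits through the uniformly bounded coboundary $\psi_n$ and the measure-preserving maps $\bar\pi_n$, $\pi_{\Delta_n}$. The uniformity bookkeeping you flag is exactly the content the paper packages into Propositions~\ref{prop-bar}, \ref{prop-psi} and~\ref{prop-vv}, so there is nothing missing.
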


\begin{proof}
Define the process
$ \overline{W}_n(t)  = n^{-1/2}\sum_{j=0}^{[nt]-1} \bar v_n \circ \bar f_n^j$ on $(\bar\Delta_n,\bar\mu_{\Delta_n})$.

By Proposition~\ref{prop-bar},
$\bar f_n:\bar\Delta_n\to\bar\Delta_n$
is a uniform family of nonuniformly
expanding maps.
By Proposition~\ref{prop-vv}, $\bar v_n$ is a family of mean zero
Lipschitz observables 
satisfying $\|\bar v_n\|_{\theta,n}\ll\|v_n\|_\eta$.
Hence Theorem~\ref{thm-WIP} characterises the weak limits of 
$\{\overline{W}_n,\,n\ge0\}$.

It remains to show that the weak limits of 
$\{W_n,\,n\ge0\}$ coincide with those of
$\{\overline{W}_n,\,n\ge0\}$.
Since $\pi_{\Delta_n}$ and $\bar\pi_n$ are measure-preserving semiconjugacies,
the weak limits of 
$\{W_n\}$ coincide with those of
$\{W_n\circ\pi_{\Delta_n}\}$, and
the weak limits of 
$\{\overline{W}_n\}$ coincide with those of
$\{\overline{W}_n\circ\bar\pi_n\}$.
Also 
\[
\SMALL \sup_{[0,T]}|W_n\circ \pi_{\Delta_n}-\overline{W}_n\circ\bar\pi_n|_\infty\le 2n^{-1/2}|\psi_n|_\infty\ll n^{-1/2}|v_n|_\eta, 
\]
so
$W_n\circ\pi_{\Delta_n}-\overline{W}_n\circ\bar\pi_n\to_{\mu_{\Delta_n}}0$
completing the proof.
\end{proof}

\begin{examp}  \label{ex-solenoid}
The classical solenoid construction of Smale \& Williams can be used
as in~\cite{AlvesPinheiro08} to construct nonuniformly hyperbolic families
from each of the nonuniformly expanding families in the examples in Section~\ref{sec-NUElimit}.  It is immediate that our results apply to such families.
\end{examp}

\begin{examp} \label{ex-billiard}
Collision maps for dispersing billiards under small external forces are nonuniformly hyperbolic with uniform constants for all $p$ by~\cite{Chernov01}.   
(See the proof of~\cite[Proposition~6.4]{Chernov01} where uniformity of constants is mentioned explicitly.)
Hence the results in this section apply to such examples.
\end{examp}

%
%
%

\section{An abstract homogenization theorem}
\label{sec-homog}

In~\cite{GM13b}, a homogenization theorem was proved for fast-slow systems
of the form 
\[
\x(n+1)=\x(n)+\eps^2 a(\x(n),y(n))+\epsilon b(\x(n))\,v(y(n)),
\]
where the fast dynamics $y(n+1)=Ty(n)$ is generated by an ergodic transformation $T:\Lambda\to\Lambda$
and the slow variables $x(n)$ lie in $\R^d$.
The main assumptions are that
either $d=1$ or $b\equiv I_d$ (or more generally that $b$ is exact, see below), and that $v:\Lambda\to\R^d$ is mean zero and satisfies the WIP for $T$.
The corresponding result for flows was obtained in~\cite{MS11}.

In this section, we show how to generalise to the case where the single map $T$ generating the fast dynamics is replaced by a family of maps.  
As in~\cite{GM13b,MS11}, the setting is completely abstract, with no hyperbolicity assumptions on the fast dynamics.

Let $T_\eps:M \to M$, $\eps\in[0,\eps_0)$, be a family of 
maps defined on a topological space $M$, with $T_\eps$-invariant Borel probability measures $\mu_\eps$.
Consider the family of fast-slow equations
\begin{align} \label{eq-map}
\x(n+1)=\x(n)+\eps^2 a_\eps(\x(n),\y(n))+\epsilon b_\eps(\x(n))\,v_\eps(\y(n)),
\quad \x(0)=\xi_\eps,
\end{align}
where the fast dynamics is given by $\y({n+1})=T_\eps\,\y(n)$,
and $\xi_\eps\in\R^d$ is the initial condition for the slow dynamics.
The maps $v_\eps:M \to\R^d$, $a_\eps: \R^d \times M \to \R^d$ 
and $b_\eps:\R^d\to\R^{d\times d}$ are defined and continuous
for each $\eps\in[0,\eps_0)$.
Moreover, $\int_M v_\eps\,d\mu_\eps=0$.

\vspace{-1ex}
\paragraph{Regularity assumptions}
We suppose that there is a constant $L\ge1$ such that
\begin{align*}
& |v_\eps|_\infty\le L, \quad
 |a_\eps|_\infty\le L, \quad 
\Lip\,a_\eps=\sup_{x\neq x'}\sup_y\frac{|a_\eps(x,y)-a_\eps(x',y)|}{|x-x'|}\le L,
\end{align*}
for all $\eps\in[0,\eps_0)$.
Also, we assume that
\begin{align*}
& \lim_{\eps\to0}\sup_{x,y}|a_\eps(x,y)-a_0(x,y)|=0,
\qquad
 \lim_{\eps\to0}\sup_{y}|v_\eps(y)-v_0(y)|=0,
\qquad 
\lim_{\eps\to0}\xi_\eps=\xi_0.
\end{align*}

\vspace{-1ex}
\paragraph{Exactness}
We suppose that the multiplicative noise
$b_\eps:\R^d\to \R^{d\times d}$ 
is {\em exact}.  That is, $b_\eps(x)=[(dh_\eps)_x]^{-1}$
where $h_\eps:\R^d\to\R^d$ is a continuous
family of $C^3$ diffeomorphisms $h_\eps:\R^d\to\R^d$ (with $C^3$ norm uniform in
$\eps$ and $x\in\R^d$).

\begin{rmk} \label{rmk-setup}
(a) The exactness assumption on $b_\eps$ can be removed, but then additional assumptions are required on the fast dynamics.  When the fast dynamics $T_\eps$ is independent of $\eps$,
the corresponding result without exactness is proved 
for partially hyperbolic dynamics using standard pairs/martingale problems~\cite{Dolgopyat04}, and for
nonuniformly hyperbolic dynamics using rough path theory~\cite{KM16,KMsub}.

\noindent(b)  The global nature of the regularity assumptions in $x\in\R^d$ is easily relaxed, see for example~\cite[Section~3.1]{GM13b}.
\end{rmk}

Define $\hatx(t)=\x([t\eps^{-2}])$, $t\ge0$.
We are interested in weak convergence of $\hatx$ to a solution $X$ of an SDE.
Convergence is  in the space
$D([0,\infty),\R^d)$ of c\`adl\`ag functions (right-continuous functions with left-hand limits, see for example~\cite[Chapter~3]{Billingsley99})
with the supremum norm.

\begin{rmk}  \label{rmk-sep}
One technical issue is that $D([0,\infty),\R^d)$ is not separable in the supremum norm.  Since our limit processes have continuous sample paths, convergence in the supremum norm is equivalent to convergence in the standard Skorokhod topology which is metrizable and separable.  Hence whenever we apply results where separability is required, we can momentarily work in this topology.
\end{rmk}

\vspace{-1ex}
\paragraph{Dynamical assumptions}
So far, the assumptions on the fast-slow system~\eqref{eq-map} have been standard subject to the comments in
Remark~\ref{rmk-setup}.  Now we introduce mild assumptions on the fast dynamics that suffice for homogenization.

Define $\alpha_x:M\to\R^d$ for $x\in\R^d$,
\begin{align} \label{eq-alphax} 
\SMALL \alpha_x(y)=a_0(x,y)-\frac12\{(db_0)_u b_0(x)v_0(y)\}v_0(y).
\end{align}
Also, define the family of random elements
$W_\eps:(M,\mu_\eps)\to D([0,\infty),\R^d)$,
        \[
W_\eps(t)=\eps\sum_{j=0}^{[t/\eps^2]-1}v_\eps\circ T_\eps^j.
\]

\begin{description}

      \parskip=-3pt

\item[Uniform mean ergodicity (UME)]  
There exists $P:\R^d\to\R^d$ such that
\[
\lim_{\eps\to0}\int_M\Big|\eps^{1/2}\sum_{j=0}^{[\eps^{-1/2}]-1} \alpha_x\circ T_\eps^j\;-P(x)\Big|\,d\mu_\eps=0 \quad\text{for all $x\in\R^d$}.
\]
\item[Weak invariance principle (WIP)]
$W_\eps\to_{\mu_\eps} W$ in $D([0,\infty),\R^d)$ as $\eps\to0$,
where $W$ is Brownian motion with some covariance matrix $\Sigma\in\R^{d\times d}$.
\end{description}

\begin{thm} \label{thm-map}
Assume (UME) and (WIP).  Then $P$ is Lipschitz and
$\hatx\to_{\mu_\eps} X$ in $D([0,\infty),\R^d)$ as \mbox{$\epsilon\to0$} where $X$ is the solution to
the Stratonovich SDE
\begin{align} \label{eq-SDE}
dX=
P(X)\, dt+ b_0(X)\circ dW,
\quad X(0)=\xi.
\end{align}
\end{thm}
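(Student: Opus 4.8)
The plan is to reduce the homogenization statement to the $\eps$-independent case treated in~\cite{GM13b} by an iterated-map/martingale-problem argument, using (WIP) to control the rough driving term and (UME) to identify the drift correction. First I would establish that $P$ is Lipschitz: since $a_0$ is Lipschitz in $x$ uniformly in $y$, $b_0$ is $C^3$ with uniform norm, and $v_0$ is bounded, the map $x\mapsto \alpha_x(y)$ in~\eqref{eq-alphax} is Lipschitz in $x$ uniformly in $y$; the (UME) limit is then a uniform (in $x$) limit of Lipschitz functions with a common Lipschitz constant, hence Lipschitz. In particular $P(x)=\lim_{\eps\to0}\eps^{1/2}\sum_{j<\eps^{-1/2}}\alpha_x\circ T_\eps^j$ in $L^1(\mu_\eps)$, and by a telescoping/Cesàro argument together with the uniform bound $|\alpha_x|_\infty\le C(L)$ one upgrades this to $\eps^2\sum_{j=0}^{[t\eps^{-2}]-1}\alpha_x\circ T_\eps^j\to_{\mu_\eps} tP(x)$ in $D([0,\infty),\R^d)$ for each fixed $x$, and then uniformly over $x$ in compact sets by equicontinuity in $x$.

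Next I would set up the discrete approximation scheme exactly as in~\cite{GM13b}. Write $\hatx(t)=\x([t\eps^{-2}])$ and decompose the increments of $\hatx$ into: (i) the $\eps^2 a_\eps$ term, which by the regularity assumptions and the UME-type averaging contributes the drift; (ii) the $\eps\, b_\eps(\x)v_\eps$ term, which is the ``rough'' term; and (iii) the quadratic-variation correction arising from the multiplicative structure, which is where exactness of $b_\eps$ enters. The exactness assumption $b_\eps(x)=[(dh_\eps)_x]^{-1}$ lets one pass to the transformed slow variable $\z(n)=h_\eps(\x(n))$, for which the noise becomes essentially additive; in those coordinates the system is driven by $W_\eps$ plus a well-controlled drift, and~\eqref{eq-map} becomes amenable to the continuous-mapping / Skorokhod-representation machinery. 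Using (WIP) one has $W_\eps\to_{\mu_\eps}W$, and on a common probability space (Remark~\ref{rmk-sep} permits working in the separable Skorokhod topology) one shows $\z$-paths converge to the solution of the transformed SDE; the Itô–Stratonovich correction term $-\tfrac12\{(db_0)_ub_0(x)v_0(y)\}v_0(y)$ that was subtracted in~\eqref{eq-alphax} is precisely what reappears when changing back to Stratonovich form for $X=h_\eps^{-1}(\z)$, giving~\eqref{eq-SDE}.

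The technical heart is the tightness-and-identification step for the family $\{\hatx\}$: one must show the sequence of c\`adl\`ag processes is tight, that every subsequential limit solves the martingale problem associated with~\eqref{eq-SDE}, and that uniqueness (from Lipschitz $P$ and $C^3$ $b_0$) pins down the limit. Tightness of $\hatx$ follows from tightness of $W_\eps$ (given by (WIP)), the uniform bounds on $a_\eps$, $v_\eps$, $b_\eps$, and a Gronwall estimate controlling $\sup_{t\le T}|\hatx(t)|$; the identification uses (UME) to replace Birkhoff sums of $\alpha_{\x}$ by $tP(\x)$ along the limit, handling the $x$-dependence by the uniform-in-$x$ convergence established above and an approximation of $\alpha_{\x(\cdot)}$ by $\alpha$ evaluated at frozen values of the slow variable over short time windows (the standard ``freezing'' argument). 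The main obstacle I expect is this last freezing step: one must interchange the slow drift of $\x$ with the fast averaging of $\alpha_x\circ T_\eps^j$ while only having an $L^1$-rate from (UME) and no quantitative mixing for the family $T_\eps$; this is overcome by choosing an intermediate timescale (here $\eps^{-1/2}$, as in the statement of (UME)) on which $\x$ moves by $O(\eps^{3/2})$ yet enough fast iterations occur for the ergodic average to kick in, and then summing $O(\eps^{-3/2})$ such blocks over $[0,T\eps^{-2}]$ with errors that vanish as $\eps\to0$.
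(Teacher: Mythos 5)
Your first two paragraphs track the paper's actual proof closely: Lipschitzness of $P$ from the uniform-in-$y$ Lipschitz dependence of $\alpha_x$ on $x$; the passage to $\z(n)=h_\eps(\x(n))$ via exactness, with the second-order Taylor term of $h_\eps$ producing exactly the correction already built into $\alpha_x$, so that in the new coordinates the noise is additive and the transformed drift satisfies the same regularity hypotheses; and the freezing argument on blocks of $\eps^{-1/2}$ iterations, with the $x$-dependence handled on a ball $\{|x|\le Q\}$ by a finite net (equivalently your equicontinuity argument) and the complement controlled by tightness of $W_\eps$ coming from (WIP). One small slip: over a single block the slow variable moves by $O(\eps^{1/2})$ (the dominant term is $\eps|v_\eps|_\infty$ over $\eps^{-1/2}$ steps), not $O(\eps^{3/2})$; the total freezing error is still $O(\eps^{1/2})$, so nothing breaks, but the exponent you quote is wrong.

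Where you genuinely diverge is in declaring the ``technical heart'' to be tightness of $\{\hatx\}$ plus identification of subsequential limits through the martingale problem for~\eqref{eq-SDE}. That step is both unnecessary and, as stated, not justified by (UME) and (WIP): to verify that limit points solve the martingale problem with generator containing $\tfrac12\sum_{i,j}(b_0\Sigma b_0^T)^{ij}\partial_i\partial_j$ you need asymptotic conditional-increment (mixing or martingale-approximation) information about the family $T_\eps$, i.e.\ that the limiting noise is a martingale with respect to the filtration generated jointly with the slow variable; the purely path-level convergence $W_\eps\to_{\mu_\eps}W$ supplies no such adaptedness, and this is precisely the obstruction that exactness is assumed in order to bypass (cf.\ Remark~\ref{rmk-setup}(a), which delegates the non-exact case to standard-pair/martingale-problem or rough-path methods). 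The paper instead finishes with the continuous mapping theorem: in the $h_\eps$-coordinates one writes $\tildez(t)=\xi'+\int_0^t \tilde P(\tildez(s))\,ds+W_\eps(t)+Z_\eps(t)$, where $Z_\eps\to_{\mu_\eps}0$ is exactly the (UME) freezing estimate, and since the solution map $\mathcal{G}$ of the additive-noise integral equation $z=\xi'+u+\int_0^{\cdot}\tilde P(z)\,ds$ is continuous on $D([0,T],\R^d)$, weak convergence of the driver $W_\eps+Z_\eps$ to $W$ immediately gives $\tildez\to_{\mu_\eps}\mathcal{G}(W)$; transforming back by $h_0^{-1}$ (using $\sup_z|h_\eps^{-1}(z)-h_0^{-1}(z)|\to0$ and that Stratonovich calculus obeys the usual chain rule) yields~\eqref{eq-SDE}. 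If you insist on a subsequential formulation you can repair it, but only in the additive-noise coordinates (any joint limit $(\tilde Z,\tilde W)$ satisfies the integral equation pathwise, so $\tilde Z=\mathcal{G}(\tilde W)$ by uniqueness); applied directly to $\hatx$ with the multiplicative $b_0$, the martingale-problem identification is a genuine gap in your plan.
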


\begin{rmk} \label{rmk-UME}
Suppose that 
\begin{itemize}

\parskip = -2pt
\item[(a)] $\mu_\eps\to_w\mu_0$ as $\eps\to0$
(statistical stability).
\item[(b)] $\BIG\lim_{\eps\to0}\int_M\Big|\eps^{1/2}\sum_{j=0}^{[\eps^{-1/2}]-1} \alpha_x\circ T_\eps^j\;-\;\int_M \alpha_x\,d\mu_\eps\Big|\,d\mu_\eps=0$
for all $x\in\R^d$.
\end{itemize}

Then (UME) holds, and
\begin{align*}
P(x) & \SMALL  =\int_M a_0(x,y)\,d\mu_0(y)-\frac12 \int_M\{(db_0)_{x}b_0(x)v_0(y)\}v_0(y)\,d\mu_0(y)
\\ & \SMALL  =\int_M a_0(x,y)\,d\mu_0(y)-\frac12\sum  b_0^{\alpha\gamma}(x)(\partial_{x_\alpha} b_0^\beta)(x)\!\int_M v_0^\beta v_0^\gamma\,d\mu_0.
\end{align*}
Here, $b_0^{\alpha\gamma}$ denotes the $(\alpha,\gamma)$'th entry of $b_0$
and $b_0^\beta$ denotes the $\beta$'th column of $b_0$,
while the summation is over indices $\alpha,\beta,\gamma=1,\dots,d$.
\end{rmk}

\begin{rmk} \label{rmk-map}
(a) 
We focus attention on weak convergence with respect to the family of invariant measures $\mu_\eps$.  If we assume {\em strong statistical stability} (so $\mu_\eps$ is absolutely continuous with respect to a reference measure $\rho$ for all $\eps$ and $d\mu_\eps/d\rho\to d\mu_0/d\rho$ in $L^1$), then it is immediate from Theorem~\ref{thm-map} that $x_\eps\to_{\mu_0}X$.  We will return to the issue of weak convergence with respect to a wider range of measures in subsequent work.
 
\vspace{1ex}
\noindent(b) 
The WIP is a necessary condition for Theorem~\ref{thm-map} since it is equivalent to the case $a_\eps\equiv0$, $b_\eps\equiv I_d$.
\end{rmk}

The remainder of this section is concerned with the proof of Theorem~\ref{thm-map}.  We 
deal first with the special case $b_\eps\equiv I_d$,  and then with the general case.

\vspace{-1ex}
\paragraph{The special case $b_\eps\equiv I_d$}

We extend the arguments in~\cite{GM13b,MS11} developed for the situation where $T_\eps$ is independent of~$\eps$.

\begin{lemma} \label{lem-map}
Theorem~\ref{thm-map} holds 
in the case $b_\eps\equiv I_d$.
\end{lemma}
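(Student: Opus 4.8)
The plan is to reduce the fast--slow recursion~\eqref{eq-map} with $b_\eps\equiv I_d$ to a continuous-time problem and apply the (WIP) together with (UME) via a standard ``continuous mapping plus stochastic averaging'' argument, in the spirit of~\cite{GM13b,MS11}. First I would write $\x(n+1)-\x(n)=\eps^2 a_\eps(\x(n),\y(n))+\eps\,v_\eps(\y(n))$ and sum to get, for $t\ge0$,
\[
\hatx(t)=\xi_\eps+\eps^2\sum_{j=0}^{[t\eps^{-2}]-1}a_\eps(\x(j),T_\eps^j\y)+\eps\sum_{j=0}^{[t\eps^{-2}]-1}v_\eps\circ T_\eps^j.
\]
The second term is exactly $W_\eps(t)$, which converges weakly (in the sup norm on compacts) to Brownian motion $W$ with covariance $\Sigma$ by (WIP). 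The first term is where (UME) enters: one splits the Birkhoff-type sum of $a_\eps(\x(j),\cdot)$ over blocks of length $[\eps^{-1/2}]$ (so there are $\sim t\eps^{-3/2}$ blocks), freezes the slow variable $\x$ at the start of each block, and uses (UME) (with $a_0$ in place of $\alpha_x$, since $b_0=I_d$ makes the It\^o--Stratonovich correction term vanish and $\alpha_x=a_0(x,\cdot)$) to replace each block sum by $[\eps^{-1/2}]\,P(\x(\cdot))$ up to an $L^1(\mu_\eps)$ error that is $o(1)$ per block; the block length $[\eps^{-1/2}]$ is chosen so that $\x$ moves by only $O(\eps^{1/2})$ across a block (from the $\eps\,v_\eps$ increments accumulating $\sqrt{\eps^{-1/2}}\cdot\eps=\eps^{3/4}$ and the $\eps^2 a_\eps$ increments accumulating $\eps^{3/2}$), so that freezing $\x$ costs only $O(\eps^{1/2})\,\Lip a_\eps$ per step and hence $o(1)$ overall. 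Summing the block contributions gives $\eps^2\sum_{j<[t\eps^{-2}]}a_\eps(\x(j),T_\eps^j\y)=\int_0^t P(\hatx(s))\,ds+o(1)$ in probability, uniformly on compact time intervals.

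Next I would package this as a convergence statement for the pair $(\hatx,W_\eps)$. Since $W_\eps\to_{\mu_\eps}W$ and the remainder terms are $o(1)$ uniformly on compacts, one gets that any weak subsequential limit $X$ of $\hatx$ satisfies, jointly with the limiting $W$, the closed integral equation $X(t)=\xi_0+\int_0^t P(X(s))\,ds+W(t)$. Lipschitz continuity of $P$ follows from (UME): $|P(x)-P(x')|\le\liminf_\eps\eps^{1/2}\sum_{j<[\eps^{-1/2}]}\int_M|a_0(x,\cdot)-a_0(x',\cdot)|\circ T_\eps^j\,d\mu_\eps\le \Lip a_0\,|x-x'|$, using $T_\eps$-invariance of $\mu_\eps$. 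Hence the integral equation $X=\xi_0+\int_0^\cdot P(X)\,ds+W$ has a unique solution for each Brownian path $W$ (Picard iteration, since $P$ is Lipschitz), which identifies the It\^o SDE $dX=P(X)\,dt+dW$, $X(0)=\xi$ --- and with $b_0\equiv I_d$ there is no distinction between It\^o and Stratonovich, so this is~\eqref{eq-SDE}. Tightness of $\{\hatx\}$ in $D([0,\infty),\R^d)$ (needed to extract subsequential limits and to upgrade ``every subsequence has a sub-subsequence converging to the same $X$'' into full convergence) comes from the uniform bounds: $|\hatx(t)-\hatx(s)|\le \eps^2|t-s|\eps^{-2}L+|W_\eps(t)-W_\eps(s)|+o(1)$, and $\{W_\eps\}$ is tight by (WIP).

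The main obstacle is the averaging step for the drift: controlling the error made by freezing the slow variable $\x$ over each block of length $[\eps^{-1/2}]$ and summing these errors over the $\sim t\eps^{-3/2}$ blocks without any mixing or regularity hypothesis on the fast dynamics beyond (UME). The point is that (UME) is only an $L^1(\mu_\eps)$-convergence statement for a fixed frozen $x$, so one must (i) discretize $x$-space or exploit the uniform Lipschitz bound on $a_\eps$ to pass from the countably many frozen values to all of them, (ii) check that the per-block $L^1$ errors, which are $o(1)$ individually but must be summed $\eps^{-3/2}$ times after multiplication by the block weight $\eps^2\cdot[\eps^{-1/2}]=\eps^{3/2}$, indeed telescope to $o(1)$ --- this is a Cesàro/uniform-integrability argument --- and (iii) handle the additional error from $a_\eps$ versus $a_0$ using $\sup_{x,y}|a_\eps(x,y)-a_0(x,y)|\to0$. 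All of these are routine given the stated regularity and (UME), but assembling them cleanly is the technical heart of the proof; the rest is the continuous-mapping/SDE-identification machinery already used in~\cite{GM13b}.
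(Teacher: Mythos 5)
Your plan follows essentially the same route as the paper's proof: sum the recursion, isolate $W_\eps$ (handled by (WIP)), and treat the drift fluctuation by blocks of length $[\eps^{-1/2}]$ with the slow variable frozen, using the uniform Lipschitz bound plus a finite net (and boundedness in probability of $\max_{[0,T]}|\hatx|$, via tightness of $W_\eps$) to reduce to (UME) at fixed points, then identify the limit as the solution of $dX=P(X)\,dt+dW$ with $P$ Lipschitz. The only difference is cosmetic packaging — the paper applies the continuous solution map $\mathcal{G}$ of the integral equation directly to $W_\eps+Z_\eps$, whereas you extract subsequential limits and invoke Picard uniqueness — so the proposal is correct and matches the paper's argument.
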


\begin{proof}
Note that $\alpha_x(y)=a_0(x,y)$ is uniformly Lipschitz in $x$ and hence
that $P$ is Lipschitz with constant $L$.
Define $\tilde a(x,y)=a_0(x,y)-P(x)$.

To prove weak convergence in $D([0,\infty),\R^d)$, it suffices to prove convergence in $D([0,T],\R^d)$ for each fixed $T\ge1$.
Let $\delta(\epsilon)=\sup_{x,y}|a_\eps(x,y)-a_0(x,y)|+|\xi_\eps-\xi|$, so
$\lim_{\epsilon\to0}\delta(\epsilon)=0$.
Write
\[
\x(n)=\xi_\eps+ \epsilon^2\sum_{j=0}^{n-1}a_\eps(\x(j),\y(j))
+ \epsilon\sum_{j=0}^{n-1}v_\eps(\y(j)).
\]
Hence
\begin{align*}
\hatx(t)& =\xi+
\epsilon^2\sum_{j=0}^{[t\epsilon^{-2}]-1}a_0(\x(j),\y(j))
+W_\eps(t)+Z_{\eps,1}(t) \\ & =
\xi +\epsilon^2\sum_{j=0}^{[t\epsilon^{-2}]-1}P(\hatx(\epsilon^2j))
+W_\eps(t)+ Z_{\eps,1}(t)+
Z_{\eps,2}(t),
\end{align*}
where
$W_\eps(t)=\epsilon\sum_{j=0}^{[t\epsilon^{-2}]-1}v_\eps(\y(j))$ satisfies the WIP and
\[
|Z_{\eps,1}(t)|\le T\delta(\epsilon), \quad
Z_{\eps,2}(t)=
\epsilon^2\sum_{j=0}^{[t\epsilon^{-2}]-1} \tilde a(\x(j),\y(j)).
\]
For $t$ an integer multiple of $\epsilon^2$,
the term $\epsilon^2\sum_{j=0}^{[t\epsilon^{-2}]-1}P(\hatx(\epsilon^2j))$ is the Riemann sum of a piecewise constant function and is precisely
$\int_0^t P(\hatx(s))\,ds$.
For general $t$,
\[
\epsilon^2\sum_{j=0}^{[t\epsilon^{-2}]-1}P(\hatx(\epsilon^2j))=
\int_0^t P(\hatx(s))\,ds+Z_{\eps,3}(t),
\]
where $|Z_{\eps,3}(t)|\le \epsilon^2|P|_\infty\le \eps^2L$.
Altogether,
\[
\SMALL \hatx(t)=\xi+
\int_0^t P(\hatx(s))\,ds+
W_\eps(t)+Z_\eps(t), 
\]
where $Z_\eps=Z_{\eps,1}+ Z_{\eps,2}+ Z_{\eps,3}$.

We show below that
$Z_{\eps,2}\to_{\mu_\eps}0$ in $D([0,T],\R^d)$.
It follows that $W_\eps+ Z_\eps\to_{\mu_\eps} W$ in $D([0,T],\R^d)$.

Now consider the continuous map
$\mathcal{G}:D([0,T],\R^d)\to D([0,T],\R^d)$ given by $\mathcal{G}(u)=z$
where $z$ is the unique solution to the integral equation
\[
\SMALL z(t)=\xi+u(t)+\int_0^t P(z(s))\,ds.  
\]
Define $\z= \mathcal{G}(W_\eps+Z_\eps)$.
Since continuous maps preserve weak convergence, it follows that 
$\z\to_{\mu_\eps} \mathcal{G}(W)$.   But $z_\eps=\x$ by uniqueness of solutions,
so $\x\to_{\mu_\eps} \mathcal{G}(W)$.
The result follows since $X=\mathcal{G}(W)$ satisfies the SDE
$dX=P(X)\,dt+dW$, $X(0)=\xi$.

It remains to show that
$Z_{\eps,2}\to_{\mu_\eps}0$ in $D([0,T],\R^d)$.
Note that $|\tilde a|_\infty\le 2L$ and $\Lip\,\tilde a\le 2L$.   
Let $N=[t\eps^{-3/2}]$ and write $Z_{\eps,2}=Y_\eps+I_0$, where 
\begin{align*}
Y_\eps(t) & =\eps^2\!\!\!\!\sum_{0\le j< N\eps^{-1/2}} \tilde a(\x(j),\y(j)), \qquad
I_0(t) =\eps^2\!\!\!\!\!\!\!\!\sum_{N\eps^{-1/2}\le j\le [t\eps^{-2}]-1} \tilde a(\x(j),\y(j)).  
\end{align*}
We have 
\begin{align} \label{eq-I0}
|I_0(t)|\le \eps^{3/2}|\tilde a|_\infty \le 2L\eps^{3/2}.
\end{align}

We now estimate $Y_\eps$ as follows:
\begin{align*}
& Y_\eps(t) = \eps^2\sum_{n=0}^{N-1} \sum_{n\eps^{-1/2}\le j<(n+1)\eps^{-1/2}} 
\tilde a(\x(j),\y(j))=I_1+I_2 \\
&  I_1=\eps^2\sum_{n=0}^{N-1}
\sum_{n\eps^{-1/2}\le j<(n+1)\eps^{-1/2}}\big(\tilde a(\x(j),\y(j))-\tilde a(\x(n\eps^{-1/2}),\y(j))\big) \\ &
 I_2 =\eps^2\sum_{n=0}^{N-1}
\sum_{n\eps^{-1/2}\le j<(n+1)\eps^{-1/2}} \tilde a(\x(n\eps^{-1/2}),\y(j)).
\end{align*}

For $n\eps^{-1/2}\le j<(n+1)\eps^{-1/2}$, we have
$|\x(j)-\x(n\eps^{-1/2})| \le 
(|a_\eps|_{\infty}+|v_\eps|_{\infty})\eps^{1/2} \le 2L\eps^{1/2}$.
Hence 
\begin{align} \label{eq-I1}
|I_1|\le N\eps^{3/2} \Lip\,\tilde a\,2L\eps^{1/2}\le 
4L^2T\eps^{1/2}.
\end{align}

Next, $I_2  =\eps^{3/2}\sum_{n=0}^{N-1}J_n$, where
\begin{align*}
J_n  &= \eps^{1/2}\sum_{n\eps^{-1/2}\le j<(n+1)\eps^{-1/2}} \tilde a(\x(n\eps^{-1/2}),\y(j)).
\end{align*}
Hence
\begin{align} \label{eq-J}
|I_2|\le \eps^{3/2}\sum_{n=0}^{[T\eps^{-3/2}]-1}|J_n|.
\end{align}

For $u\in\R^d$ fixed, define
\begin{align*}
	\tildeJ_n(u)  =
 \eps^{1/2}\sum_{n\eps^{-1/2}\le j<(n+1)\eps^{-1/2}} \tilde a(u,\y(j))
& = \eps^{1/2}\sum_{n\eps^{-1/2}\le j<(n+1)\eps^{-1/2}} \alpha_u\circ T_\eps^j
\;-\;P(u).
\end{align*}
Note that $\tildeJ_0$ has $[\eps^{-1/2}]$ terms, and
$\tildeJ_n(u)$ has at most one term more or one term less than $\tildeJ_0(u)$.
Hence
\[
\SMALL \int_M|\tildeJ_n(u)|\,d\mu_\eps =\int_M|\tildeJ_0(u)|\,d\mu_\eps+E_n(u),
\quad\text{where}\quad |E_n(u)|\le \eps^{1/2}|\tilde a|_\infty \le 2L\eps^{1/2}.
\]

Let $Q>0$ and write $I_2=K_{Q,1}+K_{Q,2}$ where
\begin{align*}
& K_{Q,1}=I_21_{B_\eps(Q)}, \quad
K_{Q,2}=I_21_{B_\eps(Q)^c}, \quad
 B_\eps(Q)=\big\{\max_{[0,T]}|\x|\le Q\big\}.
\end{align*}

For any $\sigma>0$, there exists a finite subset $S\subset\R^d$ such that
$\dist(x,S)\le \sigma/(2L)$ for any $x$ with $|x|\le Q$.
Then 
$1_{B_\eps(Q)}|J_n|\le \sum_{u\in S}|\tildeJ_n(u)|+\sigma$
for all $n\ge0$, $\eps>0$,
Hence by~\eqref{eq-J},
\begin{align*}
 \int_M\max_{[0,T]}|K_{Q,1}|\,  d\mu_\eps &
 \le \eps^{3/2}\sum_{n=0}^{[T\eps^{-3/2}]-1}
\sum_{u\in S}\int_M|\tildeJ_n(u)|\,d\mu_\eps+T\sigma
\\ & = \eps^{3/2}\sum_{n=0}^{[T\eps^{-3/2}]-1}
\sum_{u\in S}\Big(\int_M|\tildeJ_0(u)|\,d\mu_\eps+E_n(u)\Big)+T\sigma
  \\ & \le T \sum_{u\in S}\int_M|\tildeJ_0(u)|\,d\mu_\eps
+2\eps^{1/2}T|S|L+T\sigma.
\end{align*}
By (UME), $\int_M|\tildeJ_0(u)|\,d\mu_\eps\to0$ as $\eps\to0$ for each $u$.
Since $\sigma>0$ is arbitrary, we obtain for each fixed $Q$ that
$\max_{[0,T]}|K_{Q,1}|\to0$ in $L^1(\mu_\eps)$, and hence in probability, as $\eps\to0$.

Next, since $\x-W_\eps$ is bounded on $[0,T]$, for $Q$ sufficiently large
\[
\mu_\eps\big\{\max_{[0,T]}|K_{Q,2}|>0\big\}\le 
\mu_\eps\big\{\max_{[0,T]}|\x|\ge Q\big\}\le 
\mu_\eps\big\{\max_{[0,T]}|W_\eps|\ge Q/2\big\}.
\]
Fix $c>0$.  Increasing $Q$ if necessary, we can arrange that
$\mu_\eps\{\max_{[0,T]}|W|\ge Q/2\}<c/4$.
By the continuous mapping theorem,
$\max_{[0,T]}|W_\eps|\to_d \max_{[0,T]}|W|$.
Hence there exists $\eps_1>0$ such that
$\mu_\eps\{\max_{[0,T]}|W_\eps|\ge Q/2\}<c/2$ for all $\eps\in(0,\eps_1)$.
For such $\eps$,
\[
\mu_\eps\big\{\max_{[0,T]}|K_{Q,2}|>0\big\}<c/2.
\]
Shrinking $\eps_1$ if necessary, we also have that
$\mu_\eps\{\max_{[0,T]}|K_{Q,1}|>c/2\}<c/2$.
Hence $\mu_\eps\{\max_{[0,T]}|I_2|>c\}<c$, and so 
$\max_{[0,T]}|I_2|\to0$ in probability.  
Combining this with estimates~\eqref{eq-I0} and~\eqref{eq-I1}, we obtain that 
$\max_{[0,T]}|Z_{\eps,2}|\to0$ in probability as required.~
\end{proof}

\vspace{-1ex}
\paragraph{The case of exact noise}
Now we consider the general case of exact multiplicative noise, following~\cite{GM13b}.

\begin{pfof}{Theorem~\ref{thm-map}}
Define $\z(n)= h_\eps(\x(n))$.
Using Taylor's theorem to expand the $C^3$ map $h_\eps$, we obtain
\begin{align} \label{eq-r}
\nonumber    & \z(n+1)  -\z(n)    =  h_\eps(x^{(\epsilon)}(n+1))-h_\eps(x^{(\epsilon)}(n)) \\[.75ex]
 & = (dh_\eps)_{\x(n)}\big(\x(n+1)-\x(n)\big) 
\\[.75ex]\nonumber  & 
+ {\SMALL\frac12} \{(d^2h_\eps)_{\x(n)}
(\x(n+1)-\x(n))\} (\x(n+1)-\x(n))
 +o(|\x(n+1)-\x(n)|^2). 
\end{align}
Here, we are identifying $(d^2h)_x$ as an element of
$L(\R^d,L(\R^d,\R^d))$ for each $x\in\R^d$.
The last term is uniformly $o(\epsilon^2)$.

Substituting for $\x(n+1)-\x(n)$ using equation~\eqref{eq-map} and the fact that
$b_\eps=[dh_\eps]^{-1}$, equation~\eqref{eq-r} becomes
\begin{align*}
 \z(n+1)- \z(n)  & = \epsilon^2 
\Big\{(dh_\eps)_{\x(n)}a_\eps(\x(n),\y(n))  \\ &
\!\!\!\!  \!\!\!\!  \!\!\!\!  \!\!\!\!
\!\!\!\!  \!\!\!\!  \!\!\!\!  \!\!\!\!
+{\SMALL\frac12} \{(d^2h_\eps)_{\x(n)}
b_\eps(\x(n))v_\eps(\y(n))\}
b_\eps(\x(n))v_\eps(\y(n))
+o(1)\Big\}
+\eps v_\eps(\y(n)).
\end{align*}
In other words,
\[
 \z(n+1)- \z(n)  = \epsilon^2 A_\eps(\z(n),\y(n))+ \eps v_\eps(\y(n)), 
\]
where
\[
A_\eps(z,y)=(dh_\eps)_{h_\eps^{-1}\!z}\Big\{a_\eps(h_\eps^{-1}z,y)-\frac12
\{(db_\eps)_{h_\eps^{-1}\!z}b_\eps(h_\eps^{-1}\!z)v_\eps(y)\}
v_\eps(y)
+o(1)\Big\}
\]
uniformly in $z,y$ as $\epsilon\to0$.

The regularity assumptions on $v_\eps$, $a_\eps$ and $h_\eps$ ensure that 
$A_\eps$ is bounded and globally Lipschitz in $z$.
Similarly, it is easily checked that
$\lim_{\eps\to0}\sup_{z,y}|A_\eps(z,y)-A_0(z,y)|=0$.
Notice also that $A_0(z,y)=(dh_0)_{h_0^{-1}z}\alpha_{h_0^{-1}z}(y)$.

Hence we are in the situation of Lemma~\ref{lem-map}, and it follows
that $\tildez(t)=\z([t\epsilon^{-2}])$
converges weakly to solutions $Z$ of the SDE
\begin{align} \label{eq-SDE_Z_map}
dZ=(dh_0)_{h_0^{-1}z}P(h_0^{-1}Z)\,dt+dW,
\end{align}
where $P(Z)$ is the limit function in (UME).

Next
\[
\SMALL\sup_t|h_\eps^{-1}(\z(t))-h_0^{-1}(\z(t))|
\le \sup_z|h_\eps^{-1}(z)-h_0^{-1}(z)|\to0,
\]
so by the continuous mapping theorem, 
\[
\x=h_\eps^{-1}(\z)=h_0^{-1}(\z)+\{h_\eps^{-1}(\z)-h_0^{-1}(\z)\}\to_{\mu_\eps} h_0^{-1}(Z).
\]
Hence it remains to determine $X=h_0^{-1}(Z)$.
Clearly $X(0)=\xi$.
Since the Stratonovich integral transforms according to the standard laws of calculus, 
\begin{align*}
dX & =[(dh_0)_X]^{-1}\circ dZ
 = [(dh_0)_X]^{-1}\circ [(dh_0)_{h_0^{-1}Z}P(h_0^{-1}Z)\,dt+\,dW]
\\ & =P(X)\,dt+b_0(X)\circ dW,
\end{align*}
as required.
\end{pfof}

\section{Homogenization for uniform families of fast-slow systems}
\label{sec-homog-uniform}

In this section, we apply the abstract homogenization theorem from 
Section~\ref{sec-homog} to the case where the fast dynamics is generated by
a uniform family of nonuniformly hyperbolic transformations.
We consider first the noninvertible case (Subsection~\ref{sec-homog-NUE})
and then the invertible case (Subsection~\ref{sec-homog-NUH}).

\subsection{Nonuniformly expanding fast dynamics}
\label{sec-homog-NUE}

Let $(M,d_M)$ be a bounded metric space with finite Borel measure $\rho$.
For each $\eps\in[0,\eps_0)$ we suppose that $T_\eps:M\to M$ is a nonuniformly
expanding map as in Section~\ref{subsec-NUE},
with induced map $F_\eps=T_\eps^{\tau_\eps}:Y_\eps\to Y_\eps$,  and absolutely continuous
ergodic $T_\eps$-invariant and $F_\eps$-invariant Borel probability measures $\mu_\eps$ and $\mu_{Y_\eps}$.
(The 
metric space $(M,d_M)$ and finite Borel measure $\rho$ are fixed independent of $\eps$.  This is natural for the purposes of this current section, but is easily relaxed, see Remark~\ref{rmk-Sigma}.)

We assume that $T_\eps$ is 
a uniform family of order $p\ge2$ (cf.\ Section~\ref{sec-NUElimit}),
so the  various constants in the definition of nonuniformly expanding can be chosen
independent of $\eps\in[0,\eps_0)$, and $\{\tau_\eps^2,\,\eps\in[0,\eps_0)\}$
is uniformly integrable.
Moreover, we suppose that $\mu_0$ is {\em statistically stable}:
$\mu_\eps\to_w\mu_0$ as $\eps\to0$.

Let $v_\eps:M\to\R^d$, $\eps\in[0,\eps_0)$, be a family of H\"older observables with
$\int_M v_\eps\,d\mu_\eps=0$.
We require that $v_\eps$ and $T_\eps$ satisfy
\begin{align} \label{eq-v}
& \SMALL \sup_{\eps\in[0.\eps_0)}\|v_\eps\|_\eta<\infty, \qquad
\lim_{\eps\to0}|v_\eps-v_0|_\infty=0,
\end{align}
and
\begin{align}
\label{eq-tech}
& \SMALL \int_M v_0\circ T_0^j\,(v_0\circ T_0^k)^T\,(d\mu_\eps-d\mu_0)\to 0, \qquad
 T_\eps^j\to_{\mu_\eps}T_0^j, 
\end{align} 
for all $j,k\ge0$, as $\eps\to0$.
(The last part of condition~\eqref{eq-tech} means that
$\mu_\eps\{y\in M:d_M(T_\eps^jy,T_0^jy)>a\}\to0$ for all $a>0$.)

Consider the family of fast-slow equations~\eqref{eq-map} where
$\y(n+1)=T_\eps\y(n)$.
We assume that $a_\eps$ and $b_\eps$ satisfy the regularity conditions
in
Section~\ref{sec-homog} and that $b_\eps$ is exact.
Let $\hat x_\eps=x_\eps([t\eps^{-2]}])$.

\begin{thm} \label{thm-NUE}
Let $P(x)=
\int_M a_0(x,y)\,d\mu_0(y)-\frac12 \int_M\{(db_0)_{x}b_0(x)v_0(y)\}v_0(y)\,d\mu_0(y)$.
Let $W$ denote $d$-dimensional Brownian motion with covariance
\[ \SMALL
\Sigma=
\lim_{n\to\infty}\frac1n\int_M 
\big(\sum_{j=0}^{n-1}v_0\circ T_0^j\big)
\big(\sum_{j=0}^{n-1}v_0\circ T_0^j\big)^T\,d\mu_0.
\]

Then 
$\hatx\to_{\mu_\eps} X$ in $D([0,\infty),\R^d)$ as \mbox{$\epsilon\to0$} where $X$ is the solution to
the Stratonovich SDE
\begin{align*} 
dX=
P(X)\, dt+ b_0(X)\circ dW,
\quad X(0)=\xi.
\end{align*}
\end{thm}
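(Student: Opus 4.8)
The plan is to obtain Theorem~\ref{thm-NUE} as a special case of the abstract homogenization theorem, Theorem~\ref{thm-map}: it suffices to verify the two dynamical hypotheses (UME) and (WIP) for the family $T_\eps$, $v_\eps$, for then the function $P$ produced by (UME) is exactly the one in the statement, and Theorem~\ref{thm-map} delivers the stated Stratonovich SDE together with $\hatx\to_{\mu_\eps}X$. Since $T_\eps$ is a uniform family of nonuniformly expanding maps of order $p\ge2$ and $\sup_\eps\|v_\eps\|_\eta<\infty$, all the estimates of Section~\ref{sec-NUElimit} (Propositions~\ref{prop-mr}, \ref{prop-chi}, \ref{prop-UI}, Corollaries~\ref{cor-moment}, \ref{cor-Sigma}, \ref{cor-second}, Theorem~\ref{thm-WIP}) are available with constants uniform in $\eps$; in particular $\{\tau_\eps^2\}$ is uniformly integrable and the covariances $\Sigma_\eps$ of $(T_\eps,v_\eps)$ from~\eqref{eq-Sigma} are defined and uniformly bounded.

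For (WIP) the key step is to prove $\Sigma_\eps\to\Sigma$ as $\eps\to0$, where $\Sigma=\Sigma_0$ is the covariance of $(T_0,v_0)$ in the statement. By Remark~\ref{rmk-unifSigma} the convergence $k^{-1}\int_M\big(\sum_{j<k}v_\eps\circ T_\eps^j\big)\big(\sum_{j<k}v_\eps\circ T_\eps^j\big)^{\!T}d\mu_\eps\to\Sigma_\eps$ is uniform in $\eps$, so given $\delta>0$ one may fix a horizon $k$ for which this difference is $<\delta$ simultaneously for all $\eps\in[0,\eps_0)$. It then remains to pass to the limit $\eps\to0$ in the fixed finite sum, which reduces to showing $\int_M(v_\eps\circ T_\eps^j)(v_\eps\circ T_\eps^\ell)^{\!T}d\mu_\eps\to\int_M(v_0\circ T_0^j)(v_0\circ T_0^\ell)^{\!T}d\mu_0$ for each $j,\ell<k$. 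Here I would first replace $v_\eps$ by $v_0$ (using $|v_\eps-v_0|_\infty\to0$ and $\|v_\eps\|_\eta\ll1$), then $T_\eps^j$ by $T_0^j$ (using that $v_0$ is globally H\"older, so $|v_0\circ T_\eps^j-v_0\circ T_0^j|\le|v_0|_\eta\,d_M(T_\eps^j\cdot,T_0^j\cdot)^\eta\to0$ in $\mu_\eps$-measure by the second part of~\eqref{eq-tech}, and a uniformly bounded sequence tending to $0$ in $\mu_\eps$-probability tends to $0$ in $L^1(\mu_\eps)$), and finally $\mu_\eps$ by $\mu_0$ via the first part of~\eqref{eq-tech}. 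This gives $\Sigma_\eps\to\Sigma$, and since the proof of Proposition~\ref{prop-limit} applies verbatim to the $\eps$-indexed family (with $n^{-1/2}$ replaced by $\eps$ and $n\to\infty$ by $\eps\to0$, all estimates being uniform), we obtain $W_\eps\to_{\mu_\eps}W$, Brownian motion with covariance $\Sigma$.

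For (UME) I would split $\alpha_x=\alpha_x^{(1)}+\alpha_x^{(2)}$, where $\alpha_x^{(1)}(y)=-\tfrac12\{(db_0)_xb_0(x)v_0(y)\}v_0(y)$ and $\alpha_x^{(2)}(y)=a_0(x,y)$. Since $b_\eps$ is exact with the $C^3$-norm of $h_0$ bounded, $\alpha_x^{(1)}$ is H\"older with $\|\alpha_x^{(1)}\|_\eta\le C\|v_0\|_\eta^2$, uniformly in $x$. By Remark~\ref{rmk-UME} it is enough to verify statistical stability (assumed) and $\eps^{1/2}\sum_{j<[\eps^{-1/2}]}\big(\alpha_x-\int_M\alpha_x\,d\mu_\eps\big)\circ T_\eps^j\to0$ in $L^1(\mu_\eps)$, whereupon $P(x)=\int_M\alpha_x\,d\mu_0$, the formula in the statement. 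Writing $N=[\eps^{-1/2}]$: for $\alpha_x^{(1)}$ the $p\ge2$ moment estimate of Lemma~\ref{lem-moment} (using $2(p-1)\ge2$) bounds the $L^2(\mu_\eps)$-norm of $\sum_{j<N}(\alpha_x^{(1)}-\int\alpha_x^{(1)}d\mu_\eps)\circ T_\eps^j$ by $\ll\|v_0\|_\eta^2\,N^{1/2}$, so after multiplying by $\eps^{1/2}\sim N^{-1}$ it is $\ll\|v_0\|_\eta^2\,\eps^{1/4}\to0$; for the merely continuous, bounded $\alpha_x^{(2)}=a_0(x,\cdot)$ I would fix $\delta>0$, approximate it within $\delta$ in $L^\infty$ by a H\"older function, treat the H\"older part as for $\alpha_x^{(1)}$, and bound the remainder's contribution by $2\delta\,\eps^{1/2}N\sim2\delta$ in $L^1(\mu_\eps)$; letting $\delta\to0$ completes (UME).

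With both hypotheses verified, Theorem~\ref{thm-map} gives the conclusion. I expect the main obstacle to be the covariance identification $\Sigma_\eps\to\Sigma$: the fluctuating part of the Birkhoff sums is controlled uniformly by the martingale--coboundary estimates of Section~\ref{sec-NUElimit}, but pinning down the limiting covariance genuinely requires the stability inputs~\eqref{eq-v} and~\eqref{eq-tech} together with statistical stability, and one must pass to the limit in the finite-horizon Birkhoff correlations even though $T_0$ is not assumed continuous. A secondary technical point is the non-H\"older observable $a_0(x,\cdot)$ in (UME), which is handled by the brief approximation argument above.
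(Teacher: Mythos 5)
Your proposal is correct and follows essentially the same route as the paper: reduce to Theorem~\ref{thm-map}, verify (WIP) by combining Proposition~\ref{prop-limit} with the continuity $\Sigma_\eps\to\Sigma_0$ (proved, exactly as you do, by using the uniformity in Remark~\ref{rmk-unifSigma} to fix a finite horizon and then passing to the limit in the finite Birkhoff correlations via~\eqref{eq-v}, both parts of~\eqref{eq-tech}, and bounded convergence -- the paper's Proposition~\ref{prop-Sigma} works with $S_Nv_\eps S_Nv_\eps^T$ rather than term by term, which is the same computation), and verify (UME) via Remark~\ref{rmk-UME} together with the uniform moment bound of Lemma~\ref{lem-moment} applied to $\beta_{x,\eps}=\alpha_x-\int_M\alpha_x\,d\mu_\eps$ over blocks of length $[\eps^{-1/2}]$. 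The one place you genuinely diverge is the treatment of $a_0(x,\cdot)$ in (UME): the paper simply asserts that $\beta_{x,\eps}$ is H\"older with $\|\beta_{x,\eps}\|_\eta\ll\|v_0\|_\eta^2$, which tacitly uses H\"older regularity of $a_0$ in $y$ beyond the stated continuity assumption, whereas you split off $a_0(x,\cdot)$ and approximate it uniformly by H\"older functions before applying the moment bound. Your variant is more careful on this point and buys the conclusion under weaker regularity of $a_0$, at the cost of one small caveat: sup-norm approximation of a bounded continuous function by Lipschitz/H\"older functions (e.g.\ by inf-convolution) requires uniform continuity of $a_0(x,\cdot)$, which holds for instance when $M$ is compact but is not automatic on a general bounded metric space; you should state that hypothesis (or compactness of $M$) explicitly when running the approximation step.
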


\begin{rmk} \label{rmk-sss}  
In very general situations,~\cite{Alves04,AlvesViana02} show that $\mu_0$ is strongly statistically stable.  The first part of condition~\eqref{eq-tech} follows immediately.
Moreover, in the conclusion of Theorem~\ref{thm-NUE} we obtain in addition that
$\hat x_\eps\to_{\mu_0}X$ by Remark~\ref{rmk-map}(a).
\end{rmk}

\begin{examps}  It is straightforward to choose the examples in Section~\ref{sec-NUElimit} to be strongly statistically stable.  Theorem~\ref{thm-NUE}
and Remark~\ref{rmk-sss} then apply.

For instance, in the case of the intermittent maps, Example~\ref{ex-LSV2},
fix $\gamma_0\in (0,\frac12)$ and choose $\gamma_\eps\to\gamma_0$.
Let $T_\eps$, $0\le \eps<\eps_0$, be the corresponding family of intermittent maps.  Then $\mu_0$ is strongly statistically stable by~\cite{BaladiTodd16,Korepanov16},
while (UME) and (WIP) follow from Section~\ref{sec-NUElimit}.

Similar comments apply to Examples~\ref{ex-CE} and~\ref{ex-Viana}
with statistical stability following from~\cite{FreitasTodd09} and~\cite{AlvesViana02} respectively (see the corresponding examples in~\cite{KKMsub} for details).
\end{examps}

\begin{rmk} \label{rmk-Sigma}
Various conditions --- namely independence of $M$ and $\rho$ on $\eps$, $\lim_{\eps\to0}|v_\eps-v_0|_\infty=0$, and conditions~\eqref{eq-tech} --- are used only in the 
proof of continuity of certain covariance matrices $\Sigma_\eps$, see Proposition~\ref{prop-Sigma}.
It is easy to check that the results in this section go through 
with these assumptions removed, provided $\diam M$ is bounded independent of $\eps$ and the conclusion of Proposition~\ref{prop-Sigma} holds.
We note that~\cite{DemersZhang13} gives general conditions under which $\Sigma_\eps$ varies continuously.
\end{rmk}

In the remainder of this subsection, we prove Theorem~\ref{thm-NUE}.
By Theorem~\ref{thm-map}, it suffices to verify (UME) and (WIP) and to identify $P$ and $\Sigma$.

\begin{prop} \label{prop-UME}
Condition (UME) is satisfied
with $P(x)$ as stated in Theorem~\ref{thm-NUE}.
\end{prop}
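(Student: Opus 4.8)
The plan is to apply Remark~\ref{rmk-UME}. Since $\mu_0$ is statistically stable by hypothesis, part~(a) of that remark holds, so it suffices to verify part~(b): that for each fixed $x\in\R^d$,
\begin{equation*}
\lim_{\eps\to0}\int_M\Big|\eps^{1/2}\sum_{j=0}^{[\eps^{-1/2}]-1}\alpha_x\circ T_\eps^j-\int_M\alpha_x\,d\mu_\eps\Big|\,d\mu_\eps=0,
\end{equation*}
where $\alpha_x$ is given by~\eqref{eq-alphax}. Remark~\ref{rmk-UME} then delivers (UME) with $P(x)=\int_M\alpha_x\,d\mu_0$, which unwinds to the function $P$ in the statement of Theorem~\ref{thm-NUE}. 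Throughout write $N_\eps=[\eps^{-1/2}]$ and note $\eps^{1/2}N_\eps\to1$, so $\eps^{1/2}N_\eps\le2$ once $\eps$ is small.

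Fix $x$. The point is that, although $\alpha_x$ is bounded and continuous on $M$, it need not be H\"older, since $a_0(x,\cdot)$ is only assumed continuous; hence the moment estimates cannot be applied to $\alpha_x$ directly. Instead, given $\delta>0$ I would choose $\phi\in C^\eta(M)$ with $|\alpha_x-\phi|_\infty<\delta$ (possible because $M$ is a bounded metric space, and compact in all the examples of Section~\ref{sec-NUElimit}, whence H\"older functions are sup-norm dense in $C(M)$), and decompose the mean-zero observable $\alpha_x-\int_M\alpha_x\,d\mu_\eps=g_\eps+h_\eps$ with $g_\eps=\phi-\int_M\phi\,d\mu_\eps$ and $h_\eps=(\alpha_x-\phi)-\int_M(\alpha_x-\phi)\,d\mu_\eps$, both mean zero with respect to $\mu_\eps$. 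For the H\"older part, $\|g_\eps\|_\eta\le2\|\phi\|_\eta$ uniformly in $\eps$, so Lemma~\ref{lem-moment} (applicable since $T_\eps$ is a uniform family of order $p\ge2$, whose moment constant $C$ is therefore independent of $\eps$) gives
\begin{equation*}
\Big|\eps^{1/2}\sum_{j=0}^{N_\eps-1}g_\eps\circ T_\eps^j\Big|_{L^1(\mu_\eps)}\le\Big|\eps^{1/2}\sum_{j=0}^{N_\eps-1}g_\eps\circ T_\eps^j\Big|_{L^2(\mu_\eps)}\le C\|g_\eps\|_\eta\,\eps^{1/2}N_\eps^{1/2}\le 2C\|\phi\|_\eta\,\eps^{1/4}\to0.
\end{equation*}
For the remaining part, $|h_\eps|_\infty\le2\delta$, hence $\big|\eps^{1/2}\sum_{j=0}^{N_\eps-1}h_\eps\circ T_\eps^j\big|\le2\delta\,\eps^{1/2}N_\eps\le4\delta$ pointwise for $\eps$ small.

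Adding the two estimates yields $\limsup_{\eps\to0}\int_M\big|\eps^{1/2}\sum_{j=0}^{N_\eps-1}\alpha_x\circ T_\eps^j-\int_M\alpha_x\,d\mu_\eps\big|\,d\mu_\eps\le4\delta$, and since $\delta>0$ was arbitrary this establishes part~(b) and hence (UME). The only genuine subtlety is the non-H\"older regularity of $a_0(x,\cdot)$, dealt with by the approximation above; the essential quantitative ingredient, namely the $O(n^{1/2})$ moment bound with a constant uniform over the family $\{T_\eps\}$, is exactly what Lemma~\ref{lem-moment} provides.
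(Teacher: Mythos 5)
Your proof is correct and its quantitative core is exactly the paper's: verify condition~(b) of Remark~\ref{rmk-UME} (condition~(a) being the assumed statistical stability) by applying the uniform moment bound of Lemma~\ref{lem-moment} to a mean-zero observable, with a block of length $[\eps^{-1/2}]$, so that the $L^2\subset L^1$ estimate is $O(\eps^{1/2}\cdot\eps^{-1/4})=O(\eps^{1/4})\to0$. Where you differ is in how $\alpha_x$ is fed into that lemma. The paper simply sets $\beta_{x,\eps}=\alpha_x-\int_M\alpha_x\,d\mu_\eps$ and asserts that this is a uniformly H\"older family with $\|\beta_{x,\eps}\|_\eta\ll\|v_0\|_\eta^2$, which tacitly treats $a_0(x,\cdot)$ as H\"older in $y$ even though the stated regularity assumptions of Section~\ref{sec-homog} only give continuity in $y$ (the quadratic term in $v_0$ is genuinely H\"older with norm $\ll\|v_0\|_\eta^2$). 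You noticed this and interposed a sup-norm approximation of $\alpha_x$ by a H\"older function, splitting off a remainder of sup-norm at most $2\delta$ that is handled trivially; this buys a proof under weaker regularity of $a_0$ and, in effect, patches the imprecision. The price is that your approximation step needs $\alpha_x$ to be uniformly continuous: boundedness of $M$ alone does \emph{not} make H\"older (equivalently Lipschitz) functions sup-norm dense in $C_b(M)$ -- a uniform limit of Lipschitz functions is uniformly continuous -- so your parenthetical should rest on compactness of $M$ (true in all the examples, as you say) or on uniform continuity of $a_0(x,\cdot)$, e.g.\ via the inf-convolutions $y\mapsto\inf_z\{\alpha_x(z)+n\,d_M(y,z)\}$. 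With that caveat made explicit, your argument is complete and strictly more robust than the paper's one-line verification; otherwise the two proofs coincide, including the identification of $P(x)=\int_M\alpha_x\,d\mu_0$ via Remark~\ref{rmk-UME}.
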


\begin{proof}
We apply Remark~\ref{rmk-UME}.
Condition~(a) is automatic, so it remains to verify condition~(b).

Recall from~\eqref{eq-alphax} that
$\alpha_x(y)=a_0(x,y)-\frac12\{(db_0)_xb_0(x)v_0(y)\}v_0(y)$.
Define $\beta_{x,\eps}=\alpha_x-\int_M\alpha_x\,d\mu_\eps$.
Then $\beta_{x,\eps}:M\to\R^d$ is family of H\"older observables with
$\int_M \beta_{x,\eps}\,d\mu_\eps=0$ such that
$\|\beta_{x,\eps}\|_\eta\ll \|v_0\|_\eta^2$ uniformly in $\eps$.
(The estimate is also uniform in $x$, but that is not needed.)
By Lemma~\ref{lem-moment},
$\lim_{\eps\to0}\int_M|\sum_{j=0}^{[\eps^{-1/2}]-1} \beta_{x,\eps}\circ T_\eps^j|\,d\mu_\eps=0$ for all $x\in\R^d$
as required.~
\end{proof}

As in Section~\ref{sec-NUElimit}, we can define
the family of covariance matrices 
\begin{align*}  
\SMALL \Sigma_\eps
 =\lim_{n\to\infty}n^{-1}\int_M S_nv_\eps\, S_nv_\eps^T\,d\mu_\eps, \quad
S_nv_\eps = \sum_{j=0}^{n-1}v_\eps\circ T_\eps^j, \quad\eps\in[0,\eps_0).
\end{align*}

\begin{prop} \label{prop-Sigma}
$\lim_{\eps\to0}\Sigma_\eps=\Sigma_0$.
\end{prop}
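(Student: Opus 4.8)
The plan is to reduce the statement to the convergence of a \emph{finite} sum and then to treat that sum term by term, using only the standing hypotheses. First I would invoke Remark~\ref{rmk-unifSigma}: the convergence $n^{-1}\int_M S_nv_\eps\,S_nv_\eps^T\,d\mu_\eps\to\Sigma_\eps$ as $n\to\infty$ is uniform in $\eps\in[0,\eps_0)$. Hence, given $\delta>0$, one may fix a single $n$ so that $\big|\Sigma_\eps-n^{-1}\int_M S_nv_\eps\,S_nv_\eps^T\,d\mu_\eps\big|<\delta$ for every $\eps\in[0,\eps_0)$ (in particular for $\eps=0$), and then the triangle inequality reduces the claim to showing that, for this fixed $n$,
\[
\int_M S_nv_\eps\,S_nv_\eps^T\,d\mu_\eps\;\longrightarrow\;\int_M S_nv_0\,S_nv_0^T\,d\mu_0\qquad\text{as }\eps\to0 .
\]

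Since $S_nv_\eps\,S_nv_\eps^T=\sum_{j,k=0}^{n-1}(v_\eps\circ T_\eps^j)(v_\eps\circ T_\eps^k)^T$ is a finite sum, it suffices to prove, for each fixed $j,k\ge0$, that $\int_M(v_\eps\circ T_\eps^j)(v_\eps\circ T_\eps^k)^T\,d\mu_\eps\to\int_M(v_0\circ T_0^j)(v_0\circ T_0^k)^T\,d\mu_0$. I would do this by three successive replacements. \textbf{(1)} Replace $v_\eps$ by $v_0$: the integrand is perturbed by at most $(|v_\eps|_\infty+|v_0|_\infty)\,|v_\eps-v_0|_\infty$, which tends to $0$ by~\eqref{eq-v} (using $\sup_\eps\|v_\eps\|_\eta<\infty$). \textbf{(2)} Replace $T_\eps$ by $T_0$ inside the observables: the integrand is perturbed by at most $|v_0|_\infty\big(|v_0\circ T_\eps^j-v_0\circ T_0^j|+|v_0\circ T_\eps^k-v_0\circ T_0^k|\big)$, and since $v_0$ is $\eta$-H\"older,
\[
\int_M|v_0\circ T_\eps^j-v_0\circ T_0^j|\,d\mu_\eps\le|v_0|_\eta\int_M d_M(T_\eps^jy,T_0^jy)^\eta\,d\mu_\eps(y),
\]
whose right-hand side is at most $a^\eta+(\diam M)^\eta\,\mu_\eps\{y:d_M(T_\eps^jy,T_0^jy)>a\}$ for every $a>0$; the last probability tends to $0$ by the second part of~\eqref{eq-tech}, so letting $a\to0$ gives convergence to $0$. \textbf{(3)} Replace $\mu_\eps$ by $\mu_0$: this is precisely the first part of~\eqref{eq-tech}.

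The only delicate point is step~(2): because both the integrand and the integrating measure $\mu_\eps$ vary with $\eps$ and $M$ is merely bounded (not compact), dominated convergence is not directly available, so one argues with the crude truncation bound displayed above, exploiting that $d_M(T_\eps^jy,T_0^jy)\le\diam M$ together with convergence in $\mu_\eps$-probability. Everything else follows immediately from~\eqref{eq-v}, \eqref{eq-tech} and the uniform convergence in Remark~\ref{rmk-unifSigma}, so no new estimates are required.
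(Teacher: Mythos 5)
Your proposal is correct and follows essentially the same route as the paper: use the uniformity in Remark~\ref{rmk-unifSigma} to reduce to a fixed finite time $N$, then compare $\int_M S_Nv_\eps\,S_Nv_\eps^T\,d\mu_\eps$ with $\int_M S_Nv_0\,S_Nv_0^T\,d\mu_0$ by successively changing the observable, the map, and the measure via~\eqref{eq-v} and~\eqref{eq-tech}. The only cosmetic difference is that you expand the product into individual correlation terms and use sup-norm bounds with an explicit truncation in the $T_\eps^j\to_{\mu_\eps}T_0^j$ step, whereas the paper keeps the Birkhoff sums intact and uses Cauchy--Schwarz in $L^2(\mu_\eps)$; both are valid.
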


\begin{proof}
  Write $I_{\eps,n}=\int_M S_nv_\eps\, S_nv_\eps^T \, d\mu_\eps$.

Let $\delta>0$.  
  By Remark~\ref{rmk-unifSigma}, there exists $N\ge1$ such that
      $|N^{-1}I_{\eps,N}-\Sigma_\eps |<\delta$ for all $\eps\in[0,\eps_0)$.
Next
  \[ \SMALL
    I_{\eps,N}-I_{0,N}
     = \int_M ( S_Nv_\eps\, S_Nv_\eps^T - S_Nv_0\, S_Nv_0^T) \, d \mu_\eps
    + \int_M S_Nv_0\, S_Nv_0^T \, (d\mu_\eps - d\mu_0).
  \]
  By condition~\eqref{eq-tech},
  \(\lim_{\eps \to 0} \int_M S_Nv_0\, S_Nv_0^T \, (d\mu_\eps - d\mu_0) = 0\).
Also,
\begin{align*}
| S_Nv_\eps\, S_Nv_\eps^T - S_Nv_0\, S_Nv_0^T|_{L^1(\mu_\eps)} & 
\le \big(|S_Nv_\eps|_{L^2(\mu_\eps)}+|S_Nv_0|_{L^2(\mu_\eps)}\big)
|S_Nv_\eps-S_Nv_0|_{L^2(\mu_\eps)}
\\ & \le N(|v_\eps|_\infty+|v_0|_\infty)|S_Nv_\eps-S_Nv_0|_{L^2(\mu_\eps)}.
\end{align*}
But
\[ \SMALL
|S_Nv_\eps-S_Nv_0|  \le \sum_{j=0}^{N-1}|v_\eps-v_0|\circ T_\eps^j+
\sum_{j=0}^{N-1}|v_0\circ T_\eps^j-v_0\circ T_0^j|
 \le N|v_\eps-v_0|_\infty +|v_0|_\eta g_{\eps,N},
\]
where $g_{\eps,N}(y)=\sum_{j=0}^{N-1}d_M(T_\eps^jy,T_0^jy)^\eta$.
By~\eqref{eq-v} and condition~\eqref{eq-tech}, we obtain that
$\lim_{\eps\to0}| S_Nv_\eps\, S_Nv_\eps^T - S_Nv_0\, S_Nv_0^T|_{L^1(\mu_\eps)}=0$.
Hence $\lim_{\eps\to0}I_{\eps,N}=I_{0,N}$ and so
$\limsup_{\eps\to0}|\Sigma_\eps-\Sigma_0|<2\delta$.
  Since $\delta$  is arbitrary, the result follows.
\end{proof}

\begin{cor}   Condition (WIP) holds with $\Sigma$
as stated in Theorem~\ref{thm-NUE}.
\end{cor}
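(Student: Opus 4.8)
The plan is to identify $W_\eps$, up to a harmless reparametrisation of the step index, with the rescaled Birkhoff-sum process studied in Section~\ref{sec-NUElimit}, and to apply the weak invariance principle established there. Since weak convergence in $D([0,\infty),\R^d)$ as $\eps\to0$ is equivalent to convergence along every sequence $\eps_k\to0$, fix such a sequence and set $n_k=[\eps_k^{-2}]\to\infty$. The maps $T_{\eps_k}$ together with the observables $v_{\eps_k}$ form a uniform family of nonuniformly expanding maps of order $p\ge2$ with $\sup_k\|v_{\eps_k}\|_\eta<\infty$, these properties being inherited from the standing hypotheses on $T_\eps$ and $v_\eps$ in this subsection; and by Proposition~\ref{prop-Sigma} the associated covariance matrices satisfy $\Sigma_{\eps_k}\to\Sigma_0$.

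First I would show that the integer-scaled process $t\mapsto n_k^{-1/2}\sum_{j=0}^{[n_k t]-1}v_{\eps_k}\circ T_{\eps_k}^j$ converges weakly with respect to $\mu_{\eps_k}$ to Brownian motion of covariance $\Sigma_0$. This is the assertion of Proposition~\ref{prop-limit} applied to the family $(T_{\eps_k},v_{\eps_k})$, except that the time scaling is $n_k$ rather than the running index $k$. The proof of Proposition~\ref{prop-limit} applies verbatim in this situation, since none of its ingredients depends on the particular scaling sequence being used: the primary and secondary martingale--coboundary decompositions~\eqref{eq-prim} and~\eqref{eq-sec} together with the estimates of Propositions~\ref{prop-mr} and~\ref{prop-chi} and Corollary~\ref{cor-second} are uniform in $k$ and hold for every integer, the martingale WIP (Theorem~\ref{thm-mda}) is insensitive to the rescaling, the convergence $\Sigma_{\eps_k}\to\Sigma_0$ supplies the limit of the quadratic variations, and $n_k^{-1/2}\big|\max_{1\le j\le n_kT}|\chi_{\eps_k}\circ f_{\eps_k}^j-\chi_{\eps_k}|\big|_2\to0$ for each $T>0$ by the second estimate of Proposition~\ref{prop-chi} (taking $q>2$) together with the uniform integrability of $\{\tau_{\eps_k}^2\}$. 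Pushing down along the measure-preserving semiconjugacy $\pi_{\Delta_{\eps_k}}$ gives the claimed weak limit for the integer-scaled process.

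Finally I would pass back to $W_{\eps_k}(t)=\eps_k\sum_{j=0}^{[t\eps_k^{-2}]-1}v_{\eps_k}\circ T_{\eps_k}^j$. Since $[t\eps_k^{-2}]$ and $[t n_k]$ differ by at most $t+1$ indices and $\eps_k=n_k^{-1/2}(n_k\eps_k^2)^{1/2}$ with $n_k\eps_k^2\to1$, a routine estimate using $\sup_k|v_{\eps_k}|_\infty<\infty$ together with the moment bound of Lemma~\ref{lem-moment} gives $\sup_{t\in[0,T]}\big|W_{\eps_k}(t)-n_k^{-1/2}\sum_{j=0}^{[t n_k]-1}v_{\eps_k}\circ T_{\eps_k}^j\big|\to0$ in probability for every $T>0$; hence $W_{\eps_k}\to_{\mu_{\eps_k}}W$, where $W$ is Brownian motion of covariance $\Sigma_0$. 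By its definition $\Sigma_0=\lim_{n\to\infty}n^{-1}\int_M S_nv_0\,(S_nv_0)^T\,d\mu_0$, which is the matrix $\Sigma$ in the statement of Theorem~\ref{thm-NUE}, so (WIP) holds as claimed. The only mildly delicate point is the observation that the machinery of Sections~\ref{sec-NUE} and~\ref{sec-NUElimit} is genuinely independent of the choice of scaling sequence; the remaining analytic content was already supplied by Proposition~\ref{prop-Sigma} (and, for (UME), Proposition~\ref{prop-UME}).
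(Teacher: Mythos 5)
Your argument is correct and is essentially the paper's proof, which deduces (WIP) directly from Propositions~\ref{prop-limit} and~\ref{prop-Sigma}. The extra care you take in passing between the integer time scale $n_k=[\eps_k^{-2}]$ and the process $W_{\eps_k}$ just fills in a routine reindexing step that the paper treats as immediate.
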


\begin{proof}
This follows immediately from Propositions~\ref{prop-limit} and~\ref{prop-Sigma}.
\end{proof}

\subsection{Nonuniformly hyperbolic fast dynamics}
\label{sec-homog-NUH}

Now we show how to extend Theorem~\ref{thm-NUE} to the invertible setting.
We assume the same set up as in Subsection~\ref{sec-homog-NUE} except that
$T_\eps$ is now a uniform family of nonuniformly hyperbolic transformations 
as in Subsection~\ref{sec-NUH} 

\begin{thm} \label{thm-NUHhomog}
The conclusion $\hat x_\eps\to_{\mu_\eps}X$ of Theorem~\ref{thm-map} remains valid.
If in addition $\mu_0$ is strongly statistically stable then
$\hat x_\eps\to_{\mu_0}X$.
\end{thm}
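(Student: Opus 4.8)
The plan is to verify the two dynamical hypotheses (UME) and (WIP) of Theorem~\ref{thm-map} for the nonuniformly hyperbolic family $T_\eps$ and then invoke Theorem~\ref{thm-map}; the strong-stability addendum then follows from Remark~\ref{rmk-map}(a), exactly as in Remark~\ref{rmk-sss}. Condition (UME) is obtained by the argument of Proposition~\ref{prop-UME}, with the moment estimate Lemma~\ref{lem-moment} replaced by its invertible counterpart Proposition~\ref{prop-NUHmoment}. Concretely, put $\beta_{x,\eps}=\alpha_x-\int_M\alpha_x\,d\mu_\eps$; this is a family of H\"older observables with $\|\beta_{x,\eps}\|_\eta\ll\|v_0\|_\eta^2$ uniformly in $\eps$, so Proposition~\ref{prop-NUHmoment} (with $p\ge2$, hence exponent $\tfrac12$) gives $\eps^{1/2}\int_M\big|\sum_{j=0}^{[\eps^{-1/2}]-1}\beta_{x,\eps}\circ T_\eps^j\big|\,d\mu_\eps\ll\eps^{1/2}\|v_0\|_\eta^2[\eps^{-1/2}]^{1/2}\to0$; since $\mu_0$ is statistically stable, Remark~\ref{rmk-UME} yields (UME) with $P$ the function displayed in Theorem~\ref{thm-NUE}.

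For (WIP), observe that $W_\eps(t)=\eps\sum_{j=0}^{[t\eps^{-2}]-1}v_\eps\circ T_\eps^j$ is precisely the process $W_n$ of Theorem~\ref{thm-NUH} under the reparametrisation $n=\eps^{-2}$ (so $\eps=n^{-1/2}$), and the standing hypotheses guarantee that $T_\eps$ is a uniform family of nonuniformly hyperbolic transformations of order $p\ge2$ with $\sup_\eps\|v_\eps\|_\eta<\infty$. Thus Theorem~\ref{thm-NUH} applies and delivers $W_\eps\to_{\mu_\eps}W$, with $W$ Brownian of covariance $\Sigma_0$, \emph{provided} $\lim_{\eps\to0}\Sigma_\eps=\Sigma_0$, where $\Sigma_\eps$ is the covariance matrix furnished by Corollary~\ref{cor-UMESigma}(b). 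So the substance of the proof is the invertible analogue of Proposition~\ref{prop-Sigma}.

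That analogue goes through essentially verbatim. The hypotheses~\eqref{eq-v} and~\eqref{eq-tech} are stated at the level of $T_\eps$ and $v_\eps$ on $M$, so the two limits used in the proof of Proposition~\ref{prop-Sigma} --- $\lim_{\eps\to0}\big|\int_M S_Nv_0\,S_Nv_0^T\,(d\mu_\eps-d\mu_0)\big|=0$ and $\lim_{\eps\to0}|S_Nv_\eps-S_Nv_0|_{L^2(\mu_\eps)}=0$ --- are unaffected. The only ingredient that needs re-checking is the uniform-in-$\eps$ convergence of $n^{-1}\int_M S_nv_\eps\,S_nv_\eps^T\,d\mu_\eps$ to $\Sigma_\eps$ (the substitute for Remark~\ref{rmk-unifSigma}); this is where the quotienting machinery of Section~\ref{sec-NUHlimit} enters. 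Fix a single $\theta\in[\gamma^{\eta/2},1)$, legitimate because $\gamma,\eta$ are uniform over the family. By Propositions~\ref{prop-bar} and~\ref{prop-vv}, $\bar f_\eps:\bar\Delta_\eps\to\bar\Delta_\eps$ is a uniform family of nonuniformly expanding maps of order $p$ carrying observables $\bar v_\eps$ with $\|\bar v_\eps\|_{\theta,\eps}\ll\|v_\eps\|_\eta$ uniformly, so Remark~\ref{rmk-unifSigma} gives uniform convergence of $n^{-1}\int_{\bar\Delta_\eps}S_n\bar v_\eps\,S_n\bar v_\eps^T\,d\bar\mu_{\Delta_\eps}$ to $\Sigma_\eps$; and the estimate in the proof of Corollary~\ref{cor-UMESigma}(b) bounds the difference of this quantity from $n^{-1}\int_M S_nv_\eps\,S_nv_\eps^T\,d\mu_\eps$ by $\ll n^{-1/2}\|v_\eps\|_\eta^2$, again uniformly in $\eps$. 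Combining these two facts gives the required uniform convergence, after which the proof of Proposition~\ref{prop-Sigma} applies word for word to yield $\Sigma_\eps\to\Sigma_0$, hence (WIP). With (UME) and (WIP) established, Theorem~\ref{thm-map} gives $\hatx\to_{\mu_\eps}X$, and Remark~\ref{rmk-map}(a) upgrades this to $\hatx\to_{\mu_0}X$ when $\mu_0$ is strongly statistically stable. The main obstacle is thus not a new estimate but the bookkeeping in this last paragraph: checking that every constant which was uniform for the family $T_\eps$ on $M$ remains uniform after passage to the quotient tower family $\bar f_\eps$ and its observables $\bar v_\eps$, so that Remark~\ref{rmk-unifSigma} is genuinely applicable.
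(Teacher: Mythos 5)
Your proposal is correct and follows essentially the same route as the paper: verify (UME) via the argument of Proposition~\ref{prop-UME} with the moment bound replaced by its invertible version (Proposition~\ref{prop-NUHmoment}, i.e.\ Corollary~\ref{cor-UMESigma}(a)), and verify (WIP) by running the proof of Proposition~\ref{prop-Sigma} for the covariances of Corollary~\ref{cor-UMESigma}(b), using Remark~\ref{rmk-unifSigma} on the quotient family together with the $O(n^{-1/2})$ comparison from the proof of Corollary~\ref{cor-UMESigma}(b) to get uniform-in-$\eps$ convergence, then invoking Theorem~\ref{thm-NUH} and finally Theorem~\ref{thm-map} and Remark~\ref{rmk-map}(a). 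Your write-up simply makes explicit the uniformity bookkeeping that the paper states in one line.
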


\begin{rmk} \label{rmk-Sigma2}  
The comments in Remark~\ref{rmk-Sigma} apply equally in the current context.
\end{rmk}

To prove Theorem~\ref{thm-NUHhomog}, it again suffices to
verify condition (UME) and (WIP)  in Theorem~\ref{thm-map}.

The proof of
(UME) is identical to that of Proposition~\ref{prop-UME} with Corollary~\ref{cor-UMESigma}(a) replacing Lemma~\ref{lem-moment}.
By Corollary~\ref{cor-UMESigma}(b), we can define
\begin{align*}
\Sigma_\eps
& \SMALL =\lim_{n\to\infty}\frac1n\int_M 
\big(\sum_{j=0}^{n-1}v_\eps\circ T_\eps^j\big)
\big(\sum_{j=0}^{n-1}v_\eps\circ T_\eps^j\big)^T\,d\mu_\eps.
\end{align*}
By Remark~\ref{rmk-unifSigma} and the proof of Corollary~\ref{cor-UMESigma}(b), 
the convergence is uniform in~$\eps$.  Hence 
the argument in the proof of Proposition~\ref{prop-Sigma} shows that
$\lim_{\eps\to0}\Sigma_\eps=\Sigma_0$.
Condition (WIP) with $\Sigma=\Sigma_0$ follows from Theorem~\ref{thm-NUH}.

\begin{examp}
By~\cite{DemersZhang13}, statistical stability holds for the families of externally forced dispersing billiards in Example~\ref{ex-billiard},
and hence Theorem~\ref{thm-NUHhomog} holds.  

We note that the stronger linear response property can be established in certain situations~\cite{ChernovKorepanov14}, but that linear response is not required for the purposes of this paper.
\end{examp}

\appendix

\section{WIP for martingale difference arrays}
\label{sec-mda}

In this appendix, we recast a classical martingale CLT/WIP of~\cite{Billingsley99} into a form that is convenient  for ergodic stationary martingale difference arrays of the type commonly encountered in the deterministic setting.  

Let $\{(\Delta_n,\cM_n,\mu_n)\}$ be a sequence of probability spaces.
Suppose that $f_n:\Delta_n\to\Delta_n$ is a sequence of measure-preserving transformations with transfer operators $L_n$
and Koopman operators $U_n$.
Suppose that $m_n:\Delta_n\to\R^d$ lies in $L^2(\Delta_n)$ and that
$\int_{\Delta_n}m_n\,d\mu_n=0$ and $m_n\in\ker L_n$.

Define the sequence of processes $M_n:\Delta_n\to D([0,\infty),\R^d)$
by
\[
M_n(t)=n^{-1/2}\sum_{j=0}^{[nt]-1}m_n\circ f_n^j, \quad t\ge0.
\]

\begin{thm} \label{thm-mda}
Suppose that the family $\{|m_n|^2,\,n\ge0\}$ is uniformly integrable.
Suppose also that
there exists a constant matrix 
$\Sigma\in\R^{d\times d}$, such that
$n^{-1}\sum_{j=0}^{[nt]-1}\{U_nL_n(m_nm_n^T)\}\circ f_n^j\to_{\mu_n} t\Sigma$
as $n\to\infty$ for each $t>0$.

Then $M_n\to_{\mu_n} W$ in $D([0,\infty),\R^d)$ where $W$ is Brownian motion with covariance~$\Sigma$.
\end{thm}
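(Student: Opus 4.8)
The plan is to deduce Theorem~\ref{thm-mda} from the classical functional CLT for (forward) martingale difference arrays, using the time-reversal already exploited in Proposition~\ref{prop-mart}, after two routine reductions. First, it suffices to prove $M_n\to_{\mu_n}W$ in $D([0,T],\R^d)$ for each fixed $T\ge1$. Second, one can reduce to the scalar case $d=1$: for $c\in\R^d$ the observables $\langle c,m_n\rangle$ satisfy the same hypotheses with $\Sigma$ replaced by $c^T\Sigma c$ (indeed $\E(\langle c,m_n\rangle^2\mid f_n^{-1}\cM_n)=c^T(U_nL_n(m_nm_n^T))c$, and $\{|\langle c,m_n\rangle|^2\}$ inherits uniform integrability from $\{|m_n|^2\}$); the scalar conclusions for all $c$ give convergence of the finite-dimensional distributions of $M_n$ by Cram\'er--Wold, while tightness of the $\R^d$-valued process reduces to tightness of its scalar components.

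Fixing $T$ and assuming $d=1$ (write $\sigma^2=\Sigma$), the plan is, for each $n$, to set $N=N_n=[nT]$ and form the array $X_{n,j}=m_n\circ f_n^{\,N-j}$ with $\cG_{n,j}=f_n^{-(N-j)}\cM_n$, $1\le j\le N$; exactly as in Proposition~\ref{prop-mart} (via $m_n\in\ker L_n$ and $U_nL_n=\E(\,\cdot\mid f_n^{-1}\cM_n)$) this is a martingale difference array. The two inputs for the FCLT are checked as follows. The Lindeberg condition $n^{-1}\sum_{j=1}^N\E\big(X_{n,j}^2\,1_{\{|X_{n,j}|>\eps\sqrt n\}}\big)=\tfrac Nn\int_{\Delta_n}m_n^2\,1_{\{|m_n|>\eps\sqrt n\}}\,d\mu_n\to0$ follows directly from uniform integrability of $\{|m_n|^2\}$, since $\{|m_n|>\eps\sqrt n\}\subset\{m_n^2>K\}$ once $\eps^2n>K$. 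For the predictable quadratic variation, $\E(X_{n,j}^2\mid\cG_{n,j-1})=(U_nL_nm_n^2)\circ f_n^{\,N-j}$, so with $\Phi_n(k)=\sum_{i=0}^{k-1}(U_nL_nm_n^2)\circ f_n^i$,
\[
\SMALL n^{-1}\sum_{j=1}^{[nt]}\E(X_{n,j}^2\mid\cG_{n,j-1})=n^{-1}\big(\Phi_n(N)-\Phi_n(N-[nt])\big);
\]
since $N-[nt]=[n(T-t)]+O(1)$ and $n^{-1}(U_nL_nm_n^2)\circ f_n^k\to_{\mu_n}0$ (as $|U_nL_nm_n^2|_1\le|m_n|_2^2$ is bounded), the hypothesis $n^{-1}\Phi_n([ns])\to_{\mu_n}s\sigma^2$ at $s=T$ and $s=T-t$ shows this quantity tends to $t\sigma^2$ for each $t\in[0,T]$.

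The classical martingale array FCLT~\cite{Billingsley99} then gives $\widetilde M_n(t):=n^{-1/2}\sum_{j=1}^{[nt]}X_{n,j}\to_{\mu_n}$ Brownian motion of variance $\sigma^2$ in $D([0,T],\R)$. Writing $S_n(k)=\sum_{i=0}^{k-1}m_n\circ f_n^i$, one has $\widetilde M_n(t)=n^{-1/2}(S_n(N)-S_n(N-[nt]))$, so applying the time-reversal map $u\mapsto(t\mapsto u(T)-u(T-t))$ — continuous on $C([0,T],\R)$, where the limit is a.s.\ supported, and sending Brownian motion to Brownian motion — yields that $t\mapsto n^{-1/2}S_n(N-[n(T-t)])$ converges in $D([0,T],\R)$ to a Brownian motion $W$ of variance $\sigma^2$. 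Finally $N-[n(T-t)]\in\{[nt],[nt]+1\}$, so $\sup_{t\in[0,T]}|n^{-1/2}S_n(N-[n(T-t)])-M_n(t)|\le n^{-1/2}\max_{0\le k\le N}|m_n\circ f_n^k|\to_{\mu_n}0$ by the same uniform-integrability estimate used for Lindeberg, whence $M_n\to_{\mu_n}W$ in $D([0,T],\R)$; combined with the first paragraph this proves the theorem. I expect the main obstacle to be the bookkeeping around the time-reversal rather than any single estimate: one must normalise the reversed array's conditional variance by $n$ (not by $N_n\approx nT$) and still land on $t\sigma^2$ uniformly on $[0,T]$, and one must check that un-reversing, together with the $O(1)$ index shifts, reproduces $M_n$ up to an error that is negligible in $\mu_n$-probability. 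The Lindeberg reduction, the Cram\'er--Wold passage to $\R^d$, and the extension from $D([0,T])$ to $D([0,\infty))$ are all routine.
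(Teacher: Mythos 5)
Your scalar mechanism is essentially the paper's: the time-reversed array $X_{n,j}=m_n\circ f_n^{N-j}$ with filtration $\cG_{n,j}=f_n^{-(N-j)}\cM_n$, the Lindeberg condition extracted from uniform integrability of $\{|m_n|^2\}$, the identification $\E(\,\cdot\,|f_n^{-1}\cM_n)=U_nL_n$ for the conditional variances, and Billingsley's martingale CLT/FCLT; packaging the fdd and tightness steps into one application of the functional theorem followed by undoing the reversal with the map $u\mapsto u(T)-u(T-t)$ is a legitimate variant (the index bookkeeping and the $n^{-1/2}\max_k|m_n\circ f_n^k|\to_{\mu_n}0$ estimate are fine). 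The genuine gap is the reduction to $d=1$. For a fixed $c$, your scalar conclusion gives joint convergence of $(c^TM_n(t_1),\dots,c^TM_n(t_k))$, i.e.\ of linear functionals $\sum_\ell a_\ell\,c^TM_n(t_\ell)$ with the \emph{same} direction $c$ at every time. Cram\'er--Wold for the $\R^{dk}$-valued vector $(M_n(t_1),\dots,M_n(t_k))$ requires convergence of $\sum_\ell c_\ell^TM_n(t_\ell)$ for \emph{arbitrary, different} vectors $c_\ell$, and these are not functions of any single scalar process $c^TM_n$; knowing that each one-dimensional projection of a tight sequence converges to a one-dimensional Brownian motion does not, by Cram\'er--Wold or any argument you give, identify the limit of the vector process (joint Gaussianity across times with different directions is exactly what is missing). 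This is precisely why the paper applies the array CLT to $Z_n=\sum_\ell c_\ell^T(M_n(t_\ell)-M_n(t_{\ell-1}))$, i.e.\ to the array $X_{n,j}=n^{-1/2}d_{n,j}^Tm_n\circ f_n^{[nt_k]-j}$ with the coefficient vector $d_{n,j}$ changing from block to block. Your argument is repaired by doing the same for the fdd step (the componentwise reduction for tightness is fine), but as written the multivariate case is not proved.

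A second omission, smaller but real: degenerate variances. You invoke the functional martingale CLT for every direction $c$ and every coordinate, but whenever $\Sigma$ is singular there are directions with $c^T\Sigma c=0$, and if $\Sigma=0$ every direction is degenerate. The theorem in \cite{Billingsley99} is used in the paper only after choosing coordinates so that $\sigma^2>0$ in each coordinate, and the completely degenerate case $\Sigma=0$ is handled there by a separate device (taking the product with a Bernoulli shift to create a nondegenerate direction). In a degenerate direction one can instead argue directly: the hypothesis together with uniform integrability forces $\E(c^Tm_n)^2\to0$ (take expectations in the conditional-variance hypothesis and use uniform integrability to upgrade convergence in probability to convergence of means), and Doob's inequality then shows $n^{-1/2}\max_{k\le N}|\sum_{j\le k}c^Tm_n\circ f_n^{N-j}|\to0$ in $L^2$. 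Some such argument is needed; your proposal contains none, so in particular the case $\Sigma=0$ of the theorem is not covered.
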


\begin{proof}
First we consider the case where $\Sigma$ is not identically zero.
By Prokhorov's Theorem, we must prove convergence of finite-dimensional distributions and tightness.

\noindent{\bf Finite-dimensional distributions}
Fix $0= t_0\le t_1\le\dots\le t_k$, $c_1,\dots,c_k\in\R^d$, $k\ge1$.
Define
\[
\SMALL Z_n=\sum_{\ell=1}^k c_\ell^T(M_n(t_\ell)-M_n(t_{\ell-1})), \quad
Z=\sum_{\ell=1}^k c_\ell^T(W(t_\ell)-W(t_{\ell-1})).
\]
We must show that $Z_n\to_{\mu_n} Z$.

Now $Z=N(0,V)$ where $V=\sum_{\ell=1}^k c_\ell^T\Sigma c_\ell(t_\ell-t_{\ell-1})$.
Also,
\[
Z_n=n^{-1/2}\sum_{\ell=1}^k c_\ell^T \sum_{j=[nt_{\ell-1}]}^{[nt_\ell]-1}m_n\circ f_n^j=\sum_{j=1}^{[nt_k]}X_{n,j},
\]
where $X_{n,j}=n^{-1/2}d_{n,j}^Tm_n\circ f_n^{[nt_k]-j}$ for
appropriate choices of 
$d_{n,j}\in\{c_1,\dots,c_k\}$.

Define $\cG_{n,j}=f_n^{-([nt_k]-j)}\cM_n$ for $1\le j\le [nt_k]$.
The same calculation as in the proof of Proposition~\ref{prop-mart}
shows that 
$\{X_{n,j},\,\cG_{n,j};\,1\le j\le [nt_k]\}$ is a martingale difference array.
That is, 
$\cG_{n,j}\subset \cG_{n,j+1}$ for all $1\le j\le [nt_k]-1$,
$X_{n,j}$ is $\cG_{n,j}$-measurable for all $1\le j\le [nt_k]$,
and
$\E(X_{n,j+1}|\cG_{n,j})=0$ for all $1\le j\le [nt_k]-1$.

We now apply a CLT for martingale difference arrays~\cite[Theorem~18.1]{Billingsley99}.
(See also~\cite[Theorem~2.3]{McLeish74}.)
To show that $Z_n\to_d N(0,V)$ it suffices to show that
\begin{itemize}
\item[(B1)]
$\sum_{j=1}^{[nt_k]}\E(X_{n,j}^2|\cG_{n,j-1})\to_{\mu_n} V$ as $n\to\infty$.
\item[(B2)] $\lim_{n\to\infty}\sum_{j=1}^{[nt_k]}\E(X_{n,j}^21_{\{|X_{n,j}|\ge\eps\}})=0$ for all $\eps>0$.
\end{itemize}

Arguing as in the proof of Proposition~\ref{prop-mart},
\[
\E(X_{n,j}^2|\cG_{n,j-1})=n^{-1}\E((d_{n,j}^Tm_n)^2\circ f^{[nt_k]-j}|\cG_{n,j-1})=n^{-1}\{U_nL_n(d_{n,j}^Tm_n)^2\}\circ f_n^{[nt_k]-j}.
\]
Hence
\begin{align*}
& \sum_{j=1}^{[nt_k]} \E(X_{n,j}^2|\cG_{n,j-1})
= n^{-1}\sum_{j=1}^{[nt_k]} \{U_nL_n(d_{n,j}^Tm_n)^2\}\circ f_n^{[nt_k]-j}
\\ & \qquad 
 = n^{-1}\sum_{\ell=1}^k \sum_{j=[nt_{\ell-1}]}^{[nt_\ell]-1}\{U_nL_nc_\ell^T (m_nm_n^T)c_\ell\}\circ f_n^j
\\ & \qquad = \sum_{\ell=1}^k c_\ell^T\Big(n^{-1}\sum_{j=0}^{[nt_\ell]-1}\{U_nL_n(m_nm_n^T)\}\circ f_n^j-
n^{-1}\sum_{j=0}^{[nt_{\ell-1}]-1}\{U_nL_n(m_nm_n^T)\}\circ f_n^j\Big)c_\ell
\end{align*}
which converges in probability to $V$.  This proves~(B1).

Next, $|X_{n,j}|\le Kn^{-1/2}|m_n\circ f_n^{[nt_k]-j}|$ where $K=\max\{|c_1|,\dots,|c_k|\}$.
Hence $X_{n,j}^21_{\{|X_{n,j}|\ge\eps\}}\le K^2n^{-1}(|m_n|^21_{\{|m_n|\ge\eps'n^{1/2}\}})\circ f_n^{[nt_k]-j}$ where $\eps'=\eps/K$, and
\begin{align*}
\sum_{j=1}^{[nt_k]}\E(X_{n,j}^21_{\{|X_{n,j}|\ge\eps\}}) & \le K^2n^{-1}
\sum_{j=1}^{[nt_k]}\E(|m_n|^21_{\{|m_n|\ge\eps'n^{1/2}\}})
\\ & =K^2n^{-1}[nt_k]\E(|m_n|^21_{\{|m_n|\ge\eps'n^{1/2}\}}),
\end{align*}
which converges to zero by uniform integrability of $\{|m_n|^2\}$.
This proves (B2) and completes the proof that
$Z_n\to_{\mu_n} Z$, showing that finite-dimensional distributions converge.

\vspace{1ex}
\noindent
{\bf Tightness}
Tightness of $\{M_n\}$ in $D([0,\infty),\R^d)$ is equivalent to
tightness of each coordinate of $\{M_n\}$ in $D([0,T],\R)$ for each $T>0$.
Hence we fix $T>0$, and assume
without loss that $M_n$ is $\mathbb{R}$-valued and that $\frac{1}{n}\sum_{k=0}^{[nt]-1}m_n^2\circ f_n^k\to_{\mu_n}t\sigma^2$ for some $\sigma^2\ge0$.
Since $\Sigma$ is nonzero, we can choose coordinates so that $\sigma^2>0$ in each coordinate.
Without loss $\sigma^2=1$.

Since $M_n(0)\equiv 0$, proving tightness of $\{M_n\}$ is equivalent~\cite[Theorem~7.3]{Billingsley99} to showing that 
\begin{align}\label{eq-tight}
\lim_{\delta\to 0}\limsup_{n\to\infty} 
\mu_n\Bigg(\sup_{\overset{0\leq s,t\leq T}{|s-t|\leq \delta}}|M_n(t)-M_n(s)|>\epsilon\Bigg)=0\quad
\text{for every $\eps>0$}.
\end{align}

Define 
$M_n^-(t)=\sum_{j=1}^{[nt]}\xi_{n,j}$ where
$\xi_{n,j}=n^{-1/2}m_n\circ f_n^{[nT]-j}$.
We claim that the hypotheses of~\cite[Theorem~18.2]{Billingsley99} are satisfied and hence in particular that $\{M_n^-\}$ is tight.
It follows that condition~\eqref{eq-tight} is satisfied with $M_n$ replaced by 
$M_n^-$.
But $M_n(t)-M_n(s)=M_n^-(u_{n,s})-M_n^-(u_{n,t})$ where
$u_{n,t}\in[0,T]$ is such that $[nu_{n,t}]=[nT]-[nt]$.
Hence
\[
\sup_{\overset{0\le s,t\le T}{|s-t|\le\delta}}  |M_n(t)-M_n(s)| \le \sup_{\overset{0\le s,t\le T}{|s-t|\le\delta+\frac2n}}|M_n^-(t)-M_n^-(s)|,
\]
and the result follows.

It remains to verify the claim.   Consider the martingale difference array 
$\{X_{n,j},\cG_{n,j};\,1\le j\le [nT]\}$ where
$X_{n,j}=n^{-1/2}m_n\circ f_n^{[nT]-j}$,
$\cG_{n,j}=f_n^{-([nT]-j)}\cM_n$.
By~\cite[Theorem~18.2]{Billingsley99}, it suffices to show that for each $t\in[0,T]$,
\begin{itemize}
\item[(B3)]
$\sum_{j=1}^{[nt]}\E(X_{n,j}^2|\cG_{n,j-1})\to_{\mu_n} t$ as $n\to\infty$.
\item[(B4)] $\lim_{n\to\infty}\sum_{j=1}^{[nt]}\E(X_{n,j}^21_{\{|X_{n,j}|\ge\eps\}})=0$ for all $\eps>0$.
\end{itemize}
These are proved in exactly the same way as conditions~(B1) and~(B2) above.

\vspace{1ex}
\noindent{\bf The completely degenerate case}
When $\Sigma=0$, we consider the direct product of the underlying dynamics with a simple symmetric random walk on the integers.  The product system
leads to a WIP with one nondegenerate direction and the result reduces to the
case $\Sigma\neq0$.

More precisely, let $\Lambda'=\{\pm1\}^{\N}$ with fair ($p=q=1/2$) Bernoulli measure $\mu'$, and consider the one-sided shift
$f:\Lambda'\to\Lambda'$ and observable $m':\Lambda'\to \{\pm1\}$ where
$m'(x)=x_0$.   The process $M_n'(t)=n^{-1/2}\sum_{j=0}^{[nt]-1}m'\circ f'^j$
converges in $D([0,\infty),\R)$ to Brownian motion $W'$ with variance $1$.
Define the family of product systems $\Lambda_n''=\Lambda_n\times\Lambda'$,
$\mu_n''=\mu_n\times\mu'$, 
$f_n''=f_n\times f':\Lambda_n''\to\Lambda_n''$, $m_n''=m_n\oplus m':\Lambda_n''\to\R^{d+1}$,
$M_n''=M_n\oplus M_n'\in D([0,\infty),\R^{d+1})$.
Let $U_n''$ and $L_n''$ denote the Koopman and transfer operators corresponding
to $f_n''$.
An easy calculation shows that
$U''L''(m''m''^T)=UL(mm^T)\oplus 1$ and hence
$\{m_n''\}$ satisfies the hypotheses of the theorem
with $\Sigma''=0\oplus 1$.  Consequently, $M_n''\to_{\mu_n''} W''=0\oplus W'$
in $D([0,\infty),\R^{d+1})$.
In particular, $M_n\to_{\mu_n}0$.
\end{proof}

\paragraph{Acknowledgements}
This research was supported in part by a European Advanced Grant {\em StochExtHomog} (ERC AdG 320977).

\def\polhk#1{\setbox0=\hbox{#1}{\ooalign{\hidewidth
  \lower1.5ex\hbox{`}\hidewidth\crcr\unhbox0}}}

\end{document}